\definecolor{escol}{rgb}{0,0,0.8}
\definecolor{estcol}{rgb}{0.8,0,0}
\definecolor{afcol}{rgb}{1,0,0}
\patchcmd{\quote}{\rightmargin}{\leftmargin 1.0cm \rightmargin}{}{}  
\newtheorem{thm}{Theorem}[section]
\newtheorem{lem}{Lemma}[section]
\newtheorem{defn}{Definition}[section]
\newtheorem{assn}{Assumption}[section]
\newtheorem{remark}{Remark}[section]
\newtheorem{coroll}{Corollary}[section]
\DeclareSymbolFont{pxfontssymbolsC}{U}{pxsyc}{m}{n}
\DeclareMathSymbol{\coloneqq}{\mathrel}{pxfontssymbolsC}{66}
\numberwithin{equation}{section}
\begin{document}

\title
 {Asymptotics to all orders of the Hurwitz zeta function} 


\author[1]{Arran Fernandez \thanks{Email: \texttt{af454@cam.ac.uk}}}
\author[1,2]{Athanassios S. Fokas \thanks{Email: \texttt{T.Fokas@damtp.cam.ac.uk}}}
\affil[1]{{\small Department of Applied Mathematics \& Theoretical Physics, University of Cambridge, Cambridge, UK, CB3 0WA}}
\affil[2]{{\small Viterbi School of Engineering, University of Southern California, Los Angeles, California, 90089 USA}}




\date{}





\maketitle

\vspace{-1cm}

\begin{abstract}\noindent We present several formulae for the large-$t$ asymptotics of the modified Hurwitz zeta function $\zeta_1(x,s),x>0,s=\sigma+it,0<\sigma\leq1,t>0,$ which are valid to all orders. In the case of $x=0$, these formulae reduce to the asymptotic expressions recently obtained for the Riemann zeta function, which include the classical results of Siegel as a particular case.
\end{abstract}









\section{Introduction}
The \textit{Hurwitz zeta function} $\zeta(x,s)$ is a two-variable generalisation of the Riemann zeta function, defined by \[\zeta(x,s)\coloneqq\sum_{n=0}^{\infty}(n+x)^{-s}\;,\;\;\mathrm{Re}(x)>0,s=\sigma+it,\sigma>1,t\in\mathbb{R},\] and defined for all $s\in\mathbb{C}$ by analytic continuation. The \textit{modified Hurwitz zeta function} $\zeta_1(x,s)$ is a variant of the Hurwitz zeta function, defined by \[\zeta_1(x,s)\coloneqq\sum_{n=1}^{\infty}(n+x)^{-s}\;,\;\;\mathrm{Re}(x)>-1,s=\sigma+it,\sigma>1,t\in\mathbb{R},\] and again defined for all $s\in\mathbb{C}$ by analytic continuation. It is clear that these two functions are related by the simple formula \[\zeta(x,s)=\frac{1}{x^s}+\zeta_1(x,s)\;,\;\;\mathrm{Re}(x)>0,s\in\mathbb{C},\] and that for $x=0$ the modified Hurwitz function reduces to the Riemann zeta function: \[\zeta(s)=\zeta_1(x,s).\]

The following asymptotic formula for $\zeta(s)$, proved in e.g. Theorem 4.15 of \cite{titchmarsh}, is known as the \textit{approximate functional equation}:
\begin{equation}
\label{AFE}
\zeta(s)=\sum_{n\leq x}\frac{1}{n^s}+\chi(s)\sum_{n\leq y}\frac{1}{n^{1-s}}+O\Big(x^{-\sigma}+|t|^{\tfrac{1}{2}-\sigma}y^{\sigma-1}\Big),
\end{equation}
where \[xy=\tfrac{t}{2\pi},0<\sigma<1,t\rightarrow\infty,\] and the entire function $\chi(s)$ is defined by
\begin{equation}
\label{chi}
\chi(s)\coloneqq\tfrac{(2\pi)^s}{\pi}\Gamma(1-s)\sin\big(\tfrac{\pi s}{2}\big),\;s\in\mathbb{C}.
\end{equation}
(Throughout this paper, $\Gamma(s)$ denotes the gamma function of a complex variable $s$ and $\lfloor k\rfloor$ denotes the floor function of a real number $k$.)

The analogous formula for the modified Hurwitz function is the following asymptotic expression, proved in e.g. \cite{rane}:
\begin{equation}
\label{AFE-Hurwitz}
\zeta_1(\alpha,s)=\sum_{n=1}^{\lfloor x-\alpha\rfloor}(n+\alpha)^{-s}+\chi(s)\sum_{n=1}^{y}\tfrac{\sin\big(\tfrac{\pi s}{2}+2\pi n\alpha\big)}{\sin\big(\tfrac{\pi s}{2}\big)}n^{s-1}+O\big(x^{-\sigma}\log(y+2)+x^{1-\sigma}t^{-1/2}\big),
\end{equation}
where \[xy=\tfrac{t}{2\pi},0<\sigma<1,0<\alpha\leq1,t\rightarrow\infty.\]

Siegel, in his classical paper \cite{siegel} following Riemann's unpublished notes, found expressions for the error term in (\ref{AFE}) to all orders for the important particular case $x=y=\sqrt{\tfrac{t}{2\pi}}$. In \cite{fokas-lenells}, formulae analogous to those of Siegel were presented for any $x,y$ satisfying $xy=\tfrac{t}{2\pi}$. The starting point of the analysis of \cite{fokas-lenells} was the following exact formula, proved in Theorem 2.1 of \cite{fokas-lenells}:
\begin{equation}
\label{Riemann-starting-point}
\zeta(s)=\chi(s)\bigg[\sum_{n=1}^{\lfloor\eta/2\pi\rfloor}n^{s-1}+\frac{1}{(2\pi)^s}\bigg(-\frac{\eta^s}{s}+e^{i\pi s/2}\int_{-i\eta}^{\infty e^{i\phi_1}}\frac{z^{s-1}}{e^z-1}\,\mathrm{d}z+e^{-i\pi s/2}\int_{i\eta}^{\infty e^{i\phi_2}}\frac{z^{s-1}}{e^z-1}\,\mathrm{d}z\bigg)\bigg],
\end{equation}
valid for \[0<\eta<\infty,-\tfrac{\pi}{2}<\phi_1,\phi_2<\tfrac{\pi}{2},s\in\mathbb{C}.\]

The existence of the additional parameter $x$ occurring in $\zeta_1(x,s)$ leads to interesting results which do not have analogues for $\zeta(s)$; see for example \cite{wang}, \cite{balasubramanian}, \cite{andersson}, \cite{katsurada}, \cite{mezo}, and p. 73 in \cite{davenport}. In this paper, we present analogous results with those of \cite{fokas-lenells} for the modified Hurwitz function, obtaining asymptotics to all orders for the error term in (\ref{AFE-Hurwitz}). Our starting point is the following exact formula, which is proved in section 2:
\begin{multline}
\label{Hurwitz-starting-point}
\zeta_1(x,s)=\chi(s)\Bigg(\sum_{m=1}^{\lfloor\eta/2\pi\rfloor}e^{-2\pi imx}m^{s-1}-\tfrac{e^{-i\pi s/2}}{(2\pi)^s}\int_{\hat{C}^0_{\eta}}\frac{e^{(1+x)z}-e^{-xz}}{1-e^z}z^{s-1}\,\mathrm{d}z \\ +\tfrac{e^{i\pi s/2}}{(2\pi)^s}\int_{-i\eta}^{\infty e^{i\phi_2}}\frac{e^{-(1+x)z}}{1-e^{-z}}z^{s-1}\,\mathrm{d}z+\tfrac{e^{-i\pi s/2}}{(2\pi)^s}\int_{i\eta}^{\infty e^{i\phi_1}}\frac{e^{-(1+x)z}}{1-e^{-z}}z^{s-1}\,\mathrm{d}z,
\end{multline}
valid for \[0<\eta<\infty,-\tfrac{\pi}{2}<\phi_1,\phi_2<\tfrac{\pi}{2},0<\sigma\leq1,0<t<\infty,0<x<\infty.\]  We analyse this using an integration by parts method, as seen in \cite{miller}, and eventually derive an expression for the large-$t$ asymptotics of $\zeta_1(x,s)$ to all orders.

We note the following comparisons between our analysis and the analysis in \cite{fokas-lenells}.
\begin{enumerate}
\item Equation (\ref{Riemann-starting-point}) suggests separate analysis for the cases $t<\eta,t=\eta,t>\eta$. These three cases were indeed analysed separately in \cite{fokas-lenells}, but in our approach, we present a unified treatment. Our analysis requires a certain condition to be placed on $\eta$, but this condition is not very restrictive.
\item The asymptotic estimation of certain integrals appearing in \cite{fokas-lenells} led to their analysis via the stationary point technique. Here, by rewriting such integrals in terms of integrals which can be computed explicitly and integrals which do \textit{not} include stationary points, we have avoided the stationary point analysis.
\item The representations presented in \cite{fokas-lenells} involve a finite series for the case of $\eta<t$ but an infinite series for the case of $\eta\geq t$. Since our approach for all values of $\eta$ is analogous to that used in \cite{fokas-lenells} for the case of $\eta\geq t$, we first derive a representation which involves an infinite series. However, we are then able to replace this infinite series by a finite one, some of whose upper bounds depend on $\eta$. Thus, our final result is analogous to that of \cite{fokas-lenells} in the case of $\eta<t$, since it is a finite series, but it is less uniform in the sense that the length of this finite series depends on $\eta$.
\end{enumerate}

This paper is organised as follows: in section 2 we derive equation (\ref{Hurwitz-starting-point}); in sections 3, 4, and 5 we present the asymptotic analysis to all orders of the first, second, and third integrals in the RHS of (\ref{Hurwitz-starting-point}); and in section 6 we derive the main results. In the last section, we also show that in the case of $x=0$, our results are consistent with the formulae for the Riemann zeta function obtained in \cite{fokas-lenells}.

\section{The modified Hurwitz zeta function}
\begin{defn}
\label{Hankel}
The Hankel contour $H_{\alpha}$ is defined by the following three components:
\begin{align*}
L_1&=\{\alpha e^{i\theta}:\tfrac{\pi}{2}<\theta<\pi\}\cup\{re^{i\pi}:\alpha<r<\infty\}, \\
L_2&=\{re^{-i\pi}:\alpha<r<\infty\}\cup\{\alpha e^{i\theta}:-\pi<\theta<-\tfrac{\pi}{2}\}, \\
L_3&=\{\alpha e^{i\theta}:-\tfrac{\pi}{2}<\theta<\tfrac{\pi}{2}\},
\end{align*}
where $\alpha$ is a constant with $0<\alpha<2\pi$. We define $H_{\alpha}$ to be the union of the $L_j$, as shown in Figure \ref{fig1}.
\end{defn}

\begin{figure}
\centering
\includegraphics[width=0.6\linewidth]{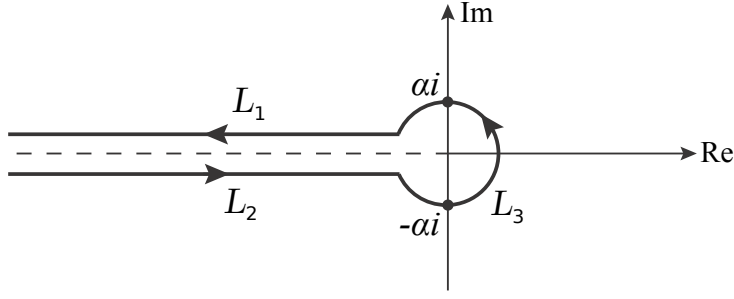}
\caption{The contours $L_1,L_2,L_3$ which together form the Hankel contour $H_{\alpha}$}
\label{fig1}
\end{figure}

\begin{lem}
\label{Hurwitz-AC}
The meromorphic continuation of the modified Hurwitz zeta function to all $s\in\mathbb{C}$ is given by
\begin{equation}
\label{Hurwitz-AC-formula}
\zeta_1(x,s)=\frac{\Gamma(1-s)}{2\pi i}\int_{H_{\alpha}}\frac{e^{xz}z^{s-1}}{e^{-z}-1}\,\mathrm{d}z,
\end{equation}
for $\mathrm{Re}(x)>-1$, where $H_{\alpha}$ is the Hankel contour defined by Definition \ref{Hankel}.
\end{lem}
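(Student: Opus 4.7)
My plan is to derive the formula first in the half-plane $\text{Re}(s)>1$ and then extend it to all $s\in\mathbb{C}$ by analytic continuation. I would begin from the Dirichlet-series definition of $\zeta_1(x,s)$ and apply the classical gamma-function identity $(n+x)^{-s}=\tfrac{1}{\Gamma(s)}\int_0^\infty t^{s-1}e^{-(n+x)t}\,\mathrm{d}t$ termwise. Exchanging sum and integral (legitimate for $\text{Re}(s)>1$ and $\text{Re}(x)>-1$) and summing the resulting geometric series produces the Mellin representation
\[\zeta_1(x,s)=\frac{1}{\Gamma(s)}\int_0^{\infty}\frac{e^{-(x+1)r}\,r^{s-1}}{1-e^{-r}}\,\mathrm{d}r.\]

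Next I would relate this real integral to the contour integral along $H_{\alpha}$. Using the principal branch of $z^{s-1}$ with $\arg z\in(-\pi,\pi)$ places the cut on the negative real axis, so the straight portions of $L_1$ and $L_2$ are its upper and lower edges. The three circular pieces (the arc portions of $L_1$ and $L_2$ together with all of $L_3$) fit together into one complete counterclockwise circle $\Gamma_{\alpha}$ about the origin. The two radial pieces on $\arg z=\pm\pi$, after careful bookkeeping of orientations and of the boundary values $r^{s-1}e^{\pm i\pi(s-1)}$, contribute
\[\int_{L_1^{\mathrm{rad}}}+\int_{L_2^{\mathrm{rad}}}=\bigl(e^{-i\pi(s-1)}-e^{i\pi(s-1)}\bigr)\int_{\alpha}^{\infty}\frac{e^{-xr}\,r^{s-1}}{e^r-1}\,\mathrm{d}r=2i\sin(\pi s)\int_{\alpha}^{\infty}\frac{e^{-(x+1)r}\,r^{s-1}}{1-e^{-r}}\,\mathrm{d}r.\]

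I would then let $\alpha\to0^{+}$. Near the origin the integrand behaves like $-z^{s-2}$, so the contribution of the small circle $\Gamma_{\alpha}$ is $O(\alpha^{\text{Re}(s)-1})$ and vanishes for $\text{Re}(s)>1$. By Cauchy's theorem $\int_{H_{\alpha}}$ is independent of $\alpha\in(0,2\pi)$, since the remaining singularities of the integrand lie at $z=2\pi i n$ with $n\in\mathbb{Z}\setminus\{0\}$ and none is crossed. Combining the surviving radial contribution with the Mellin representation gives
\[\int_{H_{\alpha}}\frac{e^{xz}\,z^{s-1}}{e^{-z}-1}\,\mathrm{d}z=2i\sin(\pi s)\,\Gamma(s)\,\zeta_1(x,s),\]
and the reflection formula $\Gamma(s)\Gamma(1-s)\sin(\pi s)=\pi$ rearranges this into the claimed identity for $\text{Re}(s)>1$.

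To finish, I would observe that the right-hand side of (\ref{Hurwitz-AC-formula}) is meromorphic on all of $\mathbb{C}$: the contour integral is in fact entire, because along the radial pieces the integrand is dominated by $r^{\text{Re}(s)-1}e^{-(\text{Re}(x)+1)r}$, which is integrable on $[\alpha,\infty)$ for every $s$, and uniform convergence on compact sets in $s$ gives holomorphy. Since $\zeta_1(x,s)$ is also meromorphic on $\mathbb{C}$ and the two sides agree on a half-plane, they agree everywhere. The main obstacle in executing this plan is the accurate treatment of the branches and signs coming from $z^{s-1}$ on either side of the cut: the sign identity $-2i\sin(\pi(s-1))=2i\sin(\pi s)$ must be pinned down consistently with the orientation of $H_{\alpha}$. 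Once this bookkeeping is done cleanly, the rest reduces to the standard Mellin--Hankel manipulation familiar from the classical derivation of the Riemann $\zeta(s)$.
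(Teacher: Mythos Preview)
Your proposal is correct and follows essentially the same route as the paper: derive the Mellin integral $\zeta_1(x,s)=\tfrac{1}{\Gamma(s)}\int_0^\infty \tfrac{e^{-xr}r^{s-1}}{e^r-1}\,\mathrm{d}r$ for $\mathrm{Re}(s)>1$, let $\alpha\to0$ so the circular portion of $H_\alpha$ vanishes, compute the two radial contributions via $z=e^{\pm i\pi}r$, and then invoke analytic continuation. The only cosmetic difference is that you bundle all three arc pieces into a single small circle $\Gamma_\alpha$ and note explicitly that $\int_{H_\alpha}$ is $\alpha$-independent by Cauchy, whereas the paper phrases the vanishing in terms of $L_3$ alone (the same estimate disposes of the quarter-arcs in $L_1,L_2$ implicitly).
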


\begin{proof}
For any $n\in\mathbb{N}$ and $s,x\in\mathbb{C}$ with $\mathrm{Re}(s)>1$ and $\mathrm{Re}(x)>-1$, we have \[\frac{1}{(n+x)^s}=\frac{1}{\Gamma(s)}\int_0^{\infty}e^{-(n+x)z}z^{s-1}\,\mathrm{d}z.\] Taking the sum over all $n\in\mathbb{N}$, and using the fact that $\sum_{n=1}^{\infty}e^{-nz}=\tfrac{1}{e^z-1}$ is a locally uniformly convergent series for $\mathrm{Re}(z)>0$, we find
\begin{equation}
\label{Hurwitz-integral}
\zeta_1(x,s)=\frac{1}{\Gamma(s)}\int_0^{\infty}e^{-xz}z^{s-1}\sum_{n=1}^{\infty}e^{-nz}\,\mathrm{d}z=\frac{1}{\Gamma(s)}\int_0^{\infty}\frac{e^{-xz}z^{s-1}}{e^z-1}\,\mathrm{d}z
\end{equation}
for $\mathrm{Re}(s)>1$ and $\mathrm{Re}(x)>-1$. In what follows we shall show that the right-hand sides of equations (\ref{Hurwitz-AC-formula}) and (\ref{Hurwitz-integral}) are identical and that the former is meromorphic for all $s,x\in\mathbb{C}$ with $\mathrm{Re}(x)>-1$; this will suffice to establish the required result.

For $\mathrm{Re}(s)>1$, we can let $\alpha\rightarrow0$ in the Hankel-contour formula, so that the integral around the curved part $L_3$ of the contour can be computed using Cauchy's theorem:
\begin{equation*}
\int_{L_3}\frac{e^{xz}z^{s-1}}{e^{-z}-1}\,\mathrm{d}z=\lim_{\alpha\rightarrow0}\Bigg(\int_{-\pi/2}^{\pi/2}\frac{e^{xz}z^s}{e^{-z}-1}\bigg|_{z=\alpha e^{i\theta}}\,\mathrm{d}\theta\Bigg)=\lim_{\alpha\rightarrow0}\pi\Big(\frac{z^s}{-z}\Big)\bigg|_{z=\alpha e^{i\theta}}=0.
\end{equation*}
Hence, the Hankel contour integral expression yields:
\begin{align*}
\frac{\Gamma(1-s)}{2\pi i}\int_{H_{\alpha}}\frac{e^{xz}z^{s-1}}{e^{-z}-1}\,\mathrm{d}z&=\frac{\Gamma(1-s)}{2\pi i}\bigg(\int_0^{\infty}\frac{e^{-xu}u^{s-1}e^{i\pi s}}{e^{u}-1}\,\mathrm{d}u-\int_0^{\infty}\frac{e^{-xu}u^{s-1}e^{-i\pi s}}{e^{u}-1}\,\mathrm{d}u\bigg) \\
&=\frac{\Gamma(1-s)\sin(\pi s)}{\pi}\int_0^{\infty}\frac{e^{-xu}u^{s-1}}{e^{u}-1}\,\mathrm{d}u \\
&=\frac{1}{\Gamma(s)}\int_0^{\infty}\frac{e^{-xz}z^{s-1}}{e^{z}-1}\,\mathrm{d}z,
\end{align*}
where we have used the substitutions $z=e^{i\pi}u$ along $L_1$ and $z=e^{-i\pi}u$ along $L_2$, and have replaced the dummy variable $u$ by $z$ in the final line.

Thus, we have proved that (\ref{Hurwitz-AC-formula}) holds as an identity for $\mathrm{Re}(s)>1$ and $\mathrm{Re}(x)>-1$. Also, the right-hand side of (\ref{Hurwitz-AC-formula}) is analytic for $\mathrm{Re}(x)>-1$ and all $s\in\mathbb{C}\backslash\mathbb{N}$, since the integrand is finite along the contour and entire in $x$ and $s$.
\end{proof}

\begin{lem}
\label{Hurwitz-alpha}
If $s=\sigma+it$ is a complex variable with $\sigma,t\in\mathbb{R}$ and $\sigma>0$, and $x$ is a real variable with $0<x<\infty$, then the modified Hurwitz zeta function $\zeta_1(x,s)$ can be expressed as
\begin{multline}
\label{Hurwitz-alpha-formula}
\zeta_1(x,s)=\frac{\chi(s)}{(2\pi)^s}\Bigg(\int_0^{\alpha}\frac{e^{i(1+x)u}-e^{-ixu}}{1-e^{iu}}u^{s-1}\,\mathrm{d}u \\
+e^{i\pi s/2}\int_{-i\alpha}^{\infty e^{i\phi_2}}\frac{e^{-(1+x)z}}{1-e^{-z}}z^{s-1}\,\mathrm{d}z+e^{-i\pi s/2}\int_{i\alpha}^{\infty e^{i\phi_1}}\frac{e^{-(1+x)z}}{1-e^{-z}}z^{s-1}\,\mathrm{d}z\Bigg),
\end{multline}
for any given $\alpha,\phi_1,\phi_2$ with $0<\alpha<2\pi$ and $-\tfrac{\pi}{2}<\phi_1,\phi_2<\tfrac{\pi}{2}$.
\end{lem}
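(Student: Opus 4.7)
The plan is to bypass the Hankel contour representation of Lemma \ref{Hurwitz-AC} and instead build (\ref{Hurwitz-alpha-formula}) directly from the elementary integral representation
\[
\Gamma(s)\zeta_1(x,s) = \int_0^{\infty}\frac{e^{-(1+x)z}z^{s-1}}{1-e^{-z}}\,\mathrm{d}z, \qquad \mathrm{Re}(s)>1,\;\mathrm{Re}(x)>-1,
\]
which follows from (\ref{Hurwitz-integral}) on multiplying numerator and denominator of the integrand by $e^{-z}$. I shall establish the identity first in the strip $\mathrm{Re}(s)>1$ and then extend it to $\sigma>0$ by analytic continuation.

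Setting $G(z):=\frac{e^{-(1+x)z}z^{s-1}}{1-e^{-z}}$, the first step is a contour rotation: since $G$ is analytic in the open right half-plane (its only poles, at $z=2\pi in$ with $n\neq0$, lie on the imaginary axis) and since on any arc $\{Re^{i\theta}:|\theta|\leq\phi\}$ with $\phi<\pi/2$ the hypothesis $x>0$ gives the bound $|G|\leq C\,R^{\sigma-1}e^{-(1+x)R\cos\phi}$, Cauchy's theorem yields $\Gamma(s)\zeta_1(x,s)=\int_0^{\infty e^{i\phi}}G(z)\,\mathrm{d}z$ for every $\phi\in(-\tfrac{\pi}{2},\tfrac{\pi}{2})$. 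I would then apply this identity once with $\phi=\phi_1$ and once with $\phi=\phi_2$, split each of the two rotated contours at the intermediate points $i\alpha,-i\alpha$ respectively (both of which lie on the imaginary axis strictly between the origin and the first poles $\pm2\pi i$ by virtue of $0<\alpha<2\pi$), multiply the first resulting identity by $e^{-i\pi s/2}$ and the second by $e^{i\pi s/2}$, and add to obtain
\begin{align*}
2\cos(\tfrac{\pi s}{2})\Gamma(s)\zeta_1(x,s) ={}& \Bigl[e^{i\pi s/2}\!\!\int_0^{-i\alpha}\!\!G\,\mathrm{d}z + e^{-i\pi s/2}\!\!\int_0^{i\alpha}\!\!G\,\mathrm{d}z\Bigr] \\
&{}+ e^{i\pi s/2}\!\!\int_{-i\alpha}^{\infty e^{i\phi_2}}\!\!G\,\mathrm{d}z + e^{-i\pi s/2}\!\!\int_{i\alpha}^{\infty e^{i\phi_1}}\!\!G\,\mathrm{d}z.
\end{align*}

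Parametrising the two segments in the bracket by $z=\pm iu$ with $u\in[0,\alpha]$ and the principal branch $\arg z=\pm\pi/2$, the prefactors $e^{\pm i\pi s/2}$ absorb exactly the branch factors coming from $z^{s-1}$ and $\mathrm{d}z$; the elementary identity $\frac{e^{i(1+x)u}}{1-e^{iu}}+\frac{e^{-i(1+x)u}}{1-e^{-iu}}=\frac{e^{i(1+x)u}-e^{-ixu}}{1-e^{iu}}$ then collapses the bracket into the single real integral $\int_0^{\alpha}\frac{e^{i(1+x)u}-e^{-ixu}}{1-e^{iu}}u^{s-1}\,\mathrm{d}u$, which is the first term of (\ref{Hurwitz-alpha-formula}). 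Dividing through by $2\cos(\tfrac{\pi s}{2})\Gamma(s)$ and invoking the algebraic identity $\frac{\chi(s)}{(2\pi)^s}\cdot 2\cos(\tfrac{\pi s}{2})\Gamma(s)=1$, which is immediate from the definition (\ref{chi}) together with $\Gamma(s)\Gamma(1-s)=\pi/\sin(\pi s)$ and the duplication formula $\sin(\pi s)=2\sin(\tfrac{\pi s}{2})\cos(\tfrac{\pi s}{2})$, produces (\ref{Hurwitz-alpha-formula}) for $\mathrm{Re}(s)>1$; both sides are meromorphic in $s$ throughout $\sigma>0$ and the three integrals on the right converge there, so the identity extends by analytic continuation. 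The main obstacle is the contour rotation in step one: the routine but careful sector estimate justifying the vanishing of the closing circular arcs at infinity, and it is here that the hypothesis $x>0$ is used essentially, since without it the integrand would not decay uniformly as $R\to\infty$ across the sector.
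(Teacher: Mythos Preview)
Your argument is correct and constitutes a genuinely different route from the paper's proof. The paper starts from the Hankel contour representation of Lemma~\ref{Hurwitz-AC}, breaks $H_\alpha$ into the three arcs $L_1,L_2,L_3$, applies the substitutions $z=e^{\pm i\pi}u$ on $L_1,L_2$, splits the integrand on $L_3$ into a singular part (which is rewritten as a difference of two rays) and a regular part (which is collapsed to the segment $[-i\alpha,i\alpha]$), and then reassembles the six resulting pieces. Your approach is more economical: you take the single half-line integral $\Gamma(s)\zeta_1(x,s)=\int_0^\infty G(z)\,\mathrm{d}z$, rotate it twice, and form the linear combination with weights $e^{\mp i\pi s/2}$ so that the two short legs $\int_0^{\pm i\alpha}$ recombine via the algebraic identity $\tfrac{e^{i(1+x)u}}{1-e^{iu}}+\tfrac{e^{-i(1+x)u}}{1-e^{-iu}}=\tfrac{e^{i(1+x)u}-e^{-ixu}}{1-e^{iu}}$; the functional-equation factor $\chi(s)/(2\pi)^s$ then emerges from $1/(2\cos(\tfrac{\pi s}{2})\Gamma(s))$ rather than from the Hankel integral. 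The trade-off is that the Hankel contour argument is valid for all $s$ from the outset, whereas your version requires the extra analytic continuation step from $\sigma>1$ to $\sigma>0$; for the stated range this is harmless. One point of phrasing: when you ``split'' each rotated ray at $\pm i\alpha$ you are really performing a second contour deformation (the points $\pm i\alpha$ do not lie on the rays $e^{i\phi_j}\mathbb{R}^+$), and it would be worth saying explicitly that the path from $\pm i\alpha$ out to $\infty e^{i\phi_j}$ can be taken in the open right half-plane, so that no pole of $(1-e^{-z})^{-1}$ is crossed; your parenthetical remark about $0<\alpha<2\pi$ handles the short leg but not this outgoing one.
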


\begin{proof}
We start with the expression (\ref{Hurwitz-AC-formula}) for $\zeta_1(x,s)$, and split the Hankel contour into the three parts $L_1$, $L_2$, $L_3$ defined in Definition \ref{Hankel}.

Firstly, by using Cauchy's theorem and then substituting $z=e^{i\pi}u,z=e^{-i\pi}u$ respectively, the integrals along $L_1$ and $L_2$ become:
\begin{align*}
\int_{L_1}\frac{e^{xz}z^{s-1}}{e^{-z}-1}\,\mathrm{d}z&=\int_{i\alpha}^{\infty e^{i\pi}}\frac{e^{(1+x)z}z^{s-1}}{1-e^{z}}\,\mathrm{d}z=e^{i\pi s}\int_{-i\alpha}^{\infty}\frac{e^{-(1+x)u}}{1-e^{-u}}u^{s-1}\,\mathrm{d}u; \\
\int_{L_2}\frac{e^{xz}z^{s-1}}{e^{-z}-1}\,\mathrm{d}z&=-\int_{-i\alpha}^{\infty e^{-i\pi}}\frac{e^{(1+x)z}z^{s-1}}{1-e^{z}}\,\mathrm{d}z=-e^{-i\pi s}\int_{i\alpha}^{\infty}\frac{e^{-(1+x)u}}{1-e^{-u}}u^{s-1}\,\mathrm{d}u.
\end{align*}

For the integral along $L_3$, we split the integrand as follows:
\begin{equation*}
\int_{L_3}\frac{e^{xz}z^{s-1}}{e^{-z}-1}\,\mathrm{d}z=-\int_{L_3}\frac{e^{-(1+x)z}}{1-e^{-z}}z^{s-1}\,\mathrm{d}z+\int_{L_3}\frac{e^{-(1+x)z}-e^{xz}}{1-e^{-z}}z^{s-1}\,\mathrm{d}z.
\end{equation*}
By Cauchy's theorem, the first of these integrals can be written as $\int_{L_3}=\int_{-i\alpha}^{\infty}-\int_{i\alpha}^{\infty}$. The integrand of the second integral behaves like $-(1+2x)z^{s-1}$ for $z$ close to $0$, so the integral of this function is finite even around $z=0$ (since we have assumed $\mathrm{Re}(s)>0$). This means the contour of integration can be deformed to the straight line-segment from $-i\alpha$ to $i\alpha$, and the integral can be simplified as follows:
\begin{align*}
&\int_{-i\alpha}^0\frac{e^{-(1+x)z}-e^{xz}}{1-e^{-z}}z^{s-1}\,\mathrm{d}z+\int_0^{i\alpha}\frac{e^{-(1+x)z}-e^{xz}}{1-e^{-z}}z^{s-1}\,\mathrm{d}z \\
=&-e^{-i\pi s/2}\int_0^{\alpha}\frac{e^{i(1+x)u}-e^{-ixu}}{1-e^{iu}}u^{s-1}\,\mathrm{d}u+e^{i\pi s/2}\int_0^{\alpha}\frac{e^{-i(1+x)u}-e^{ixu}}{1-e^{-iu}}u^{s-1}\,\mathrm{d}u \\
=&-e^{-i\pi s/2}\int_0^{\alpha}\frac{e^{i(1+x)u}-e^{-ixu}}{1-e^{iu}}u^{s-1}\,\mathrm{d}u+e^{i\pi s/2}\int_0^{\alpha}\frac{e^{-ixu}-e^{i(1+x)u}}{e^{iu}-1}u^{s-1}\,\mathrm{d}u \\
=&2i\sin(\tfrac{\pi s}{2})\int_0^{\alpha}\frac{e^{i(1+x)u}-e^{-ixu}}{1-e^{iu}}u^{s-1}\,\mathrm{d}u.
\end{align*}

Summing up the expressions derived for the integrals along $L_1$, $L_2$, and $L_3$, we find that (\ref{Hurwitz-AC-formula}) yields:
\begin{align*}
\begin{split}
\zeta_1(x,s)&=\frac{\Gamma(1-s)}{2\pi i}\bigg(e^{i\pi s}\int_{-i\alpha}^{\infty}\frac{e^{-(1+x)u}}{1-e^{-u}}u^{s-1}\,\mathrm{d}u-e^{-i\pi s}\int_{i\alpha}^{\infty}\frac{e^{-(1+x)u}}{1-e^{-u}}u^{s-1}\,\mathrm{d}u \\
&\hspace{3cm}+\int_{i\alpha}^{\infty}\frac{e^{-(1+x)z}}{1-e^{-z}}z^{s-1}\,\mathrm{d}z-\int_{-i\alpha}^{\infty}\frac{e^{-(1+x)z}}{1-e^{-z}}z^{s-1}\,\mathrm{d}z \\
&\hspace{5cm}+2i\sin(\tfrac{\pi s}{2})\int_0^{\alpha}\frac{e^{i(1+x)u}-e^{-ixu}}{1-e^{iu}}u^{s-1}\,\mathrm{d}u\bigg)
\end{split} \\
\begin{split}
&=\tfrac{\Gamma(1-s)}{\pi}\sin(\tfrac{\pi s}{2})\bigg(e^{i\pi s/2}\int_{-i\alpha}^{\infty}\frac{e^{-(1+x)z}}{1-e^{-z}}z^{s-1}\,\mathrm{d}z+e^{-i\pi s/2}\int_{i\alpha}^{\infty}\frac{e^{-(1+x)z}}{1-e^{-z}}z^{s-1}\,\mathrm{d}z \\
&\hspace{8cm}+\int_0^{\alpha}\frac{e^{i(1+x)u}-e^{-ixu}}{1-e^{iu}}u^{s-1}\,\mathrm{d}u\bigg).
\end{split}
\end{align*}
In the final expression above, the integrands of the first two integrals decay exponentially as $z$ tends to infinity in the right half plane. Thus, by Cauchy's theorem, the upper limits of these integrals can be replaced by $\infty e^{i\phi_1}$ and $\infty e^{i\phi_2}$ respectively for any $\phi_1,\phi_2\in\big(\tfrac{-\pi}{2},\tfrac{\pi}{2}\big)$. The expression outside the large parentheses is precisely $\tfrac{\chi(s)}{(2\pi)^s}$, so the result follows.
\end{proof}

\begin{defn}
\label{C-curves}
For $a,b\in\mathbb{R}$ with $a<b$, the curves $C_a^b$ and $\hat{C}_b^a$ are defined as follows:
\begin{align*}
C_a^b&=\bigg\{\frac{i(a+b)}{2}+\frac{b-a}{2}e^{i\theta}\,:\,\theta\in\Big(-\frac{\pi}{2},\frac{\pi}{2}\Big)\bigg\}; \\
\hat{C}_b^a&=\bigg\{\frac{i(a+b)}{2}+\frac{b-a}{2}e^{i\theta}\,:\,\theta\in\Big(-\pi,-\frac{\pi}{2}\Big)\cup\Big(\frac{\pi}{2},\pi\Big)\bigg\}.
\end{align*}
In other words, $C_a^b$ is the semicircular contour from $ia$ to $ib$ passing upwards through the right half plane, while $\hat{C}_b^a$ is the semicircular contour from $ib$ to $ia$ passing downwards through the left half plane. The two together form a full circular contour, as shown in Figure \ref{fig2}.
\end{defn}

\begin{figure}
\centering
\includegraphics[width=0.6\linewidth]{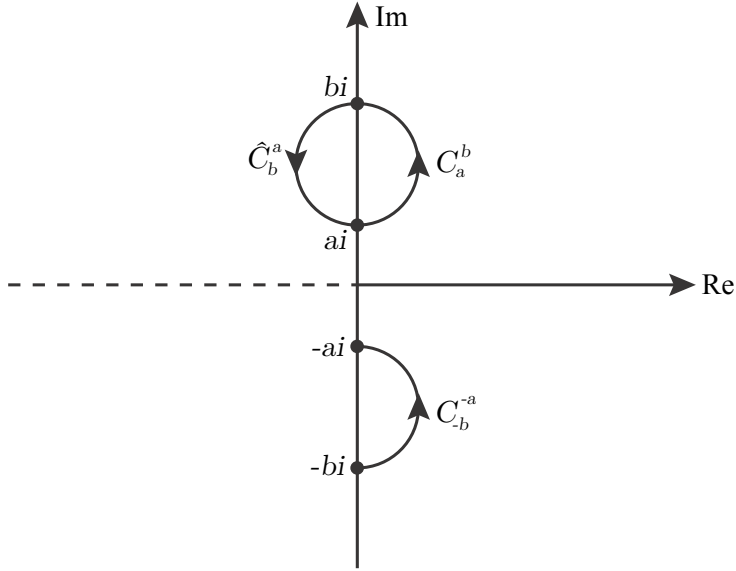}
\caption{The contours $C_a^b$, $\hat{C}_b^a$, and $C_{-b}^{-a}$}
\label{fig2}
\end{figure}

\begin{thm}
\label{Hurwitz-eta}
If $s=\sigma+it$ is a complex variable with $\sigma,t\in\mathbb{R},0<\sigma\leq1,0<t<\infty$, and $x$ is a real variable with $0<x<\infty$, then the modified Hurwitz zeta function $\zeta_1(x,s)$ can be expressed as
\begin{equation}
\label{Hurwitz-eta-formula}
\zeta_1(x,s)=\chi(s)\Bigg(\sum_{m=1}^{\lfloor\eta/2\pi\rfloor}e^{-2\pi imx}m^{s-1}-G_B(\sigma,t;\eta;x)+G_L(\sigma,t;\eta;x)+G_U(\sigma,t;\eta;x)\Bigg),
\end{equation}
for any given $\eta,\phi_1,\phi_2$ with $0<\eta<\infty$ and $-\frac{\pi}{2}<\phi_1,\phi_2<\frac{\pi}{2}$, where
\begin{align}
\label{GB-defn}
G_B(\sigma,t;\eta;x)&\coloneqq\tfrac{e^{-i\pi s/2}}{(2\pi)^s}\int_{\hat{C}^0_{\eta}}\frac{e^{(1+x)z}-e^{-xz}}{1-e^z}z^{s-1}\,\mathrm{d}z, \\
\label{GL-defn}
G_L(\sigma,t;\eta;x)&\coloneqq\tfrac{e^{i\pi s/2}}{(2\pi)^s}\int_{-i\eta}^{\infty e^{i\phi_2}}\frac{e^{-(1+x)z}}{1-e^{-z}}z^{s-1}\,\mathrm{d}z, \\
\label{GU-defn}
G_U(\sigma,t;\eta;x)&\coloneqq\tfrac{e^{-i\pi s/2}}{(2\pi)^s}\int_{i\eta}^{\infty e^{i\phi_1}}\frac{e^{-(1+x)z}}{1-e^{-z}}z^{s-1}\,\mathrm{d}z.
\end{align}
\end{thm}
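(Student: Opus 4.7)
My plan is to bootstrap from Lemma \ref{Hurwitz-alpha}: first handle the easy case $0 < \eta < 2\pi$ where the sum in (\ref{Hurwitz-eta-formula}) is empty, and then extend to arbitrary $\eta > 0$ by a contour deformation that picks up residues at the poles $\pm 2\pi im$ of the various integrands. For the easy case, $G_L(\eta)$ and $G_U(\eta)$ are literally $(2\pi)^{-s}$ times the second and third integrals of (\ref{Hurwitz-alpha-formula}) with $\alpha = \eta$, so the only thing to check is the identification $\frac{1}{(2\pi)^s}\int_0^\eta \frac{e^{i(1+x)u} - e^{-ixu}}{1 - e^{iu}} u^{s-1}\,\mathrm du = -G_B(\eta)$. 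I would verify this by substituting $z = iu$ (which contributes a factor of $e^{i\pi s/2}$ from $z^{s-1} = e^{i\pi(s-1)/2} u^{s-1}$ and $\mathrm dz = i\,\mathrm du$) and then applying Cauchy's theorem on the closed LHP semi-disk bounded by $\hat C^0_\eta$ and the segment $[0, i\eta]$, inside which the integrand $F(z) = \frac{e^{(1+x)z} - e^{-xz}}{1 - e^z} z^{s-1}$ of $G_B$ has no poles when $\eta < 2\pi$.

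For arbitrary $\eta > 0$, I would fix an auxiliary $\alpha \in (0, 2\pi)$ and compute the difference $(G_B(\eta) - G_B(\alpha)) + (G_L(\alpha) - G_L(\eta)) + (G_U(\alpha) - G_U(\eta))$, which by the easy case must equal $\sum_{m = 1}^{\lfloor \eta/2\pi \rfloor} e^{-2\pi imx} m^{s-1}$. Each difference is naturally a line integral between $\pm i\alpha$ and $\pm i\eta$: for $G_B$, applying Cauchy's theorem to the crescent bounded by $\hat C^0_\alpha$, $\hat C^0_\eta$ and the imaginary-axis segment $[i\alpha, i\eta]$ indented into the LHP around each enclosed pole of $F$ yields a principal-value axis integral together with residue contributions of $F$; for $G_U$, any path in the open RHP works, and taking the axis with RHP indentations gives another PV integral plus residues of $G$ at $2\pi im$; the analogous treatment of $G_L$ along $[-i\alpha, -i\eta]$ with RHP detours gives residues of $G$ at $-2\pi im$.

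The main obstacle is the algebraic assembly. First, the three principal-value integrals must cancel; after the substitutions $z = \pm iu$ and the collection of the phase prefactors $e^{\pm i\pi s/2}$ attached to $G_B, G_L, G_U$, this reduces to the identity $\frac{e^{-i(1+x)u}}{1 - e^{-iu}} = -\frac{e^{-ixu}}{1 - e^{iu}}$, which makes the three contributions sum to zero. Second, the residues must conspire to produce precisely $e^{-2\pi imx} m^{s-1}$: using $\mathrm{Res}(F, 2\pi im) = -2i\sin(2\pi mx)(2\pi m)^{s-1}e^{i\pi(s-1)/2}$ together with $\mathrm{Res}(G, \pm 2\pi im) = e^{\mp 2\pi imx}(2\pi m)^{s-1}e^{\pm i\pi(s-1)/2}$ and carrying along the phase prefactors $e^{\mp i\pi s/2}/(2\pi)^s$, the unwanted $e^{+2\pi imx}$ pieces arising from the $F$-residue (via $-2i\sin(2\pi mx) = e^{-2\pi imx} - e^{2\pi imx}$) and from $\mathrm{Res}(G, -2\pi im)$ must cancel, leaving exactly $e^{-2\pi imx} m^{s-1}$ per $m$. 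This phase bookkeeping is the most delicate step, but it is purely mechanical once all contributions are laid out.
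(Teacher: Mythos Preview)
Your proposal is correct and would go through, but the paper organises the argument differently. You split into a base case $\eta<2\pi$ (where the sum is empty and the identification with Lemma~\ref{Hurwitz-alpha} is immediate) and then compute three separate differences $G_B(\eta)-G_B(\alpha)$, $G_L(\alpha)-G_L(\eta)$, $G_U(\alpha)-G_U(\eta)$ as indented imaginary-axis integrals, with the principal-value parts cancelling and the half-residues at $\pm 2\pi im$ assembling into $e^{-2\pi imx}m^{s-1}$. The paper instead works in one pass: it deforms the $G_L$ and $G_U$ contours from $\pm i\alpha$ to $\pm i\eta$ along the \emph{right}-half-plane semicircles $C_{-\eta}^{-\alpha}$ and $C_\alpha^\eta$ (which miss all the poles, so no indentation or principal values are needed), then applies the substitution $z\mapsto e^{-i\pi}z$ to the $G_L$ connector to turn it into an integral over the matching \emph{left}-half-plane semicircle $\hat C_\eta^\alpha$. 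After splitting the integrand, the two half-circles combine into a single full circle on which the residue theorem yields the entire sum $\sum_m e^{-2\pi imx}m^{s-1}$ at once; the leftover piece over $\hat C_\eta^\alpha$ extends to $\hat C_\eta^0$ (giving $-G_B$) minus a straight segment $[0,i\alpha]$ that exactly cancels the first integral in~(\ref{Hurwitz-alpha-formula}).

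Both approaches ultimately rest on the same residue computation, but the paper's route avoids principal values and half-residue bookkeeping by choosing connector contours that never touch the poles, and it packages everything into one residue calculation rather than three that must be reconciled. Your approach has the virtue of making the $\eta<2\pi$ case explicit (the paper does not isolate it) and of making transparent where each piece of $e^{-2\pi imx}$ comes from; the price is the phase-tracking you correctly flag as the most delicate step. One small imprecision in your write-up: the RHP-indented axis integrals for $G_U$ and $G_L$ contribute \emph{half}-residues ($\pm i\pi\,\mathrm{Res}$), not full residues, and it is the half from $G_U$, the half from $G_L$, and the sine-term from $G_B$ that together make exactly $e^{-2\pi imx}m^{s-1}$.
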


\begin{proof}
We start with the result of Lemma \ref{Hurwitz-alpha}. By Cauchy's theorem, the contours of integrations in the second and third integrals in (\ref{Hurwitz-alpha-formula}) can be deformed so as to run first from $\pm i\alpha$ to $\pm i\eta$ and then out to infinity: \[\int_{-i\alpha}^{\infty e^{i\phi_2}}=-\int_{C_{-\eta}^{-\alpha}}+\int_{-i\eta}^{\infty e^{i\phi_2}}\,\text{ and }\,\int_{i\alpha}^{\infty e^{i\phi_1}}=\int_{C_{\alpha}^{\eta}}+\int_{i\eta}^{\infty e^{i\phi_1}}.\] Hence, the sum of the last two terms in (\ref{Hurwitz-alpha-formula}) is equal to the sum of $(2\pi)^sG_L(\sigma,t;\eta;x)$ and $(2\pi)^sG_U(\sigma,t;\eta;x)$ with the following expression:
\begin{equation}
\label{C-curve-integrals}
-e^{i\pi s/2}\int_{C_{-\eta}^{-\alpha}}\frac{e^{-(1+x)z}}{1-e^{-z}}z^{s-1}\,\mathrm{d}z+e^{-i\pi s/2}\int_{C_{\alpha}^{\eta}}\frac{e^{-(1+x)z}}{1-e^{-z}}z^{s-1}\,\mathrm{d}z.
\end{equation}
The first term of this expression, after the substitution $z=ue^{-i\pi}$, becomes
\begin{align*}
&-e^{-i\pi s/2}\int_{\hat{C}_{\eta}^{\alpha}}\frac{e^{(1+x)u}}{1-e^{u}}u^{s-1}\,\mathrm{d}u \\
=\,&e^{-i\pi s/2}\int_{\hat{C}_{\eta}^{\alpha}}\frac{e^{xu}}{1-e^{-u}}u^{s-1}\,\mathrm{d}u \\
=\,&e^{-i\pi s/2}\int_{\hat{C}_{\eta}^{\alpha}}\frac{e^{-(1+x)z}}{1-e^{-z}}z^{s-1}\,\mathrm{d}z+e^{-i\pi s/2}\int_{\hat{C}_{\eta}^{\alpha}}\frac{e^{xz}-e^{-(1+x)z}}{1-e^{-z}}z^{s-1}\,\mathrm{d}z.
\end{align*}
So the expression (\ref{C-curve-integrals}) can be rewritten as: \[e^{-i\pi s/2}\int_{C'}\frac{e^{-(1+x)z}}{1-e^{-z}}z^{s-1}\,\mathrm{d}z+e^{-i\pi s/2}\int_{\hat{C}_{\eta}^{\alpha}}\frac{e^{xz}-e^{-(1+x)z}}{1-e^{-z}}z^{s-1}\,\mathrm{d}z,\] where $C'$ is the circle with centre $\tfrac{\alpha+\eta}{2}$ formed by combining the two semicircles $C_{\alpha}^{\eta}$ and $\hat{C}_{\eta}^{\alpha}$. By the residue theorem, we can compute this circular integral explicitly:
\begin{equation*}
\int_{C'}\frac{e^{-(1+x)z}}{1-e^{-z}}z^{s-1}\,\mathrm{d}z=\sum_{m=1}^{\lfloor\eta/2\pi\rfloor}2\pi i\mathrm{Res}_{2\pi mi}\bigg(\frac{e^{-(1+x)z}}{1-e^{-z}}z^{s-1}\bigg)=\sum_{m=1}^{\lfloor\eta/2\pi\rfloor}e^{-2\pi imx}m^{s-1}(2\pi i)^s.
\end{equation*}
So the expression (\ref{C-curve-integrals}) can be rewritten as:
\begin{align*}
&(2\pi)^s\sum_{m=1}^{\lfloor\eta/2\pi\rfloor}e^{-2\pi imx}m^{s-1}+e^{-i\pi s/2}\int_{\hat{C}_{\eta}^{\alpha}}\frac{e^{xz}-e^{-(1+x)z}}{1-e^{-z}}z^{s-1}\,\mathrm{d}z \\
=\,&(2\pi)^s\sum_{m=1}^{\lfloor\eta/2\pi\rfloor}e^{-2\pi imx}m^{s-1}+e^{-i\pi s/2}\int_{\hat{C}_{\eta}^0}\frac{e^{xz}-e^{-(1+x)z}}{1-e^{-z}}z^{s-1}\,\mathrm{d}z+e^{-i\pi s/2}\int_0^{i\alpha}\frac{e^{xz}-e^{-(1+x)z}}{1-e^{-z}}z^{s-1}\,\mathrm{d}z.
\end{align*}
The last term of this expression, after substituting $z=iu$, becomes exactly minus the first integral in (\ref{Hurwitz-alpha-formula}). Hence, starting from the formula (\ref{Hurwitz-alpha-formula}) for $\zeta_1(x,s)$, we find:
\begin{align*}
\zeta_1(x,s)&=\frac{\chi(s)}{(2\pi)^s}\bigg(\int_0^{\alpha}\frac{e^{i(1+x)u}-e^{-ixu}}{1-e^{iu}}u^{s-1}\,\mathrm{d}u+(2\pi)^sG_L(\sigma,t;\eta;x)+(2\pi)^sG_U(\sigma,t;\eta;x)+(\ref{C-curve-integrals})\bigg) \\
\begin{split}
&=\frac{\chi(s)}{(2\pi)^s}\bigg((2\pi)^sG_L(\sigma,t;\eta;x)+(2\pi)^sG_U(\sigma,t;\eta;x)+(2\pi)^s\sum_{m=1}^{\lfloor\eta/2\pi\rfloor}e^{-2\pi imx}m^{s-1} \\
&\hspace{7cm}+e^{-i\pi s/2}\int_{\hat{C}_{\eta}^0}\frac{e^{xz}-e^{-(1+x)z}}{1-e^{-z}}z^{s-1}\,\mathrm{d}z\bigg)
\end{split} \\
&=\chi(s)\bigg(\sum_{m=1}^{\lfloor\eta/2\pi\rfloor}e^{-2\pi imx}m^{s-1}+G_L(\sigma,t;\eta;x)+G_U(\sigma,t;\eta;x)-G_B(\sigma,t;\eta;x)\bigg),
\end{align*}
as required.
\end{proof}

\begin{remark}
\normalfont For the particular case of $x=0$, we find \[G_B(\sigma,t;\eta;0)=\tfrac{e^{-i\pi s/2}}{(2\pi)^s}\int_{\hat{C}^0_{\eta}}\big(-z^{s-1}\big)\,\mathrm{d}z=\frac{1}{(2\pi)^s}\Big(-\frac{\eta^s}{s}\Big),\] and therefore the identity (\ref{Hurwitz-eta-formula}) reduces, as expected, to the formula (\ref{Riemann-starting-point}) proved in \cite{fokas-lenells}.
\end{remark}

In the remainder of this paper, we shall construct the large-$t$ asymptotics of each of $G_L,G_U,G_B$, and hence derive a large-$t$ asymptotic formula for $\zeta_1(x,s)$.

\begin{lem}
\label{D_N}
The function $D_N$ defined by
\begin{equation}
\label{DN-defn}
D_N(z;\xi;\sigma,t)\coloneqq\big(\tfrac{\mathrm{d}}{\mathrm{d}z}\cdot\tfrac{1}{\xi-\frac{it}{z}}\big)^N(z^{\sigma-1})
\end{equation}
can be expressed in the following form for any $N\geq0$:
\begin{equation}
\label{DN-series}
D_N=\sum_{b=0}^N\sum_{c=0}^NA_{bc}^{(N)}\Bigg(\frac{t^b\xi^{N-b}\sigma^cz^{N-b}}{(\xi z-it)^{2N}}\Bigg)z^{\sigma-1},
\end{equation}
where $A_{bc}^{(N)}$ is a Gaussian integer with absolute value $\leq(2N-1)!!\coloneqq(1)(3)(5)\dots(2N-3)(2N-1)$ for each $b,c$.
\end{lem}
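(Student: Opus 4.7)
The plan is to proceed by induction on $N$. The base case $N=0$ is immediate: $D_0(z;\xi;\sigma,t)=z^{\sigma-1}$ matches the claimed form with the single coefficient $A_{00}^{(0)}=1$, and the bound $(-1)!!=1$ holds under the empty-product convention.

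For the inductive step, I would write
\[
D_N = \frac{z^{\sigma-1}}{(\xi z - it)^{2N}}\,R_N, \qquad R_N = \sum_{b,c=0}^{N} A_{bc}^{(N)}\, t^{b}\xi^{N-b}\sigma^{c}z^{N-b},
\]
and then compute
\[
D_{N+1} = \frac{\mathrm{d}}{\mathrm{d}z}\!\left(\frac{z\,D_N}{\xi z - it}\right) = \frac{\mathrm{d}}{\mathrm{d}z}\!\left(\frac{z^{\sigma}R_N}{(\xi z - it)^{2N+1}}\right)
\]
by the product and quotient rules. After factoring out $z^{\sigma-1}(\xi z - it)^{-2(N+1)}$, each resulting monomial takes the prescribed form $t^{b'}\xi^{N+1-b'}\sigma^{c'}z^{N+1-b'}$, where the new indices $(b',c')$ arise from the old $(b,c)$ by shifting each slot by $0$ or $1$. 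Collecting coefficients gives the four-term recurrence
\[
A^{(N+1)}_{b',c'} = A^{(N)}_{b',c'-1} - (N+b'+1)\,A^{(N)}_{b',c'} - i\,A^{(N)}_{b'-1,c'-1} - i(N-b'+1)\,A^{(N)}_{b'-1,c'},
\]
with the convention that $A^{(N)}_{bc}=0$ whenever either index lies outside $\{0,1,\dots,N\}$. Every scalar on the right is a Gaussian integer, so the Gaussian integer property of $A^{(N+1)}_{b',c'}$ propagates at once from the inductive hypothesis.

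The main obstacle is the magnitude bound. The direct triangle inequality on the recurrence gives only
\[
|A^{(N+1)}_{b',c'}|\leq\bigl[1+(N+b'+1)+1+(N-b'+1)\bigr](2N-1)!! = (2N+4)(2N-1)!!,
\]
which overshoots the target $(2N+1)!!=(2N+1)(2N-1)!!$ by a factor of $(2N+4)/(2N+1)$. To close this gap I would split the analysis by $b'$: at the boundary $b'\in\{0,N+1\}$ two of the four recurrence terms vanish automatically by the zero-convention, immediately giving the sharper estimate; in the interior $1\leq b'\leq N$ one must either perform a more refined case analysis exploiting that the four inductively-extremal entries $A^{(N)}_{b',c'-1}$, $A^{(N)}_{b',c'}$, $A^{(N)}_{b'-1,c'-1}$, $A^{(N)}_{b'-1,c'}$ cannot simultaneously be saturated with aligned phases, or strengthen the inductive hypothesis to a $b$-dependent bound $|A^{(N)}_{bc}|\leq f(N,b)$ for which the one-dimensional recurrence on the row maxima closes with multiplicative factor $(2N+1)$ rather than $(2N+4)$. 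Either route yields $|A^{(N+1)}_{b',c'}|\leq(2N+1)!!$, completing the induction.
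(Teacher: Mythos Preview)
Your overall approach---induction on $N$, verifying the base case with the convention $(-1)!!=1$, computing $D_{N+1}=\tfrac{\mathrm{d}}{\mathrm{d}z}\bigl(\tfrac{zD_N}{\xi z-it}\bigr)$, and extracting the resulting four families of monomials---is exactly what the paper does. Your explicit four-term recurrence for $A^{(N+1)}_{b',c'}$ is correct and is simply a repackaging of the paper's final displayed expansion.

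The point where you diverge from the paper is, as you notice, the magnitude bound. The paper does \emph{not} address it: after writing out the four monomial families it ends with the single sentence ``Thus, setting the values of $A_{bc}^{(N+1)}$ as suggested by this expression, we obtain a formula for $D_{N+1}$ in the form of~(\ref{DN-series}),'' without any verification that $|A_{bc}^{(N+1)}|\le(2N+1)!!$. So the overshoot $(2N+4)(2N-1)!!$ versus $(2N+1)!!$ that you flag is a genuine gap in the paper's argument as written, not something you have missed.

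That said, your proposed remedies do not close the gap either. The boundary observation (two of the four terms vanish when $b'\in\{0,N+1\}$) is correct and handles those cases, but for interior $b'$ your two suggestions are only programmatic: the claim that the four contributing coefficients ``cannot simultaneously be saturated with aligned phases'' is unsubstantiated, and no candidate $b$-dependent bound $f(N,b)$ is exhibited for which the row recurrence actually closes with factor $2N+1$. (One structural fact you did not use: an easy auxiliary induction shows $i^{-b}A^{(N)}_{bc}$ is always real, so after this normalisation all four recurrence terms are real and any saving must come from sign cancellation, not phase misalignment in $\mathbb{C}$.) In short, both you and the paper leave the sharp constant $(2N-1)!!$ unverified; the form of $D_N$ and the Gaussian-integer property are established, but the stated bound is asserted rather than proved. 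For the applications later in the paper this is harmless, since only an estimate of the shape $D_N=O\bigl(C(N)|z|^{\sigma-1}\max(\xi|z|,t)^{-N}\bigr)$ is ever used.
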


\begin{proof}
Following the argument of \cite{fokas-lenells}, we proceed by induction on $N$.

In the base case $N=0$, we must also have $b=c=0$ and so the expression (\ref{DN-series}) reduces to \[D_N=A_{00}^{(0)}\Bigg(\frac{t^0\xi^{0}\sigma^0z^{0}}{(\xi z-it)^{0}}\Bigg)z^{\sigma-1}=A_{00}^{(0)}z^{\sigma-1}.\] By (\ref{DN-defn}), this is valid with $A_{00}^{(0)}=1=(-1)!!$. (It makes sense to define $(-1)!!=1$ in the same way as we ordinarily define $0!=1$, because $(2N+1)!!=(2N-1)!!(2N+1)$ for all $N$ and $1!!=1$.)

Now assume that $D_N$ can be written in the form (\ref{DN-series}) for some fixed $N\geq0$, and consider $D_{N+1}$. Using the definition (\ref{DN-defn}), we have:
\begin{align*}
D_{N+1}&=\frac{\mathrm{d}}{\mathrm{d}z}\bigg(\frac{D_N}{\xi-\tfrac{it}{z}}\bigg)=\frac{\mathrm{d}}{\mathrm{d}z}\bigg(\frac{zD_N}{\xi z-it}\bigg) \\
&=\frac{\mathrm{d}}{\mathrm{d}z}\Bigg(\sum_{b=0}^N\sum_{c=0}^NA_{bc}^{(N)}\Bigg(\frac{t^b\xi^{N-b}\sigma^cz^{N+1-b}}{(\xi z-it)^{2N+1}}\Bigg)z^{\sigma-1}\Bigg) \\
&=\sum_{b=0}^N\sum_{c=0}^NA_{bc}^{(N)}\frac{\big[t^b\xi^{N-b}\sigma^c(N+\sigma-b)z^{N+\sigma-b-1}\big](\xi z-it)-\big[t^b\xi^{N-b}\sigma^cz^{N+\sigma-b}\big](2N+1)\xi}{(\xi z-it)^{2N+2}} \\
&=\sum_{b=0}^N\sum_{c=0}^NA_{bc}^{(N)}\frac{t^b\xi^{N+1-b}\sigma^c(-N-1+\sigma-b)z^{N+\sigma-b}-it^{b+1}\xi^{N-b}\sigma^c(N+\sigma-b)z^{N+\sigma-b-1}}{(\xi z-it)^{2N+2}} \\
\begin{split}
&=\sum_{b=0}^N\sum_{c=0}^N\Bigg(-(N+1+b)A_{bc}^{(N)}\frac{t^b\xi^{N+1-b}\sigma^cz^{N+1-b}}{(\xi z-it)^{2N+2}}+A_{bc}^{(N)}\frac{t^b\xi^{N+1-b}\sigma^{c+1}z^{N+1-b}}{(\xi z-it)^{2N+2}} \\
&\hspace{3cm}-i(N-b)A_{bc}^{(N)}\frac{t^{b+1}\xi^{N-b}\sigma^cz^{N-b}}{(\xi z-it)^{2N+2}}-iA_{bc}^{(N)}\frac{t^{b+1}\xi^{N-b}\sigma^{c+1}z^{N-b}}{(\xi z-it)^{2N+2}}\Bigg)z^{\sigma-1}.
\end{split}
\end{align*}
Thus, setting the values of $A_{bc}^{(N+1)}$ as suggested by this expression, we obtain a formula for $D_{N+1}$ in the form of (\ref{DN-series}).
\end{proof}

\begin{assn}
\label{Assumption}
We shall fix $\epsilon>0$ and assume that the variable $\eta$ is never, for any integer $n$, within a factor of $1\pm\epsilon$ of the quantity $\tfrac{t}{x+n}$. In other words, we assume that \[\forall n\in\mathbb{Z},\;\text{ either }\;\eta>(1+\epsilon)\tfrac{t}{x+n}\;\text{ or }\;\eta<(1-\epsilon)\tfrac{t}{x+n}.\]
The above assumption can be rewritten as \[\mathrm{dist}\Big(x-\frac{t}{\eta},\mathbb{Z}\Big)>\frac{\epsilon t}{\eta},\] or equivalently as
\begin{equation}
\label{assumption-inequality}
\forall n\in\mathbb{Z},\;\big|(x+n)\eta-t\big|>\epsilon t.
\end{equation}
Note that to find out whether a given $\eta$ satisfies (\ref{assumption-inequality}), it suffices to check for the particular value of $n$ such that $|(x+n)\eta-t|$ is minimal, i.e. for $n=\lfloor\tfrac{t}{\eta}-x+\tfrac{1}{2}\rfloor$, the closest integer to $\tfrac{t}{\eta}-x$. This $n$ may be either a positive or negative integer, depending on the values of $x$, $t$, and $\eta$.
\end{assn}

\section{Asymptotics for $G_L$}

The series $\sum_{n=0}^{\infty}e^{-nz}=\frac{1}{1-e^{-z}}$ is locally uniformly convergent for $\mathrm{Re}(z)>0$, so we can interchange the series and integral to obtain
\begin{equation}
\label{GL-series}
G_L(\sigma,t;\eta;x)=\tfrac{e^{i\pi s/2}}{(2\pi)^s}\sum_{n=1}^{\infty}\int_{-i\eta}^{\infty e^{i\phi_2}}e^{-(x+n)z}z^{s-1}\,\mathrm{d}z.
\end{equation}
Repeatedly integrating by parts in the summand gives
\begin{multline}
\label{GL-summand-IbP}
\int_{-i\eta}^{\infty e^{i\phi_2}}e^{-(x+n)z}z^{s-1}\,\mathrm{d}z=\sum_{j=0}^{N-1}e^{-(x+n)z+it\log z}\Big(\tfrac{1}{x+n-\frac{it}{z}}\cdot\tfrac{\mathrm{d}}{\mathrm{d}z}\Big)^j\Big(\tfrac{z^{\sigma-1}}{x+n-\frac{it}{z}}\Big)\Bigg|_{z=-i\eta} \\ +\int_{-i\eta}^{\infty e^{i\phi_2}}e^{-(x+n)z+it\log z}D_N(z;x+n;\sigma,t)\,\mathrm{d}z
\end{multline}
for any $N\in\mathbb{N}$, where $D_N$ is defined by (\ref{DN-defn}).

In what follows, we shall take $\phi_2=0$, so that $z\in-i\eta+\mathbb{R}^+$.

\begin{lem}
\label{GL-DN-estimate}
$D_N$ can be uniformly estimated in either of the following ways, both valid for $\mathrm{Im}(z)<0$ and $\xi>0$:
\begin{align}
\label{GL-DN-estimate1}
D_N(z;\xi;\sigma,t)&=O\big((2N-1)!!(N+1)^2|z|^{\sigma-N-1}\xi^{-N}\big); \\
\label{GL-DN-estimate2}
D_N(z;\xi;\sigma,t)&=O\big((2N-1)!!(N+1)^2|z|^{\sigma-1}t^{-N}\big).
\end{align}
\end{lem}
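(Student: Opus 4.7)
The plan is to start from the explicit expansion for $D_N$ given by Lemma \ref{D_N}, namely
\[D_N(z;\xi;\sigma,t) = \sum_{b=0}^{N}\sum_{c=0}^{N} A_{bc}^{(N)}\,\frac{t^{b}\,\xi^{N-b}\,\sigma^{c}\,z^{N-b}}{(\xi z - it)^{2N}}\,z^{\sigma-1},\]
and to bound each of the $(N+1)^{2}$ summands individually. The two claimed estimates will arise from two different ways of distributing the $2N$ factors of $|\xi z - it|$ in the denominator between ``$\xi|z|$-absorbing'' and ``$t$-absorbing'' pieces.

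The key preliminary inequality is that, since $\mathrm{Im}(z) < 0$ and $\xi,t > 0$, one has
\[|\xi z - it|^{2} = \xi^{2}|z|^{2} + t^{2} - 2\xi t\,\mathrm{Im}(z) \geq \xi^{2}|z|^{2} + t^{2},\]
so that $|\xi z - it| \geq \xi|z|$ and $|\xi z - it| \geq t$ simultaneously. For (\ref{GL-DN-estimate1}), I would split $|\xi z - it|^{2N} = |\xi z - it|^{2N-b}\cdot|\xi z - it|^{b} \geq (\xi|z|)^{2N-b}\,t^{b}$; the $t^{b}$ then cancels the $t^{b}$ from the numerator, and the surviving per-summand bound is $\sigma^{c}\,\xi^{-N}\,|z|^{\sigma-N-1}$. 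For (\ref{GL-DN-estimate2}), the splitting is instead $|\xi z - it|^{2N} = |\xi z - it|^{N-b}\cdot|\xi z - it|^{N+b} \geq (\xi|z|)^{N-b}\,t^{N+b}$; this time the $\xi^{N-b}|z|^{N-b}$ in the numerator is cancelled, and the surviving per-summand bound is $\sigma^{c}\,t^{-N}\,|z|^{\sigma-1}$.

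In both cases, $0 < \sigma \leq 1$ gives $\sigma^{c} \leq 1$, and Lemma \ref{D_N} supplies $|A_{bc}^{(N)}| \leq (2N-1)!!$. Summing the per-summand bound over the $(N+1)^{2}$ index pairs $(b,c)$ with $0 \leq b,c \leq N$ produces the prefactor $(2N-1)!!\,(N+1)^{2}$ in each of (\ref{GL-DN-estimate1}) and (\ref{GL-DN-estimate2}). The only genuine obstacle is noticing the correct splitting of $|\xi z - it|^{2N}$ in each case to make $t$, $\xi$, or $|z|$ cancel cleanly; once this is seen, the remainder is routine algebraic bookkeeping, and the resulting constants depend only on $N$, making the estimates uniform in $z$, $\xi$, $\sigma$ and $t$ throughout the admissible region.
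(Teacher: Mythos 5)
Your proposal is correct and follows essentially the same route as the paper: both start from the expansion of Lemma \ref{D_N}, use $|A_{bc}^{(N)}|\leq(2N-1)!!$ and $\sigma^c\leq1$, and rest on the same key observation that $|\xi z-it|$ dominates both $\xi|z|$ and $t$ when $\mathrm{Im}(z)<0$ and $\xi,t>0$. The only (cosmetic) difference is that you distribute the $2N$ powers of $|\xi z-it|$ in a $b$-dependent way so that each estimate drops out directly summand by summand, whereas the paper splits into the cases $|\xi z|>t$ and $|\xi z|<t$ and then deduces both bounds from the common estimate $O\big((2N-1)!!(N+1)^2|z|^{\sigma-1}\max(\xi|z|,t)^{-N}\big)$.
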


\begin{proof}
By (\ref{DN-series}), we have the following two expressions for $D_N$:
\begin{align*}
D_N(z;\xi;\sigma,t)&=O\Bigg((2N-1)!!|z|^{\sigma-1}\sum_{b=0}^N\sum_{c=0}^N\Big|\tfrac{t}{\xi z}\Big|^b|\sigma|^c\Big|\tfrac{\xi z}{(\xi z-it)^2}\Big|^N\Bigg); \\
D_N(z;\xi;\sigma,t)&=O\Bigg((2N-1)!!|z|^{\sigma-1}\sum_{b=0}^N\sum_{c=0}^N\Big|\tfrac{\xi z}{t}\Big|^{N-b}|\sigma|^c\Big|\tfrac{t}{(\xi z-it)^2}\Big|^N\Bigg).
\end{align*}
Since $0<\sigma\leq1$ and $|\xi z-it|$ is greater than both $|\xi z|$ and $t$ (by our assumption on $z$ and the fact that $\xi$ and $t$ are positive reals), we can simplify these estimates as follows.

\textbf{Case 1:} $\boldsymbol{|\xi z|>t}.$

In this case,
\begin{align*}
D_N&=O\Bigg((2N-1)!!|z|^{\sigma-1}\sum_{b=0}^N\Big|\tfrac{t}{\xi z}\Big|^b(N+1)\Big|\tfrac{\xi z}{(\xi z)^2}\Big|^N\Bigg) \nonumber=O\Big((2N-1)!!(N+1)^2|z|^{\sigma-N-1}\xi ^{-N}\Big).
\end{align*}

\textbf{Case 2:} $\boldsymbol{|\xi z|<t}.$

In this case,
\begin{align*}
D_N&=O\Bigg((2N-1)!!|z|^{\sigma-1}\sum_{b=0}^N\Big|\tfrac{\xi z}{t}\Big|^{N-b}(N+1)\big|\tfrac{t}{t^2}\big|^N\Bigg) \nonumber=O\Big((2N-1)!!(N+1)^2|z|^{\sigma-1}t^{-N}\Big).
\end{align*}
In both cases, we have $D_N=O\Big((2N-1)!!(N+1)^2|z|^{\sigma-1}\max\big(\xi |z|,t\big)^{-N}\Big)$, from which both the estimates (\ref{GL-DN-estimate1}), (\ref{GL-DN-estimate2}) follow. Note also that in each case the bound is uniform in all variables: in fact, the $O$-constant can be taken to be $1$.
\end{proof}

\begin{lem}
\label{GL-estimate}
We have the following estimate for $G_L$, uniform in $\sigma$, $t$, $\eta$, $x$, and $N\geq1$:
\begin{multline*}
G_L(\sigma,t;\eta;x)=\frac{e^{i\pi\sigma/2}e^{it\log\eta}}{(2\pi)^s}\sum_{n=1}^{M}\sum_{j=0}^{N-1}e^{i(x+n)\eta}\Bigg[\Big(\tfrac{1}{x+n-\frac{it}{z}}\cdot\tfrac{\mathrm{d}}{\mathrm{d}z}\Big)^j\Big(\tfrac{z^{\sigma-1}}{x+n-\frac{it}{z}}\Big)\Bigg]_{z=-i\eta} \\ +O\Big((2N-1)!!(N+1)^2\eta^{\sigma-N-1}\Big),
\end{multline*}
where $M$ is a finite number depending only on $N$ and $\eta$.
\end{lem}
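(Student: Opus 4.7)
The plan is to combine the locally uniformly convergent expansion (\ref{GL-series}) with the exact integration-by-parts identity (\ref{GL-summand-IbP}) applied term-by-term, and then control the two error pieces that arise: the sum of the remainder integrals, and the tail $n>M$ of the resulting double boundary series. Setting $\phi_2=0$, so that the contour is $z=u-i\eta$ for $u\ge0$, the identity $e^{it\log(-i\eta)}=e^{it\log\eta}\,e^{\pi t/2}$ combines with $e^{i\pi s/2}=e^{i\pi\sigma/2}e^{-\pi t/2}$ to produce the prefactor $e^{i\pi\sigma/2}e^{it\log\eta}/(2\pi)^s$ advertised in the statement. It then remains to show that both error pieces are $O\bigl((2N-1)!!(N+1)^2\eta^{\sigma-N-1}\bigr)$.

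For the remainder integrals $\sum_{n\ge1}\int_{-i\eta}^{\infty}e^{-(x+n)z+it\log z}D_N(z;x+n;\sigma,t)\,dz$, I would use $|e^{-(x+n)z+it\log z}|\le e^{-(x+n)u}e^{\pi t/2}$ on the contour (the $e^{\pi t/2}$ cancelling $e^{-\pi t/2}$ from $|e^{i\pi s/2}/(2\pi)^s|$) and then invoke Lemma \ref{GL-DN-estimate}, splitting the index $n$ according to whether $(x+n)\eta>t$ (Case 1) or $(x+n)\eta<t$ (Case 2). In Case 1, the bound $|D_N|\lesssim(2N-1)!!(N+1)^2|z|^{\sigma-N-1}(x+n)^{-N}$, together with $|z|\ge\eta$ and $\sigma-N-1\le0$, yields after the $u$-integration and summation in $n$ a total of order $(2N-1)!!(N+1)^2\eta^{\sigma-N-1}\sum_{n\ge1}(x+n)^{-N-1}$, which is of the desired size since $N\ge1$. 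Case 2 only arises when $\eta<t$, and there the alternative bound gives a total $\lesssim(2N-1)!!(N+1)^2\eta^{\sigma-1}t^{-N}\log(t/\eta)$; the elementary inequality $(\eta/t)^N\log(t/\eta)\le1/(eN)$ valid for $\eta<t$ then reduces this to the desired $O(\eta^{\sigma-N-1})$.

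For the tail of the boundary double series, write the $(n,j)$-th boundary value as $e^{i(x+n)\eta}Q_j(-i\eta;x+n;\sigma,t)$ where $Q_j\coloneqq\bigl(\tfrac{1}{\xi-it/z}\tfrac{d}{dz}\bigr)^j\bigl(\tfrac{z^{\sigma-1}}{\xi-it/z}\bigr)$. An easy induction shows $Q_j=\tfrac{1}{\xi-it/z}\cdot D_j(z;\xi;\sigma,t)$, which together with Lemma \ref{GL-DN-estimate} gives $|Q_j(-i\eta;x+n;\sigma,t)|\lesssim(2j-1)!!(j+1)^2\eta^{\sigma-j-1}(x+n)^{-j-1}$ whenever $(x+n)\eta>t$. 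For indices $1\le j\le N-1$, absolute summation over $n>M$ is then bounded by $M^{-j}\eta^{\sigma-j-1}$, which is $O(\eta^{\sigma-N-1})$ provided $M\ge\eta^{(N-j)/j}$. For $j=0$ the series is only conditionally convergent, and I would apply Abel/Dirichlet summation with the partial-sum bound $\bigl|\sum_{n=M+1}^{K}e^{in\eta}\bigr|\le|1-e^{i\eta}|^{-1}$ to trade conditional convergence for a gain of $M^{-1}$; choosing $M$ of order $\eta^{N}$ (with a constant absorbing the oscillatory denominator) makes this tail $O(\eta^{\sigma-N-1})$ as well. Taking $M$ to be the maximum of these finitely many thresholds produces a choice depending only on $N$ and $\eta$.

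The main obstacle will be the $j=0$ tail, where absolute convergence fails and the Abel/Dirichlet argument introduces a factor depending on the distance of $\eta$ from $2\pi\mathbb{Z}$; this must be absorbed into $M(N,\eta)$ (or into the admissible $\eta$-range of Assumption \ref{Assumption}) in order to preserve uniformity in the remaining variables. Everything else is routine bookkeeping of the $e^{\pm\pi t/2}$ factors, a direct application of Lemma \ref{GL-DN-estimate}, and elementary tail estimates for power series in $1/(x+n)$.
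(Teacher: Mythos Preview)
Your proposal is correct and follows essentially the same route as the paper. The paper also combines (\ref{GL-series}) with (\ref{GL-summand-IbP}) at $\phi_2=0$, uses the same prefactor simplification, bounds the remainder integrals via Lemma \ref{GL-DN-estimate}, and truncates the boundary series at some $M(N,\eta)$ determined by tail conditions of the form $\sum_{n>M}e^{in\eta}n^{-1}\le\eta^{-N}$ and $\sum_{n>M}n^{-2}\le\eta^{1-N}$, treating the $j=0$ piece separately from the absolutely summable $j\ge1$ pieces exactly as you outline.

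The one place where you work harder than necessary is the remainder-integral step: you split the sum over $n$ into the two regimes $(x+n)\eta\gtrless t$ and then have to dispose of a $\log(t/\eta)$ factor via $(\eta/t)^N\log(t/\eta)\le(eN)^{-1}$. The paper avoids this by observing that the bound (\ref{GL-DN-estimate1}) holds for \emph{all} $z$ with $\mathrm{Im}(z)<0$ (not only when $(x+n)|z|>t$; this is stated at the end of Lemma \ref{GL-DN-estimate}), so one can use it uniformly in $n$ and obtain $\eta^{\sigma-N-1}\sum_{n\ge1}(x+n)^{-N-1}$ in a single stroke. Your route is not wrong, just a detour.
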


\begin{proof}
Employing (\ref{GL-series}) and (\ref{GL-summand-IbP}) (where we have set $\phi_2=0$), it suffices to estimate \[\sum_{n=1}^{\infty}\int_{-i\eta}^{\infty}e^{-(x+n)z+it\log z}D_N(z;x+n;\sigma,t)\,\mathrm{d}z,\] which can be achieved using Lemma \ref{GL-DN-estimate}. By equation (\ref{GL-DN-estimate1}), this expression is given by:
\begin{align*}
O&\Bigg(\sum_{n=1}^{\infty}\int_{-i\eta}^{\infty}e^{-(x+n)z+it\log z}(2N-1)!!(N+1)^2|z|^{\sigma-N-1}(x+n)^{-N}\,\mathrm{d}z\Bigg) \\
=O&\Bigg((2N-1)!!(N+1)^2\sum_{n=1}^{\infty}\int_{-i\eta}^{\infty}e^{-(x+n)\mathrm{Re}(z)}e^{\pi t/2}\eta^{\sigma-N-1}(x+n)^{-N}\,\mathrm{d}z\Bigg) \\
=O&\Bigg((2N-1)!!(N+1)^2e^{\pi t/2}\eta^{\sigma-N-1}\sum_{n=1}^{\infty}(x+n)^{-N}\int_{0}^{\infty}e^{-(x+n)u}\,\mathrm{d}z\Bigg) \\
=O&\Bigg((2N-1)!!(N+1)^2e^{\pi t/2}\eta^{\sigma-N-1}\sum_{n=1}^{\infty}(x+n)^{-N-1}\Bigg)=O\Big((2N-1)!!(N+1)^2e^{\pi t/2}\eta^{\sigma-N-1}\Big).
\end{align*}

Since $e^{i\pi s/2}=e^{i\pi\sigma/2}e^{-\pi t/2}$ and $e^{it\log(-i\eta)}=e^{it\log\eta}e^{\pi t/2}$, using the above estimate together with (\ref{GL-series}) and (\ref{GL-summand-IbP}) yields:
\begin{align*}
G_L(\sigma,t;\eta;x)&=\tfrac{e^{i\pi\sigma/2}e^{-\pi t/2}}{(2\pi)^s}\sum_{n=1}^{\infty}\int_{-i\eta}^{\infty e^{i\phi_2}}e^{-(x+n)z}z^{s-1}\,\mathrm{d}z \\
\begin{split}
&=\frac{e^{i\pi\sigma/2}e^{it\log\eta}}{(2\pi)^s}\sum_{n=1}^{\infty}\sum_{j=0}^{N-1}e^{i(x+n)\eta}\Bigg[\Big(\tfrac{1}{x+n-\frac{it}{z}}\cdot\tfrac{\mathrm{d}}{\mathrm{d}z}\Big)^j\Big(\tfrac{z^{\sigma-1}}{x+n-\frac{it}{z}}\Big)\Bigg]_{z=-i\eta} \\ &\hspace{3cm} +\frac{e^{i\pi\sigma/2}e^{-\pi t/2}}{(2\pi)^s}\sum_{n=1}^{\infty}\int_{-i\eta}^{\infty e^{i\phi_2}}e^{-(x+n)z+it\log z}D_N(z;x+n;\sigma,t)\,\mathrm{d}z
\end{split}\\
\begin{split}
&=\frac{e^{i\pi\sigma/2}e^{it\log\eta}}{(2\pi)^s}\sum_{n=1}^{\infty}\sum_{j=0}^{N-1}e^{i(x+n)\eta}\Bigg[\Big(\tfrac{1}{x+n-\frac{it}{z}}\cdot\tfrac{\mathrm{d}}{\mathrm{d}z}\Big)^j\Big(\tfrac{z^{\sigma-1}}{x+n-\frac{it}{z}}\Big)\Bigg]_{z=-i\eta} \\ &\hspace{7cm}+O\Big((2N-1)!!(N+1)^2\eta^{\sigma-N-1}\Big).
\end{split}
\end{align*}
The uniformity of the $O$-bound is inherited from Lemma \ref{GL-DN-estimate}. In order to derive the final result, we just need to find $M(N,\eta)$ large enough so that
\begin{equation}
\label{GL-M-tail}
\sum_{n=M+1}^{\infty}\sum_{j=0}^{N-1}e^{i(x+n)\eta}\Bigg[\Big(\tfrac{1}{x+n-\frac{it}{z}}\cdot\tfrac{\mathrm{d}}{\mathrm{d}z}\Big)^j\Big(\tfrac{z^{\sigma-1}}{x+n-\frac{it}{z}}\Big)\Bigg]_{z=-i\eta}=O\Big((2N-1)!!(N+1)^2\eta^{\sigma-N-1}\Big).
\end{equation}
Using the definition of $D_N$ and the bound (\ref{GL-DN-estimate1}) for $D_N$, we can estimate the left hand side of (\ref{GL-M-tail}) as follows:
\begin{align*}
&{\color{white}=}\sum_{n=M+1}^{\infty}\sum_{j=0}^{N-1}\frac{e^{i(x+n)\eta}}{x+n+\frac{t}{\eta}}D_j(-i\eta;x+n;\sigma,t) \\
&=\sum_{n=M+1}^{\infty}\frac{e^{i(x+n)\eta}(-i\eta)^{\sigma-1}}{x+n+\frac{t}{\eta}}+\sum_{n=M+1}^{\infty}\sum_{j=1}^{N-1}O\bigg(\frac{1}{x+n+\frac{t}{\eta}}(2j-1)!!(j+1)^2\eta^{\sigma-j-1}(x+n)^{-j}\bigg) \\
&=O\bigg(\eta^{\sigma-1}\sum_{n=M+1}^{\infty}\frac{e^{in\eta}}{x+n+\frac{t}{\eta}}\bigg)+\sum_{j=1}^{N-1}O\bigg((2j-1)!!(j+1)^2\eta^{\sigma-j-1}\sum_{n=M+1}^{\infty}(x+n)^{-j-1}\bigg) \\
&=O\bigg(\eta^{\sigma-1}\sum_{n=M+1}^{\infty}\frac{e^{in\eta}}{x+n}\bigg)+O\bigg((2N-3)!!N^2\sum_{j=1}^{N-1}\eta^{\sigma-j-1}\sum_{n=M+1}^{\infty}(x+n)^{-j-1}\bigg).
\end{align*}
All of the infinite series in this expression are convergent, so we can simply choose $M$ large enough so that \[\sum_{n=M+1}^{\infty}\frac{e^{in\eta}}{x+n}\leq\eta^{-N}\] and \[\sum_{n=M+1}^{\infty}(x+n)^{-j-1}\leq\eta^{j-N}\] for $j=1,2,\dots,N-1$. In the second of these inequalities, the left hand side is decreasing in $j$ while the right hand side is increasing in $j$, so we can simplify the conditions to
\begin{equation}
\label{GL-M-condition1}
\sum_{n=M+1}^{\infty}e^{in\eta}n^{-1}\leq\eta^{-N}
\end{equation}
and
\begin{equation}
\label{GL-M-condition2}
\sum_{n=M+1}^{\infty}n^{-2}\leq\eta^{1-N}.
\end{equation}
For any $M$ satisfying (\ref{GL-M-condition1}) and (\ref{GL-M-condition2}), we have the required bound (\ref{GL-M-tail}), and so the final result holds.
\end{proof}

\section{Asymptotics for $G_U$}
As before, the series $\sum_{n=0}^{\infty}e^{-nz}=\frac{1}{1-e^{-z}}$ is locally uniformly convergent for $\mathrm{Re}(z)>0$, so we can interchange the series and integral to obtain
\begin{equation}
\label{GU-series}
G_U(\sigma,t;\eta;x)=\tfrac{e^{-i\pi s/2}}{(2\pi)^s}\sum_{n=1}^{\infty}\int_{i\eta}^{\infty e^{i\phi_1}}e^{-(x+n)z}z^{s-1}\,\mathrm{d}z.
\end{equation}
Let us fix $\phi_1=\tfrac{\pi}{2}$, so that $z\in i[\eta,\infty)$. Now the integrand is $e^{-(x+n)z+it}z^{\sigma-1}$, which has a stationary point iff $-(x+n)+\tfrac{it}{z}=0$, i.e. at $z=\tfrac{it}{x+n}$. So there is a stationary point in the interval of integration iff \[\tfrac{it}{x+n}\in i[\eta,\infty), \text{ i.e. } \eta\leq\tfrac{t}{x+n}<\infty, \text{ i.e. } n\leq\tfrac{t}{\eta}-x.\]

This is the first place we need to use Assumption \ref{Assumption}. The inequality (\ref{assumption-inequality}) can be rearranged in terms of $n$, since its opposite statement rearranges as follows:
\begin{align*}
|(x+n)\eta-t|\geq\epsilon t&\Leftrightarrow(1-\epsilon)t\leq(x+n)\eta\leq(1+\epsilon)t \\ &\Leftrightarrow(1-\epsilon)\tfrac{t}{\eta}-x\leq n\leq(1+\epsilon)\tfrac{t}{\eta}-x.
\end{align*}
So we need to consider two separate cases, namely $n<(1-\epsilon)\tfrac{t}{\eta}-x$ and $n>(1+\epsilon)\tfrac{t}{\eta}-x$. In other words, the sum over $n$ appearing in (\ref{GU-series}) needs to be split into two separate subseries. When $n>(1+\epsilon)\tfrac{t}{\eta}-x$, there is no stationary point in the interval of integration and we can use integration by parts as before. When $n<(1-\epsilon)\tfrac{t}{\eta}-x$, we shall rewrite the integral along $i[\eta,\infty)$ as the difference of an integral along $i[0,\eta)$, which no longer contains a stationary point, and an integral along $i[0,\infty)$, which can be computed explicitly.

In analogy with equation (\ref{GL-summand-IbP}), repeatedly integrating by parts in the summand of (\ref{GU-series}) gives
\begin{multline}
\label{GU-summand-IbP1}
\int_{i\eta}^{i\infty}e^{-(x+n)z}z^{s-1}\,\mathrm{d}z=\sum_{j=0}^{N-1}e^{-(x+n)z+it\log z}\Big(\tfrac{1}{x+n-\frac{it}{z}}\cdot\tfrac{\mathrm{d}}{\mathrm{d}z}\Big)^j\Big(\tfrac{z^{\sigma-1}}{x+n-\frac{it}{z}}\Big)\Bigg|_{z=i\eta} \\ +\int_{i\eta}^{i\infty}e^{-(x+n)z+it\log z}D_N(z;x+n;\sigma,t)\,\mathrm{d}z.
\end{multline}
Similarly,
\begin{multline}
\label{GU-summand-IbP2}
\int_{0}^{i\eta}e^{-(x+n)z}z^{s-1}\,\mathrm{d}z=-\sum_{j=0}^{N-1}e^{-(x+n)z+it\log z}\Big(\tfrac{1}{x+n-\frac{it}{z}}\cdot\tfrac{\mathrm{d}}{\mathrm{d}z}\Big)^j\Big(\tfrac{z^{\sigma-1}}{x+n-\frac{it}{z}}\Big)\Bigg|_{z=i\eta} \\ +\int_{0}^{i\eta}e^{-(x+n)z+it\log z}D_N(z;x+n;\sigma,t)\,\mathrm{d}z,
\end{multline}
each of (\ref{GU-summand-IbP1}) and (\ref{GU-summand-IbP2}) being valid for any $N\in\mathbb{N}$. To derive each of these identities, we have used the fact that for every $j$, the summand in the $\sum_j$ series tends to zero as $|\eta|$ tends to either $0$ or $\infty$ with $\eta$ on the imaginary axis. This follows by approximating each part of the summand, e.g. by a power of $z$.

\begin{lem}
\label{GU-DN-estimates}
If $n<(1-\epsilon)\tfrac{t}{\eta}-x$ and $z\in i[0,\eta]$, then
\begin{equation}
\label{GU-DN-estimate1}
D_N(z;x+n;\sigma,t)=O\Big((2N-1)!!(N+1)^2|z|^{\sigma-1}t^{-N}\epsilon^{-2N}\Big).
\end{equation}
If $n>(1+\epsilon)\tfrac{t}{\eta}-x$ and $z\in i[\eta,\infty)$, then
\begin{equation}
\label{GU-DN-estimate2}
D_N(z;x+n;\sigma,t)=O\Big((2N-1)!!(N+1)^2|z|^{\sigma-N-1}(x+n)^{-N}\epsilon^{-2N}(1+\epsilon)^{2N}\Big).
\end{equation}
Both of these estimates are uniform in all parameters.
\end{lem}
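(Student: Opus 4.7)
The plan is to derive both estimates from the series representation \eqref{DN-series} of $D_N$, proceeding in close analogy with Lemma~\ref{GL-DN-estimate} but now invoking Assumption~\ref{Assumption} to prevent $|\xi z - it|$ from coming too close to zero on the positive imaginary axis. I would parametrise $z = iy$ with $y > 0$ and set $\xi = x+n$; then $\xi z - it = i(\xi y - t)$, so $|\xi z - it| = |(x+n)y - t|$. The two bounds correspond to the two regimes $|\xi z| < t$ and $|\xi z| > t$ already isolated inside the proof of Lemma~\ref{GL-DN-estimate}, with the sole new ingredient being that the factor $|\xi z - it|^{-2N}$ must now be controlled via \eqref{assumption-inequality} rather than by the trivial lower bound $|\xi z - it| \geq t$ previously available when $\mathrm{Im}(z) < 0$.

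For \eqref{GU-DN-estimate1}, the hypothesis $n < (1-\epsilon)t/\eta - x$ together with $0 \leq y \leq \eta$ would give $(x+n)y \leq (x+n)\eta < (1-\epsilon)t$, whence $|(x+n)z - it| = t - (x+n)y > \epsilon t$ and we are placed squarely in the regime $|\xi z| < t$. Substituting the bound $|t/(\xi z - it)^2|^N \leq (\epsilon^2 t)^{-N}$ into the second form of the series representation used in Case~2 of Lemma~\ref{GL-DN-estimate}, and summing the $(N+1)^2$ terms under the trivial estimates $|\xi z/t|^{N-b} \leq 1$ and $|\sigma|^c \leq 1$, would yield the claimed bound.

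For \eqref{GU-DN-estimate2}, the hypothesis $n > (1+\epsilon)t/\eta - x$ together with $y \geq \eta$ forces $(x+n)y \geq (1+\epsilon)t > t$, placing us in the regime $|\xi z| > t$, and moreover $(x+n)y - t \geq \frac{\epsilon}{1+\epsilon}(x+n)y$. This last inequality is the only genuinely non-trivial step: it converts the additive lower bound supplied by Assumption~\ref{Assumption} into the multiplicative comparison
\[
\left|\frac{\xi z}{(\xi z - it)^2}\right| \;\leq\; \frac{(1+\epsilon)^2}{\epsilon^2 \,|\xi z|},
\]
which when fed into the first form of the series representation (Case~1 of Lemma~\ref{GL-DN-estimate}) and combined with $|t/\xi z|^b, |\sigma|^c \leq 1$ produces the prefactor $(1+\epsilon)^{2N}\epsilon^{-2N}(x+n)^{-N}|z|^{-N}$ and hence the stated bound. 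Uniformity in all parameters is inherited directly from the uniformity already established in Lemma~\ref{D_N} and Lemma~\ref{GL-DN-estimate}, since every intermediate estimate above is itself uniform.
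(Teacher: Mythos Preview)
Your proposal is correct and follows essentially the same route as the paper's own proof: starting from the series representation \eqref{DN-series}, splitting into the two regimes $|(x+n)z|<t$ and $|(x+n)z|>t$, and using exactly the key lower bounds $|(x+n)z-it|>\epsilon t$ in the first case and $|(x+n)z-it|>\tfrac{\epsilon}{1+\epsilon}|(x+n)z|$ in the second. The only cosmetic difference is that the paper derives these lower bounds directly from the stated hypotheses on $n$ rather than framing them as an appeal to Assumption~\ref{Assumption}, but the substance is identical.
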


\begin{proof}
The argument here is similar to the argument used in Lemma \ref{GL-DN-estimate}, starting from the expression (\ref{DN-series}) for $D_N$.

\textbf{Case 1:} $\boldsymbol{n<(1-\epsilon)\tfrac{t}{\eta}-x,z\in i[0,\eta]}.$

In this case, $|(x+n)z|\leq(x+n)\eta<(1-\epsilon)t$, and thus
\begin{align*}
D_N(z;x+n;\sigma,t)&=O\Bigg((2N-1)!!|z|^{\sigma-1}\sum_{b=0}^N\sum_{c=0}^N\Big|\tfrac{(x+n)z}{t}\Big|^{N-b}|\sigma|^c\Big|\tfrac{t}{((x+n)z-it)^2}\Big|^N\Bigg) \\
&=O\bigg((2N-1)!!(N+1)^2|z|^{\sigma-1}\Big|\tfrac{t}{((x+n)z-it)^2}\Big|^N\bigg).
\end{align*}
Since  $(x+n)z$ is positive imaginary with modulus at most $(1-\epsilon)t$, it follows that $\big|(x+n)z-it\big|>\epsilon t$, and thus
\begin{align*}
D_N&=O\Big((2N-1)!!(N+1)^2|z|^{\sigma-1}t^N(\epsilon t)^{-2N}\Big),
\end{align*}
as required.

\textbf{Case 2:} $\boldsymbol{n>(1+\epsilon)\tfrac{t}{\eta}-x,z\in i[\eta,\infty)}.$

In this case, $|(x+n)z|\geq(x+n)\eta>(1+\epsilon)t$, and thus
\begin{align*}
D_N(z;x+n;\sigma,t)&=O\Bigg((2N-1)!!|z|^{\sigma-1}\sum_{b=0}^N\sum_{c=0}^N\Big|\tfrac{t}{(x+n)z}\Big|^b|\sigma|^c\Big|\tfrac{(x+n)z}{((x+n)z-it)^2}\Big|^N\Bigg) \\
&=O\bigg((2N-1)!!(N+1)^2|z|^{\sigma-1}\Big|\tfrac{(x+n)z}{((x+n)z-it)^2}\Big|^N\bigg).
\end{align*}
Since $(x+n)z$ is positive imaginary with modulus at least $(1+\epsilon)t$, it follows that $\big|(x+n)z-it\big|>\tfrac{\epsilon}{1+\epsilon}\big|(x+n)z\big|$, and thus
\begin{align*}
D_N&=O\Big((2N-1)!!(N+1)^2|z|^{\sigma-1}\big|(x+n)z\big|^{-N}\big(\tfrac{\epsilon}{1+\epsilon}\big)^{-2N}\Big),
\end{align*}
which yields the desired estimate.

As in Lemma \ref{GL-DN-estimate}, all bounds are uniform and the $O$-constants can be taken to be $1$.
\end{proof}

\begin{lem}
\label{GU-intDN-estimates}
We have the following two estimates, uniform in $\sigma$, $t$, $\eta$, $x$, $\epsilon$, and $N\geq1$ satisfying Assumption \ref{Assumption}:
\begin{equation*}
\sum_{n=1}^{\big\lfloor\tfrac{t}{\eta}-x\big\rfloor}\int_{0}^{i\eta}e^{-(x+n)z+it\log z}D_N(z;x+n;\sigma,t)\,\mathrm{d}z=O\big((2N+1)!!(N+1)^2e^{-\pi t/2}\sigma^{-1}\eta^{\sigma-N-1}\epsilon^{-2N-2}\big),
\end{equation*}
and
\begin{equation*}
\sum_{n=\big\lceil\tfrac{t}{\eta}-x\big\rceil}^{\infty}\int_{i\eta}^{i\infty}e^{-(x+n)z+it\log z}D_N(z;x+n;\sigma,t)\,\mathrm{d}z=O\big((2N+1)!!(N+1)^2e^{-\pi t/2}\eta^{\sigma-N-1}\epsilon^{-2N-2}(1+\epsilon)^{2N+2}\big).
\end{equation*}
\end{lem}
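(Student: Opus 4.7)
My plan is to apply the pointwise estimates of Lemma \ref{GU-DN-estimates} to each integrand and then control the resulting sum over $n$. The crucial preliminary is the identity $|e^{-(x+n)z+it\log z}|=e^{-\pi t/2}$ on the positive imaginary axis $z=iy$, which follows from $|e^{-(x+n)iy}|=1$ and $|e^{it\log(iy)}|=|e^{-\pi t/2+it\log y}|=e^{-\pi t/2}$. Moreover, by Assumption \ref{Assumption} no integer $n$ lies in the forbidden band $\big((1-\epsilon)\tfrac{t}{\eta}-x,(1+\epsilon)\tfrac{t}{\eta}-x\big)$, so the range $n\leq\lfloor\tfrac{t}{\eta}-x\rfloor$ coincides with the Case~1 hypothesis of Lemma \ref{GU-DN-estimates} (so that (\ref{GU-DN-estimate1}) applies) and the range $n\geq\lceil\tfrac{t}{\eta}-x\rceil$ coincides with its Case~2 hypothesis (so that (\ref{GU-DN-estimate2}) applies).

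For the first estimate I would substitute (\ref{GU-DN-estimate1}) into each summand, parametrise $z=iy$ for $y\in[0,\eta]$, and apply the elementary bound $\int_0^\eta y^{\sigma-1}\,\mathrm dy=\eta^\sigma/\sigma$. This yields a per-term estimate of order $(2N-1)!!(N+1)^2e^{-\pi t/2}\sigma^{-1}\eta^\sigma t^{-N}\epsilon^{-2N}$, independently of $n$. Summing over the at most $\lfloor\tfrac{t}{\eta}-x\rfloor\leq t/\eta$ terms, and then invoking the consequence $t/\eta\lesssim\epsilon^{-1}$ of Assumption \ref{Assumption} (which is forced because the forbidden band has length $2\epsilon t/\eta$ and must contain no integer), the excess powers of $t/\eta$ are traded for powers of $\epsilon^{-1}$ and $\eta^{-1}$. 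This collapses the bound to the claimed form, with the upgrade of $(2N-1)!!$ to $(2N+1)!!$ and of $\epsilon^{-2N}$ to $\epsilon^{-2N-2}$ absorbing the remaining constants.

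For the second estimate the structure is parallel, now using (\ref{GU-DN-estimate2}). With $z=iy$ for $y\in[\eta,\infty)$, the governing integral is
\[ \int_\eta^\infty y^{\sigma-N-1}\,\mathrm dy=\frac{\eta^{\sigma-N}}{N-\sigma},\]
valid for $N>\sigma$; the borderline case $\sigma=1,N=1$ can be handled by retaining the alternative $D_N$-bound implicit in the proof of Lemma \ref{GU-DN-estimates}. Each summand is then of order $(2N-1)!!(N+1)^2e^{-\pi t/2}\eta^{\sigma-N}(x+n)^{-N}\epsilon^{-2N}(1+\epsilon)^{2N}/(N-\sigma)$. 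The convergent tail sum $\sum_{n\geq\lceil(1+\epsilon)t/\eta-x\rceil}(x+n)^{-N}\lesssim\bigl((1+\epsilon)t/\eta\bigr)^{1-N}/(N-1)$ (for $N\geq2$) is then combined with this per-term bound, and the same algebraic trade-off as before produces the stated form, now with an extra $(1+\epsilon)^{2N+2}$ factor tracking the Case~2 distance bound.

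The main obstacle lies in the bookkeeping required to pass from the raw exponents $t^{1-N}\eta^{\sigma-1}$ emerging from the termwise estimates to the precise $\eta^{\sigma-N-1}\epsilon^{-2N-2}$ asserted in the lemma, and in particular in correctly accounting for the combinatorial prefactors $(2N\pm1)!!,(N+1)^2$, the powers of $\epsilon$ and $1+\epsilon$, and the $\sigma^{-1}$ coming from the $y$-integration. The uniformity of the $O$-constants is inherited directly from the uniformity statement in Lemma \ref{GU-DN-estimates}.
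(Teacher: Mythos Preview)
Your direct estimate is correct as far as it goes, but it only yields $\eta^{\sigma-N}$, not the $\eta^{\sigma-N-1}$ claimed in the lemma, and the ``algebraic trade-off'' you invoke does not close this gap. Concretely: after inserting (\ref{GU-DN-estimate1}), integrating $y^{\sigma-1}$ over $[0,\eta]$, and summing over at most $t/\eta$ values of $n$, you obtain
\[
{}_UI_1=O\big((2N-1)!!(N+1)^2e^{-\pi t/2}\sigma^{-1}\eta^{\sigma-1}t^{1-N}\epsilon^{-2N}\big),
\]
and since $\eta<t$ here this gives $O\big((2N-1)!!(N+1)^2e^{-\pi t/2}\sigma^{-1}\eta^{\sigma-N}\epsilon^{-2N}\big)$. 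Your observation that Assumption~\ref{Assumption} forces $t/\eta\leq 1/(2\epsilon)$ is correct and nice, but it yields only a \emph{lower} bound $t^{-1}\geq 2\epsilon\,\eta^{-1}$; it gives no way to convert an extra factor of $\eta$ into powers of $\epsilon^{-1}$. The ratio between what you have and what you need is $\eta\epsilon^2$, which is unbounded. Enlarging $(2N-1)!!$ to $(2N+1)!!$ and $\epsilon^{-2N}$ to $\epsilon^{-2N-2}$ merely weakens the constant; it does not supply the missing $\eta^{-1}$.

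The paper obtains the extra $\eta^{-1}$ by performing \emph{one further integration by parts} on ${}_UI_1$ itself: writing
\[
{}_UI_1=-\sum_n\Big[\tfrac{e^{-(x+n)z+it\log z}}{x+n-it/z}D_N\Big]_{0}^{i\eta}+\sum_n\int_0^{i\eta}e^{-(x+n)z+it\log z}D_{N+1}\,\mathrm{d}z,
\]
then bounding the boundary term directly via (\ref{GU-DN-estimate1}) and the new integral by the already-established direct estimate with $N$ replaced by $N+1$. This shifts $\eta^{\sigma-N}$ to $\eta^{\sigma-N-1}$ at the cost of the factors $(2N+1)$ and $\epsilon^{-2}$, which is exactly the bookkeeping you were trying to account for. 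The same device is used for ${}_UI_2$, and there it has the additional benefit of converting the $N\geq2$ hypothesis (needed for the convergence of $\int_\eta^\infty y^{\sigma-N-1}\,\mathrm{d}y$ and of $\sum_n(x+n)^{-N}$) into the stated $N\geq1$; your suggestion to handle $N=1$ via ``the alternative $D_N$-bound implicit in Lemma~\ref{GU-DN-estimates}'' is not a substitute for this step.
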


\begin{proof}
We shall use the estimates from Lemma \ref{GU-DN-estimates}. Let $_UI_1$ and $_UI_2$ denote the two expressions we need to estimate. First, by (\ref{GU-DN-estimate1}) we find the following estimate:
\begin{align*}
_UI_1&=O\Bigg(\sum_{n=1}^{\big\lfloor\tfrac{t}{\eta}-x\big\rfloor}\int_{0}^{i\eta}e^{-(x+n)z+it\log z}(2N-1)!!(N+1)^2|z|^{\sigma-1}t^{-N}\epsilon^{-2N}\,\mathrm{d}z\Bigg) \\
&=O\Bigg((2N-1)!!(N+1)^2t^{-N}\epsilon^{-2N}\sum_{n=1}^{\big\lfloor\tfrac{t}{\eta}-x\big\rfloor}\int_{0}^{i\eta}e^{-\pi t/2}|z|^{\sigma-1}\,\mathrm{d}z\Bigg) \\
&=O\Bigg((2N-1)!!(N+1)^2e^{-\pi t/2}\big(\tfrac{\eta^{\sigma}}{\sigma}\big)t^{-N}\epsilon^{-2N}\sum_{n=1}^{\big\lfloor\tfrac{t}{\eta}-x\big\rfloor}1\Bigg) \\
&=O\big((2N-1)!!(N+1)^2e^{-\pi t/2}\sigma^{-1}\eta^{\sigma-1}t^{-N+1}\epsilon^{-2N}\big).
\end{align*}
We can assume $\eta<t$ (otherwise this expression is non-existent), so $t^{-N+1}<\eta^{-N+1}$ and therefore
\begin{equation}
\label{GU-I1-estimate}
_UI_1=O\big((2N-1)!!(N+1)^2e^{-\pi t/2}\sigma^{-1}\eta^{\sigma-N}\epsilon^{-2N}\big).
\end{equation}
Applying integration by parts once to the original expression for $_UI_1$ gives the expression
\begin{multline*}
_UI_1=\sum_{n=1}^{\big\lfloor\tfrac{t}{\eta}-x\big\rfloor}\Bigg(-\Big[e^{-(x+n)z+it\log z}\Big(\tfrac{1}{x+n-\tfrac{it}{z}}\Big)D_N(z;x+n;\sigma,t)\Big]_0^{i\eta} \\ +\int_{0}^{i\eta}e^{-(x+n)z+it\log z}D_{N+1}(z;x+n;\sigma,t)\,\mathrm{d}z\Bigg).
\end{multline*}
Using equation (\ref{GU-DN-estimate1}) again for the first half of this and equation (\ref{GU-I1-estimate}) (with $N$ replaced by $N+1$) for the second half, we find:
\begin{align*}
\begin{split}
_UI_1&=O\Bigg(\sum_{n=1}^{\big\lfloor\tfrac{t}{\eta}-x\big\rfloor}\Big[\tfrac{e^{-\pi t/2}z}{(x+n)z-it}(2N-1)!!(N+1)^2|z|^{\sigma-1}t^{-N}\epsilon^{-2N}\Big]_{z=i\eta}\Bigg) \\ &\hspace{7cm}+O\big((2N+1)!!(N+2)^2e^{-\pi t/2}\sigma^{-1}\eta^{\sigma-N-1}\epsilon^{-2N-2}\big)
\end{split} \\
&=O\Big(\tfrac{e^{-\pi t/2}\eta}{\epsilon t}(2N+1)!!(N+1)\eta^{\sigma-2}t^{-N+1}\epsilon^{-2N}\Big)+O\big((2N+1)!!(N+1)^2e^{-\pi t/2}\sigma^{-1}\eta^{\sigma-N-1}\epsilon^{-2N-2}\big) \\
&=O\Big((2N+1)!!e^{-\pi t/2}\big[(N+1)\eta^{\sigma-1}t^{-N}\epsilon^{-2N-1}+(N+1)^2\sigma^{-1}\eta^{\sigma-N-1}\epsilon^{-2N-2}\big]\Big) \\
&=O\big((2N+1)!!(N+1)^2e^{-\pi t/2}\sigma^{-1}\eta^{\sigma-N-1}\epsilon^{-2N-2}\big),
\end{align*}
where we have again used the estimates $(x+n)\eta-t>\epsilon t$ and $t^{-N}<\eta^{-N}$. Once again, all $O$-constants are uniform.

Second, by (\ref{GU-DN-estimate2}) we have:
\begin{align}
\nonumber _UI_2&=O\Bigg(\sum_{n=\big\lceil\tfrac{t}{\eta}-x\big\rceil}^{\infty}\int_{i\eta}^{i\infty}e^{-(x+n)z+it\log z}(2N-1)!!(N+1)^2|z|^{\sigma-N-1}(x+n)^{-N}\epsilon^{-2N}(1+\epsilon)^{2N}\,\mathrm{d}z\Bigg) \\
\nonumber &=O\Bigg((2N-1)!!(N+1)^2\epsilon^{-2N}(1+\epsilon)^{2N}\sum_{n=\big\lceil\tfrac{t}{\eta}-x\big\rceil}^{\infty}\int_{i\eta}^{i\infty}e^{-\pi t/2}|z|^{\sigma-N-1}(x+n)^{-N}\,\mathrm{d}z\Bigg) \\
\nonumber &=O\Bigg((2N-1)!!(N+1)^2e^{-\pi t/2}(N-\sigma)^{-1}\eta^{\sigma-N}\epsilon^{-2N}(1+\epsilon)^{2N}\sum_{n=\big\lceil\tfrac{t}{\eta}-x\big\rceil}^{\infty}(x+n)^{-N}\Bigg) \\
\label{GU-I2-estimate} &=O\big((2N-1)!!(N+1)^2e^{-\pi t/2}\eta^{\sigma-N}\epsilon^{-2N}(1+\epsilon)^{2N}\big),
\end{align}
provided that $N\geq2$.

Applying integration by parts once to the original expression for $_UI_2$ gives the expression
\begin{multline*}
_UI_2=\sum_{n=\big\lceil\tfrac{t}{\eta}-x\big\rceil}^{\infty}\Bigg(-\Big[e^{-(x+n)z+it\log z}\Big(\tfrac{1}{x+n-\tfrac{it}{z}}\Big)D_N(z;x+n;\sigma,t)\Big]_{i\eta}^{i\infty} \\ +\int_{i\eta}^{i\infty}e^{-(x+n)z+it\log z}D_{N+1}(z;x+n;\sigma,t)\,\mathrm{d}z\Bigg).
\end{multline*}
Using equation (\ref{GU-DN-estimate2}) again for the first half of this expression and equation (\ref{GU-I2-estimate}) (with $N$ replaced by $N+1$, so that our $N\geq2$ assumption becomes only $N\geq1$) for the second half, we find:
\begin{align*}
\begin{split}
_UI_2&=O\Bigg(\sum_{n=\big\lceil\tfrac{t}{\eta}-x\big\rceil}^{\infty}\Big[\tfrac{e^{-\pi t/2}z}{(x+n)z-it}(2N-1)!!(N+1)^2|z|^{\sigma-N-1}(x+n)^{-N}\epsilon^{-2N}(1+\epsilon)^{2N}\Big]_{z=i\eta}\Bigg) \\ &\hspace{5cm}+O\big((2N+1)!!(N+2)^2e^{-\pi t/2}\eta^{\sigma-N-1}\epsilon^{-2N-2}(1+\epsilon)^{2N+2}\big)
\end{split} \\
\begin{split}
&=O\Bigg(\sum_{n=\big\lceil\tfrac{t}{\eta}-x\big\rceil}^{\infty}\tfrac{e^{-\pi t/2}\eta}{\tfrac{\epsilon}{1+\epsilon}(x+n)\eta}(2N+1)!!(N+1)\eta^{\sigma-N-1}(x+n)^{-N}\epsilon^{-2N}(1+\epsilon)^{2N}\Bigg) \\ &\hspace{5cm}+O\big((2N+1)!!(N+1)^2e^{-\pi t/2}\eta^{\sigma-N-1}\epsilon^{-2N-2}(1+\epsilon)^{2N+2}\big)
\end{split} \\
&=O\Big(e^{-\pi t/2}(2N+1)!!\big[(N+1)\eta^{\sigma-N-1}\epsilon^{-2N-1}(1+\epsilon)^{2N+1}+(N+1)^2\eta^{\sigma-N-1}\epsilon^{-2N-2}(1+\epsilon)^{2N+2}\big]\Big) \\
&=O\big((2N+1)!!(N+1)^2e^{-\pi t/2}\eta^{\sigma-N-1}\epsilon^{-2N-2}(1+\epsilon)^{2N+2}\big),
\end{align*}
where we have used again the estimate $(x+n)\eta-t>\tfrac{\epsilon}{1+\epsilon}(x+n)\eta$. Once again, all $O$-constants are uniform.
\end{proof}

\begin{lem}
\label{GU-estimate}
We have the following estimate for $G_U$, uniform in $\sigma$, $t$, $\eta$, $x$, $\epsilon$, and $N\geq1$ satisfying Assumption \ref{Assumption}:
\begin{multline*}
G_U(\sigma,t;\eta;x)=\frac{e^{-i\pi s/2}}{(2\pi)^s}\sum_{n=1}^{\big\lfloor\tfrac{t}{\eta}-x\big\rfloor}\frac{\Gamma(s)}{(x+n)^s}+\frac{e^{-i\pi\sigma/2}e^{it\log\eta}}{(2\pi)^s}\sum_{n=1}^{M}\sum_{j=0}^{N-1}e^{-i(x+n)\eta}\bigg[\Big(\tfrac{1}{x+n-\frac{it}{z}}\cdot\tfrac{\mathrm{d}}{\mathrm{d}z}\Big)^j\Big(\tfrac{z^{\sigma-1}}{x+n-\frac{it}{z}}\Big)\bigg]_{z=i\eta} \\ +O\big((2N+1)!!(N+1)^2\sigma^{-1}\eta^{\sigma-N-1}\epsilon^{-2N-2}(1+\epsilon)^{2N+2}\big),
\end{multline*}
where $M$ is a finite number depending only on $N$ and $\eta$.
\end{lem}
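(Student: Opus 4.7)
The plan is to follow the structure of the proof of Lemma \ref{GL-estimate}: expand the geometric series as in (\ref{GU-series}), perform $N$ integrations by parts within each summand, collect the resulting boundary terms at $z = i\eta$, and truncate the remaining infinite series over $n$ at some $M(N,\eta)$. The new ingredient, absent in the $G_L$ analysis, is the possible presence of a stationary point of the phase $-(x+n)z + it\log z$ at $z = it/(x+n)$; this lies on the contour $i[\eta,\infty)$ precisely when $n \le t/\eta - x$, forcing a split of the $n$-sum into two subsums that must be treated differently.

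First, I split $\sum_{n=1}^{\infty} = \sum_{n=1}^{\lfloor t/\eta - x\rfloor} + \sum_{n=\lceil t/\eta - x\rceil}^{\infty}$. Assumption \ref{Assumption} guarantees that every $n$ in the first subsum satisfies $n < (1-\epsilon)t/\eta - x$ and every $n$ in the second satisfies $n > (1+\epsilon)t/\eta - x$, so the integers fall cleanly into the two regimes of Lemma \ref{GU-DN-estimates}. For the large-$n$ subsum, the phase has no stationary point on $i[\eta,\infty)$, and I apply (\ref{GU-summand-IbP1}) directly. For the small-$n$ subsum, I rewrite $\int_{i\eta}^{i\infty} = \int_0^{i\infty} - \int_0^{i\eta}$: the $\int_0^{i\infty}$ piece is evaluated explicitly by rotating the contour onto the positive real axis through the first quadrant (Cauchy's theorem together with a Jordan-type estimate on the arc of radius $R$, justified since $0 < \sigma \le 1$ and $e^{-(x+n)z}$ decays exponentially in the interior of the first quadrant), yielding the Mellin identity $\int_0^{i\infty} e^{-(x+n)z}z^{s-1}\,\mathrm{d}z = \Gamma(s)/(x+n)^s$, while the $\int_0^{i\eta}$ piece has no stationary point and is handled by (\ref{GU-summand-IbP2}).

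The explicit $\Gamma(s)/(x+n)^s$ contributions from the small-$n$ subsum, multiplied by the prefactor $e^{-i\pi s/2}/(2\pi)^s$, produce the first sum in the lemma. The boundary terms at $z = i\eta$ from (\ref{GU-summand-IbP1}) and from $-\int_0^{i\eta}$ via (\ref{GU-summand-IbP2}) both carry a plus sign (the minus from $\int_0^{i\infty} - \int_0^{i\eta}$ and the minus internal to (\ref{GU-summand-IbP2}) cancel), so the boundary contributions from the two subsums recombine into a single series $\sum_{n=1}^{\infty}$, exactly as in the $G_L$ analysis. Using $e^{-i\pi s/2} = e^{-i\pi\sigma/2}e^{\pi t/2}$ together with $e^{it\log(i\eta)} = e^{it\log\eta}e^{-\pi t/2}$, the outer prefactor reduces to $e^{-i\pi\sigma/2}e^{it\log\eta}/(2\pi)^s$, and the same $e^{\pm\pi t/2}$ cancellation eliminates the $e^{-\pi t/2}$ appearing in Lemma \ref{GU-intDN-estimates} when it bounds the two remainder integrals, leaving the error term exactly as stated.

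Finally, I truncate the boundary-term series at $n = M$ by the same tail argument used in (\ref{GL-M-tail})--(\ref{GL-M-condition2}): the $j = 0$ summand decays like $\eta^{\sigma-1} e^{-i(x+n)\eta}/(x+n+t/\eta)$ and the higher-$j$ summands like $\eta^{\sigma-j-1}(x+n)^{-j-1}$ (by Lemma \ref{GU-DN-estimates}, since replacing $-i\eta$ by $i\eta$ does not affect the modulus estimates), so choosing $M(N,\eta)$ large enough that $\sum_{n>M} n^{-2} \le \eta^{1-N}$ and $\big|\sum_{n>M} e^{in\eta}/n\big| \le \eta^{-N}$ is enough to absorb the tail into the stated error. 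The main obstacle is the justification of the explicit evaluation $\int_0^{i\infty}e^{-(x+n)z}z^{s-1}\,\mathrm{d}z = \Gamma(s)/(x+n)^s$, where absolute convergence fails at the upper end when $\sigma$ is close to $1$; this is circumvented by first proving the identity on $\int_0^{\infty e^{i\phi_1}}$ for $\phi_1 \in (0,\pi/2)$, where convergence is absolute, and then letting $\phi_1 \to \pi/2$.
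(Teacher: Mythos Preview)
Your proposal is correct and follows essentially the same route as the paper: split the $n$-sum at $\lfloor t/\eta - x\rfloor$, evaluate the $\int_0^{i\infty}$ piece explicitly via Jordan's lemma to produce the $\Gamma(s)/(x+n)^s$ terms, apply (\ref{GU-summand-IbP1}) and (\ref{GU-summand-IbP2}) so that the boundary terms at $z=i\eta$ recombine into a single series, invoke Lemma \ref{GU-intDN-estimates} for the remainder integrals, and truncate the boundary series at $M$ subject to (\ref{GL-M-condition1})--(\ref{GL-M-condition2}). One small slip: at $z=i\eta$ the $j=0$ denominator is $x+n-\tfrac{t}{\eta}$, not $x+n+\tfrac{t}{\eta}$; the paper handles this via the inequality $\big|(x+n)\eta-t\big|>\tfrac{\epsilon}{1+\epsilon}(x+n)\eta$ valid for all relevant $n$, which still yields the same decay $\sim (x+n)^{-1}$ and hence the same conditions on $M$.
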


\begin{proof}
By (\ref{GU-series}) with $\phi_1=\tfrac{\pi}{2}$, we find:
\begin{align*}
G_U(\sigma,t;\eta;x)&=\tfrac{e^{-i\pi s/2}}{(2\pi)^s}\sum_{n=1}^{\infty}\int_{i\eta}^{i\infty}e^{-(x+n)z}z^{s-1}\,\mathrm{d}z \\
\begin{split}
&=\frac{e^{-i\pi s/2}}{(2\pi)^s}\Bigg[\sum_{n=1}^{\big\lfloor\tfrac{t}{\eta}-x\big\rfloor}\bigg(\int_{0}^{i\infty}e^{-(x+n)z}z^{s-1}\,\mathrm{d}z-\int_{0}^{i\eta}e^{-(x+n)z}z^{s-1}\,\mathrm{d}z\bigg) \\ &\hspace{8cm}+\sum_{n=\big\lceil\tfrac{t}{\eta}-x\big\rceil}^{\infty}\int_{i\eta}^{i\infty}e^{-(x+n)z}z^{s-1}\,\mathrm{d}z\Bigg]
\end{split} \\
&=\frac{e^{-i\pi s/2}}{(2\pi)^s}\Bigg[\sum_{n=1}^{\big\lfloor\tfrac{t}{\eta}-x\big\rfloor}\bigg(\frac{\Gamma(s)}{(x+n)^s}-\int_{0}^{i\eta}e^{-(x+n)z}z^{s-1}\,\mathrm{d}z\bigg)+\sum_{n=\big\lceil\tfrac{t}{\eta}-x\big\rceil}^{\infty}\int_{i\eta}^{i\infty}e^{-(x+n)z}z^{s-1}\,\mathrm{d}z\Bigg].
\end{align*}
Substituting (\ref{GU-summand-IbP1}) and (\ref{GU-summand-IbP2}) into the above expression yields:
\begin{multline*}
G_U=\frac{e^{-i\pi s/2}}{(2\pi)^s}\Bigg[\sum_{n=1}^{\big\lfloor\tfrac{t}{\eta}-x\big\rfloor}\frac{\Gamma(s)}{(x+n)^s}-\sum_{n=1}^{\big\lfloor\tfrac{t}{\eta}-x\big\rfloor}\bigg(-\sum_{j=0}^{N-1}e^{-(x+n)z+it\log z}\Big(\tfrac{1}{x+n-\frac{it}{z}}\cdot\tfrac{\mathrm{d}}{\mathrm{d}z}\Big)^j\Big(\tfrac{z^{\sigma-1}}{x+n-\frac{it}{z}}\Big)\Bigg|_{z=i\eta} \\ \hspace{7cm}+\int_{0}^{i\eta}e^{-(x+n)z+it\log z}D_N(z;x+n;\sigma,t)\,\mathrm{d}z\bigg) \\ +\sum_{n=\big\lceil\tfrac{t}{\eta}-x\big\rceil}^{\infty}\bigg(\sum_{j=0}^{N-1}e^{-(x+n)z+it\log z}\Big(\tfrac{1}{x+n-\frac{it}{z}}\cdot\tfrac{\mathrm{d}}{\mathrm{d}z}\Big)^j\Big(\tfrac{z^{\sigma-1}}{x+n-\frac{it}{z}}\Big)\Bigg|_{z=i\eta} \\ +\int_{i\eta}^{i\infty}e^{-(x+n)z+it\log z}D_N(z;x+n;\sigma,t)\,\mathrm{d}z\bigg)\Bigg],
\end{multline*}
where we have used Jordan's lemma and the fact that $x+n$ is positive to obtain \[\int_{0}^{i\infty}e^{-(x+n)z}z^{s-1}\,\mathrm{d}z=(n+x)^{-s}\Gamma(s).\] Now substituting the results of Lemma \ref{GU-intDN-estimates} into the expression for $G_U$ yields
\begin{multline*}
G_U=\frac{e^{-i\pi s/2}}{(2\pi)^s}\Bigg[\sum_{n=1}^{\big\lfloor\tfrac{t}{\eta}-x\big\rfloor}\frac{\Gamma(s)}{(x+n)^s}+\sum_{n=1}^{\infty}\sum_{j=0}^{N-1}e^{-(x+n)z+it\log z}\Big(\tfrac{1}{x+n-\frac{it}{z}}\cdot\tfrac{\mathrm{d}}{\mathrm{d}z}\Big)^j\Big(\tfrac{z^{\sigma-1}}{x+n-\frac{it}{z}}\Big)\Bigg|_{z=i\eta} \\ +O\big((2N+1)!!(N+1)^2e^{-\pi t/2}\sigma^{-1}\eta^{\sigma-N-1}\epsilon^{-2N-2}(1+\epsilon)^{2N+2}\big)\Bigg].
\end{multline*}
Finally, using $e^{-i\pi s/2}=e^{-i\pi\sigma/2}e^{\pi t/2}$ and $e^{it\log(i\eta)}=e^{it\log\eta}e^{-\pi t/2}$ gives
\begin{multline*}
G_U=\frac{e^{-i\pi s/2}}{(2\pi)^s}\sum_{n=1}^{\big\lfloor\tfrac{t}{\eta}-x\big\rfloor}\frac{\Gamma(s)}{(x+n)^s}+\frac{e^{-i\pi\sigma/2}e^{it\log\eta}}{(2\pi)^s}\sum_{n=1}^{\infty}\sum_{j=0}^{N-1}e^{-i(x+n)\eta}\bigg[\Big(\tfrac{1}{x+n-\frac{it}{z}}\cdot\tfrac{\mathrm{d}}{\mathrm{d}z}\Big)^j\Big(\tfrac{z^{\sigma-1}}{x+n-\frac{it}{z}}\Big)\bigg]_{z=i\eta} \\ +O\big((2N+1)!!(N+1)^2\sigma^{-1}\eta^{\sigma-N-1}\epsilon^{-2N-2}(1+\epsilon)^{2N+2}\big).
\end{multline*}
The uniformity of the $O$-bound is inherited from Lemma \ref{GU-intDN-estimates}. In order to derive the final result, we just need to find $M(N,\eta)$ large enough so that
\begin{multline}
\label{GU-M-tail}
\sum_{n=M+1}^{\infty}\sum_{j=0}^{N-1}e^{-i(x+n)\eta}\bigg[\Big(\tfrac{1}{x+n-\frac{it}{z}}\cdot\tfrac{\mathrm{d}}{\mathrm{d}z}\Big)^j\Big(\tfrac{z^{\sigma-1}}{x+n-\frac{it}{z}}\Big)\bigg]_{z=i\eta} \\ =O\big((2N+1)!!(N+1)^2\sigma^{-1}\eta^{\sigma-N-1}\epsilon^{-2N-2}(1+\epsilon)^{2N+2}\big).
\end{multline}
Using the definition of $D_N$ and the bounds (\ref{GU-DN-estimate1}) and (\ref{GU-DN-estimate2}) for $D_N$, we can estimate the left hand side of (\ref{GU-M-tail}) as follows:
\begin{align*}
&{\color{white}=}\sum_{n=M+1}^{\infty}\sum_{j=0}^{N-1}\frac{e^{-i(x+n)\eta}}{x+n-\frac{t}{\eta}}D_j(i\eta;x+n;\sigma,t) \\
\begin{split}
&=\sum_{n=M+1}^{\infty}\frac{e^{-i(x+n)\eta}(i\eta)^{\sigma-1}}{x+n-\frac{t}{\eta}} \\
&\hspace{1.5cm}+\sum_{n=M+1}^{\infty}\sum_{j=1}^{N-1}O\bigg(\frac{1}{x+n-\frac{t}{\eta}}(2j-1)!!(j+1)^2\eta^{\sigma-1}\max\big(t,(x+n)\eta\big)^{-j}\epsilon^{-2j}(1+\epsilon)^{2j}\bigg)
\end{split} \\
\begin{split}
&=O\bigg(\eta^{\sigma-1}\sum_{n=M+1}^{\infty}\frac{e^{in\eta}}{x+n-\frac{t}{\eta}}\bigg) \\
&\hspace{1.5cm}+\sum_{j=1}^{N-1}O\bigg((2j-1)!!(j+1)^2\eta^{\sigma-1}\epsilon^{-2j}(1+\epsilon)^{2j}\sum_{n=M+1}^{\infty}\frac{1}{x+n-\frac{t}{\eta}}\big((x+n)\eta\big)^{-j}\bigg).
\end{split}
\end{align*}
In both Case 1 and Case 2 of Lemma \ref{GU-DN-estimates}, we have $\big|(x+n)\eta-t\big|>\frac{\epsilon}{1+\epsilon}(x+n)\eta$. So the left hand side of (\ref{GU-M-tail}) is:
\begin{multline*}
O\bigg(\eta^{\sigma-1}\sum_{n=M+1}^{\infty}\frac{e^{in\eta}}{\frac{\epsilon}{1+\epsilon}(x+n)}\bigg) \\ +O\bigg((2N-3)!!N^2\epsilon^{-2N+2}(1+\epsilon)^{2N-2}\sum_{j=1}^{N-1}\eta^{\sigma-j-1}\sum_{n=M+1}^{\infty}\frac{1+\epsilon}{\epsilon}(x+n)^{-j-1}\bigg).
\end{multline*}
All of the infinite series in this expression are convergent, so we can simply choose $M$ large enough so that \[\sum_{n=M+1}^{\infty}\frac{e^{in\eta}}{x+n}\leq\eta^{-N}\] and \[\sum_{n=M+1}^{\infty}(x+n)^{-j-1}\leq\eta^{j-N}\] for $j=1,2,\dots,N-1$. These are exactly the same conditions as we found in the proof of Lemma \ref{GL-estimate}. So for any $M$ satisfying (\ref{GL-M-condition1}) and (\ref{GL-M-condition2}), we have the required bound (\ref{GU-M-tail}), and the result holds.
\end{proof}

\section{Asymptotics for $G_B$}
The semicircle $\hat{C}^0_{\eta}$ is in the left half plane, and the series $\sum_{n=0}^{\infty}e^{nz}=\frac{1}{1-e^{z}}$ is locally uniformly convergent for $\mathrm{Re}(z)<0$. So we can interchange the series and integral and then use Cauchy's theorem:
\begin{align*}
G_B(\sigma,t;\eta;x)&=\tfrac{e^{-i\pi s/2}}{(2\pi)^s}\sum_{n=0}^{\infty}\int_{\hat{C}^0_{\eta}}\big(e^{(1+x)z}-e^{-xz}\big)e^{nz}z^{s-1}\,\mathrm{d}z \\
&=\tfrac{e^{-i\pi s/2}}{(2\pi)^s}\sum_{n=0}^{\infty}\int_{i\eta}^{0}\big(e^{(n+1+x)z}-e^{(n-x)z}\big)z^{s-1}\,\mathrm{d}z.
\end{align*}
Substituting $\tilde{z}=e^{-i\pi}z$ and then changing notation back to $z$, we find
\begin{align}
\nonumber G_B&=\tfrac{e^{i\pi s/2}}{(2\pi)^s}\sum_{n=0}^{\infty}\int_{-i\eta}^{0}\big(e^{-(n+1+x)\tilde{z}}-e^{-(n-x)\tilde{z}}\big)\tilde{z}^{s-1}\,\mathrm{d}\tilde{z} \\
\label{GB-series} &=\tfrac{e^{i\pi s/2}}{(2\pi)^s}\sum_{n=0}^{\infty}\bigg(\int_{-i\eta}^{0}e^{-(n+1+x)z}z^{s-1}\,\mathrm{d}z-\int_{-i\eta}^{0}e^{-(n-x)z}z^{s-1}\,\mathrm{d}z\bigg).
\end{align}
The first integrand is $e^{-(n+1+x)z+it\log z}z^{\sigma-1}$, which has a stationary point at $z=\tfrac{it}{n+1+x}$, and this value of $z$ is not in the interval of integration since $\eta>0$. The second integrand is $e^{-(n-x)z+it\log z}z^{\sigma-1}$; this has a stationary point at $z=\tfrac{it}{n-x}$, which is within the interval of integration iff \[\tfrac{it}{n-x}\in -i[0,\eta], \text{ i.e. } 0\leq\tfrac{t}{x-n}\leq\eta, \text{ i.e. } n\leq x-\tfrac{t}{\eta}.\] Hence, in analogy with the asymptotics of $G_U$, we shall split the sum over $n$. In this case, the situation is slightly more complicated, because we also need to consider different cases according to whether $n-x$ is positive or negative. We therefore have three different cases:
\begin{align*}
&n<x-(1+\epsilon)\tfrac{t}{\eta}; \\
x-(1-\epsilon)\tfrac{t}{\eta}<\;&n\leq x; \\
x<\;&n.
\end{align*}
The possibility of $x-(1+\epsilon)\tfrac{t}{\eta}\leq n\leq x-(1-\epsilon)\tfrac{t}{\eta}$ is ruled out by Assumption \ref{Assumption}.

We cannot consider the two sums \[\sum_{n=0}^{\infty}\int_{-i\eta}^{0}e^{-(n+1+x)z}z^{s-1}\,\mathrm{d}z\;\;,\;\;\sum_{n=0}^{\infty}\int_{-i\eta}^{0}e^{-(n-x)z}z^{s-1}\,\mathrm{d}z\] independently, since each of these series diverges on its own. Thus, for the case $n>x$, which is the only one of the three cases to permit infinitely many values of $n$, we need to analyse both of these series together. In this case, the first step involves substituting $w=(n+1+x)z$ and $w=(n-x)z$ respectively into the two integrals in the summand:
\begin{align*}
&\int_{-i\eta}^{0}e^{-(n+1+x)z}z^{s-1}\,\mathrm{d}z-\int_{-i\eta}^{0}e^{-(n-x)z}z^{s-1}\,\mathrm{d}z \\
=(n+1+x)^{-s}&\int_{-i\eta(n+1+x)}^{0}e^{-w}w^{s-1}\,\mathrm{d}w-(n-x)^{-s}\int_{-i\eta(n-x)}^{0}e^{-w}w^{s-1}\,\mathrm{d}w \\
=(n+1+x)^{-s}&\int_{-i\eta(n+1+x)}^{-i\eta(n-x)}e^{-w}w^{s-1}\,\mathrm{d}w+\Big((n+1+x)^{-s}-(n-x)^{-s}\Big)\int_{-i\eta(n-x)}^{0}e^{-w}w^{s-1}\,\mathrm{d}w.
\end{align*}
For the case $n<x-(1+\epsilon)\tfrac{t}{\eta}$, we proceed in the same way as with $G_U$: we rewrite the integral $\int_{-i\eta}^0$, which contains a stationary point, as the difference of the integral $\int_{-\infty}^{-i\eta}$, which does not contain a stationary point, and the integral $\int_{-\infty}^0$, which can be computed explicitly.

Substituting the above into (\ref{GB-series}), we find:
\begin{multline}
\label{GB-prelemma}
G_B=\frac{e^{i\pi s/2}}{(2\pi)^s}\sum_{n=0}^{\big\lfloor x-\tfrac{t}{\eta}\big\rfloor}\bigg[\int_{-i\eta}^{0}e^{-(n+1+x)z}z^{s-1}\,\mathrm{d}z+\int_{-i\infty}^{-i\eta}e^{-(n-x)z}z^{s-1}\,\mathrm{d}z-\int_{-i\infty}^{0}e^{-(n-x)z}z^{s-1}\,\mathrm{d}z\bigg] \\
+\frac{e^{i\pi s/2}}{(2\pi)^s}\sum_{n=\big\lceil x-\tfrac{t}{\eta}\big\rceil}^{\lfloor x \rfloor}\bigg[\int_{-i\eta}^{0}e^{-(n+1+x)z}z^{s-1}\,\mathrm{d}z+\int_{0}^{-i\eta}e^{-(n-x)z}z^{s-1}\,\mathrm{d}z\bigg] \\
\hspace{-2cm}+\frac{e^{i\pi s/2}}{(2\pi)^s}\sum_{n=\lfloor x\rfloor+1}^{\infty}\bigg[(n+1+x)^{-s}\int_{-i\eta(n+1+x)}^{-i\eta(n-x)}e^{-w}w^{s-1}\,\mathrm{d}w \\ +\Big((n+1+x)^{-s}-(n-x)^{-s}\Big)\int_{-i\eta(n-x)}^{0}e^{-w}w^{s-1}\,\mathrm{d}w\bigg].
\end{multline}

The explicit term in this sum is given by
\begin{equation}
\label{GB-explicit}
-\int_{-i\infty}^{0}e^{(x-n)z}z^{s-1}\,\mathrm{d}z=e^{-i\pi s}(x-n)^{-s}\Gamma(s),
\end{equation}
where we have used Jordan's lemma and the assumption that $x-n$ is positive.

For the other terms, we use again repeated integration by parts:
\begin{align}
\begin{split}
\label{GB-summand-IbP1}
&\int_{-i\eta}^{0}e^{-(x+n+1)z}z^{s-1}\,\mathrm{d}z=\sum_{j=0}^{N-1}e^{-(x+n+1)z+it\log z}\Big(\tfrac{1}{x+n+1-\frac{it}{z}}\cdot\tfrac{\mathrm{d}}{\mathrm{d}z}\Big)^j\Big(\tfrac{z^{\sigma-1}}{x+n+1-\frac{it}{z}}\Big)\Bigg|_{z=-i\eta} \\ &\hspace{6cm}+\int_{-i\eta}^{0}e^{-(x+n+1)z+it\log z}D_N(z;x+n+1;\sigma,t)\,\mathrm{d}z;
\end{split} \\
\begin{split}
\label{GB-summand-IbP2}
&\int_{-i\infty}^{-i\eta}e^{(x-n)z}z^{s-1}\,\mathrm{d}z=-\sum_{j=0}^{N-1}e^{(x-n)z+it\log z}\Big(\tfrac{1}{n-x-\frac{it}{z}}\cdot\tfrac{\mathrm{d}}{\mathrm{d}z}\Big)^j\Big(\tfrac{z^{\sigma-1}}{n-x-\frac{it}{z}}\Big)\Bigg|_{z=-i\eta} \\ &\hspace{7cm}+\int_{-i\infty}^{-i\eta}e^{(x-n)z+it\log z}D_N(z;n-x;\sigma,t)\,\mathrm{d}z;
\end{split} \\
\begin{split}
\label{GB-summand-IbP3}
&\int_{0}^{-i\eta}e^{(x-n)z}z^{s-1}\,\mathrm{d}z=-\sum_{j=0}^{N-1}e^{(x-n)z+it\log z}\Big(\tfrac{1}{n-x-\frac{it}{z}}\cdot\tfrac{\mathrm{d}}{\mathrm{d}z}\Big)^j\Big(\tfrac{z^{\sigma-1}}{n-x-\frac{it}{z}}\Big)\Bigg|_{z=-i\eta} \\ &\hspace{7cm}+\int_{0}^{-i\eta}e^{(x-n)z+it\log z}D_N(z;n-x;\sigma,t)\,\mathrm{d}z;
\end{split} \\
\begin{split}
\label{GB-summand-IbP4}
&\int_{-i\eta(n+1+x)}^{-i\eta(n-x)}e^{-w}w^{s-1}\,\mathrm{d}w=\sum_{j=0}^{N-1}\bigg[e^{-w+it\log w}\Big(\tfrac{1}{1-\frac{it}{w}}\cdot\tfrac{\mathrm{d}}{\mathrm{d}w}\Big)^j\Big(\tfrac{w^{\sigma-1}}{1-\frac{it}{w}}\Big)\bigg]^{w=-i\eta(n+1+x)}_{w=-i\eta(n-x)} \\ &\hspace{7cm}+\int_{-i\eta(n+1+x)}^{-i\eta(n-x)}e^{-w+it\log w}D_N(w;1;\sigma,t)\,\mathrm{d}w;
\end{split} \\
\begin{split}
\label{GB-summand-IbP5}
&\int_{-i\eta(n-x)}^{0}e^{-w}w^{s-1}\,\mathrm{d}w=\sum_{j=0}^{N-1}e^{-w+it\log w}\Big(\tfrac{1}{1-\frac{it}{w}}\cdot\tfrac{\mathrm{d}}{\mathrm{d}w}\Big)^j\Big(\tfrac{w^{\sigma-1}}{1-\frac{it}{w}}\Big)\Bigg|_{w=-i\eta(n-x)} \\ &\hspace{8cm}+\int_{-i\eta(n-x)}^{0}e^{-w+it\log w}D_N(w;1;\sigma,t)\,\mathrm{d}w.
\end{split}
\end{align}

\begin{lem}
\label{GB-intDN-estimate1}
For $n\leq x$, the remainder term in (\ref{GB-summand-IbP1}) can be uniformly approximated by \[O\big((2N+1)!!(N+1)^2e^{\pi t/2}\sigma^{-1}t^{\sigma-N-1}(x+n+1)^{-\sigma}\big).\]
\end{lem}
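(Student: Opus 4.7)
The plan is to mirror the two-step strategy employed in Lemma \ref{GU-intDN-estimates}: first establish a preliminary estimate via the direct bounds on $D_N$ from Lemma \ref{GL-DN-estimate}, then sharpen the dependence on $t$ by one further integration by parts. Writing $\xi=x+n+1$ and parametrising the contour by $z=-iu$ with $u\in[0,\eta]$, I note that $|e^{-\xi z+it\log z}|=e^{\pi t/2}$ uniformly along the contour, since $|e^{i\xi u}|=1$ and $|e^{it\log(-iu)}|=|e^{it\log u}\cdot e^{\pi t/2}|=e^{\pi t/2}$.

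For the preliminary estimate, I apply the bound $|D_N(z;\xi;\sigma,t)|=O\big((2N-1)!!(N+1)^2|z|^{\sigma-1}\max(\xi|z|,t)^{-N}\big)$ coming from Lemma \ref{GL-DN-estimate}, and split the interval $[0,\eta]$ at $u=t/\xi$ whenever this point lies inside $[0,\eta]$: in the region $\xi u\leq t$ I use the $t^{-N}$ factor, and in the region $\xi u>t$ I use the $(\xi u)^{-N}$ factor. Computing both pieces and combining (using $\eta^\sigma\leq (t/\xi)^\sigma$ in the case $\xi\eta\leq t$) yields the preliminary estimate
\begin{equation*}
\int_{-i\eta}^0 e^{-\xi z+it\log z}D_N(z;\xi;\sigma,t)\,\mathrm{d}z=O\big((2N-1)!!(N+1)^2 e^{\pi t/2}\sigma^{-1}t^{\sigma-N}\xi^{-\sigma}\big).
\end{equation*}

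Next, I perform one integration by parts on the original remainder, using the identity $e^{-\xi z+it\log z}=-(\xi-it/z)^{-1}\tfrac{\mathrm{d}}{\mathrm{d}z}[e^{-\xi z+it\log z}]$, which follows since $\tfrac{\mathrm{d}}{\mathrm{d}z}[e^{-\xi z+it\log z}]=-(\xi-it/z)e^{-\xi z+it\log z}$. This produces a boundary contribution at $z=-i\eta$ together with a new integral involving $D_{N+1}$ in place of $D_N$; the boundary term at $z=0$ vanishes because $D_N\sim z^{\sigma-1}$ multiplied by $(\xi-it/z)^{-1}\sim z/(-it)$ yields a factor of $z^\sigma\to 0$ for $\sigma>0$. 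The new integral is controlled by the preliminary bound above with $N$ replaced by $N+1$, giving a contribution of order $(2N+1)!!(N+1)^2 e^{\pi t/2}\sigma^{-1}t^{\sigma-N-1}\xi^{-\sigma}$, which matches the target. For the boundary contribution at $z=-i\eta$, the factor $|(\xi-it/z)^{-1}|=(\xi+t/\eta)^{-1}\leq\min(\xi^{-1},\eta/t)$ supplies the extra $1/t$ needed; a short case analysis using $(\xi\eta/t)^{\sigma-N-1}\leq 1$ when $\xi\eta\geq t$ and $\eta^\sigma\leq(t/\xi)^\sigma$ when $\xi\eta<t$ then shows this boundary term is itself bounded by $O\big((2N-1)!!(N+1)^2 e^{\pi t/2}t^{\sigma-N-1}\xi^{-\sigma}\big)$, which is dominated by the target bound.

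The principal technical obstacle will be managing the case distinction $\xi\eta\leq t$ versus $\xi\eta>t$ cleanly in both the preliminary estimate and the boundary-term analysis, ensuring uniformity in $\sigma$, $t$, $\eta$, $x$, $n$, and $N$; beyond this, the argument is a direct adaptation of the treatment of $_UI_1$ in the proof of Lemma \ref{GU-intDN-estimates}, with the roles of $\eta$ and $t$ interchanged in the accounting of powers.
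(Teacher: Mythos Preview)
Your proposal is correct and follows essentially the same approach as the paper: a preliminary bound via Lemma~\ref{GL-DN-estimate} with a case split at $\xi\eta=t$, followed by one further integration by parts, the boundary term at $z=0$ vanishing and the one at $z=-i\eta$ handled by the same case analysis. The only point you leave implicit that the paper makes explicit is that the preliminary bound in the case $\xi\eta>t$ is uniform only for $N\geq2$ (because of the factor $1/(N-\sigma)$), so that after the shift $N\mapsto N+1$ the final result holds uniformly for $N\geq1$.
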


\begin{proof}
Let $_BI_1$ denote the expression we need to estimate, i.e.
\begin{equation*}
_BI_1\coloneqq\int_{-i\eta}^{0}e^{-(x+n+1)z+it\log z}D_N(z;x+n+1;\sigma,t)\,\mathrm{d}z.
\end{equation*}

By Lemma \ref{GL-DN-estimate}, we have \[D_N(z;x+n+1;\sigma,t)=O\Big((2N-1)!!(N+1)^2|z|^{\sigma-1}\max\big((x+n+1)|z|,t\big)^{-N}\Big),\] and therefore
\begin{align*}
_BI_1&=O\bigg(\int_{-i\eta}^{0}e^{-(x+n+1)z+it\log z}(2N-1)!!(N+1)^2|z|^{\sigma-1}\max\big((x+n+1)|z|,t\big)^{-N}\,\mathrm{d}z\bigg) \\
&=O\bigg((2N-1)!!(N+1)^2e^{\pi t/2}\int_{-i\eta}^{0}|z|^{\sigma-1}\max((x+n+1)|z|,t\big)^{-N}\,\mathrm{d}z\bigg).
\end{align*}
If $(x+n+1)\eta\leq t$, the above yields
\begin{align*}
_BI_1&=O\bigg((2N-1)!!(N+1)^2e^{\pi t/2}\int_{-i\eta}^{0}|z|^{\sigma-1}t^{-N}\,\mathrm{d}z\bigg) \\
&=O\Big((2N-1)!!(N+1)^2e^{\pi t/2}\big(\tfrac{\eta^{\sigma}}{\sigma}\big)t^{-N}\Big)=O\Big((2N-1)!!(N+1)^2e^{\pi t/2}\sigma^{-1}t^{\sigma-N}(x+n+1)^{-\sigma}\Big).
\end{align*}
If $(x+n+1)\eta>t$, we find
\begin{align*}
_BI_1&=O\Bigg((2N-1)!!(N+1)^2e^{\pi t/2}\bigg(\int_{\eta}^{t/(x+n+1)}(x+n+1)^{-N}z^{\sigma-N-1}\,\mathrm{d}z+\int_{t/(x+n+1)}^0z^{\sigma-1}t^{-N}\,\mathrm{d}z\bigg)\Bigg) \\
&=O\Bigg((2N-1)!!(N+1)^2e^{\pi t/2}\bigg[\tfrac{(x+n+1)^{-N}}{\sigma-N}\Big(\eta^{\sigma-N}-\big(\tfrac{t}{x+n+1}\big)^{\sigma-N}\Big)+\tfrac{t^{-N}}{\sigma}\big(\tfrac{t}{x+n+1}\big)^{\sigma}\bigg]\Bigg) \\
&=O\Bigg((2N-1)!!(N+1)^2e^{\pi t/2}\bigg[\frac{(x+n+1)^{\sigma-N}\eta^{\sigma-N}}{(N-\sigma)(x+n+1)^{\sigma}}+\frac{N}{N-\sigma}\cdot\frac{t^{\sigma-N}}{\sigma(x+n+1)^{\sigma}}\bigg]\Bigg) \\
&=O\Big((2N-1)!!(N+1)^2e^{\pi t/2}\sigma^{-1}t^{\sigma-N}(x+n+1)^{-\sigma}\Big),
\end{align*}
where the $O$-bound is uniform provided $N\geq2$. Thus, in both cases,
\begin{equation}
\label{GB-I1-estimate}
_BI_1=O\Big((2N-1)!!(N+1)^2e^{\pi t/2}\sigma^{-1}t^{\sigma-N}(x+n+1)^{-\sigma}\Big).
\end{equation}

Applying integration by parts once to the original expression for $_BI_1$ gives
\begin{multline*}
_BI_1=-\Big[e^{-(x+n+1)z+it\log z}\Big(\tfrac{1}{x+n+1-\tfrac{it}{z}}\Big)D_N(z;x+n+1;\sigma,t)\Big]_{-i\eta}^0 \\ +\int_{-i\eta}^0e^{-(x+n+1)z+it\log z}D_{N+1}(z;x+n+1;\sigma,t)\,\mathrm{d}z.
\end{multline*}
Using the results of Lemma \ref{GL-DN-estimate} for the first half of this expression, and equation (\ref{GB-I1-estimate}) (with $N$ replaced by $N+1$, so that our $N\geq2$ assumption becomes only $N\geq1$) for the second half, we find:
\begin{align*}
\begin{split}
_BI_1&=O\bigg(\tfrac{e^{\pi t/2}z}{(x+n+1)z-it}(2N-1)!!(N+1)^2|z|^{\sigma-1}\max\big((x+n+1)|z|,t\big)^{-N}\bigg|_{z=-i\eta}\bigg) \\ &\hspace{5cm}+O\Big((2N+1)!!(N+2)^2e^{\pi t/2}\sigma^{-1}t^{\sigma-N-1}(x+n+1)^{-\sigma}\Big)
\end{split} \\
\begin{split}
&=O\Big((2N+1)!!(N+1)e^{\pi t/2}\eta^{\sigma}\Big(\tfrac{\max((x+n+1)\eta,t)^{-N}}{(x+n+1)\eta+t}\Big)\Big) \\ &\hspace{5cm}+O\Big((2N+1)!!(N+1)^2e^{\pi t/2}\sigma^{-1}t^{\sigma-N-1}(x+n+1)^{-\sigma}\Big).
\end{split}
\end{align*}
If $(x+n+1)\eta\leq t$, the above yields
\begin{align*}
_BI_1&=O\Big((2N+1)!!(N+1)e^{\pi t/2}\Big[\eta^{\sigma}\cdot\tfrac{t^{-N}}{t}+(N+1)\sigma^{-1}t^{\sigma-N-1}(x+n+1)^{-\sigma}\Big]\Big) \\
&=O\Big((2N+1)!!(N+1)e^{\pi t/2}\Big[\big(\tfrac{t}{x+n+1}\big)^{\sigma}t^{-N-1}+(N+1)\sigma^{-1}t^{\sigma-N-1}(x+n+1)^{-\sigma}\Big]\Big) \\
&=O\big((2N+1)!!(N+1)^2e^{\pi t/2}\sigma^{-1}t^{\sigma-N-1}(x+n+1)^{-\sigma}\big).
\end{align*}
If $(x+n+1)\eta>t$, we find
\begin{align*}
_BI_1&=O\Big((2N+1)!!(N+1)e^{\pi t/2}\Big[\eta^{\sigma}\cdot\tfrac{(x+n+1)^{-N}\eta^{-N}}{(x+n+1)\eta}+(N+1)\sigma^{-1}t^{\sigma-N-1}(x+n+1)^{-\sigma}\Big]\Big) \\
&=O\Big((2N+1)!!(N+1)e^{\pi t/2}\Big[\big(\tfrac{t}{x+n+1}\big)^{\sigma-N-1}(x+n+1)^{-N-1}+(N+1)\sigma^{-1}t^{\sigma-N-1}(x+n+1)^{-\sigma}\Big]\Big) \\
&=O\big((2N+1)!!(N+1)^2e^{\pi t/2}\sigma^{-1}t^{\sigma-N-1}(x+n+1)^{-\sigma}\big).
\end{align*}
Thus, in both cases, we have the required estimate for $_BI_1$, and the $O$-bound is uniform in all parameters.
\end{proof}

\begin{lem}
\label{GB-intDN-estimate2}
For $n<x-(1+\epsilon)\tfrac{t}{\eta}$, the remainder term in (\ref{GB-summand-IbP2}) can be uniformly approximated by \[O\big((2N+1)!!(N+1)e^{\pi t/2}\eta^{\sigma-N-1}(x-n)^{-N-1}\epsilon^{-2N-2}(1+\epsilon)^{2N+2}\big).\] For $x-(1-\epsilon)\tfrac{t}{\eta}<n\leq x$, the remainder term in (\ref{GB-summand-IbP3}) can be uniformly approximated by \[O\big((2N+1)!!(N+1)^2e^{\pi t/2}\sigma^{-1}\eta^{\sigma}t^{-N-1}\epsilon^{-2N-2}\big).\]
\end{lem}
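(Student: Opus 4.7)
The plan is to follow the two-step argument of Lemma \ref{GB-intDN-estimate1}: obtain a preliminary bound by substituting an appropriate estimate of $D_N$ into the integral, then sharpen it by one integration by parts. The two parts of the lemma mirror the two cases of Lemma \ref{GU-DN-estimates}; the only twist is that the first slot of $D_N$ is now $\xi = n-x \leq 0$, but since $D_N$ in (\ref{DN-series}) depends on $\xi$ only through $\xi^{N-b}$ and through the modulus $|\xi z - it|$, taking absolute values reduces everything to $|\xi| = x-n$ and the bounds of Lemma \ref{GU-DN-estimates} apply verbatim.

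For Part 1 the path is $z \in -i[\eta,\infty)$. The hypothesis $n < x-(1+\epsilon)t/\eta$ gives $(x-n)|z| \geq (x-n)\eta > (1+\epsilon)t$, and hence $|\xi z - it| > \tfrac{\epsilon}{1+\epsilon}|\xi z|$, placing us in Case 2 of Lemma \ref{GU-DN-estimates} with $D_N = O((2N-1)!!(N+1)^2|z|^{\sigma-N-1}(x-n)^{-N}\epsilon^{-2N}(1+\epsilon)^{2N})$. Since $|e^{(x-n)z+it\log z}| = e^{\pi t/2}$ on the negative imaginary axis, integrating $|z|^{\sigma-N-1}$ over $[\eta,\infty)$ (convergent for $N\geq 2$) yields a preliminary estimate with leading factor $e^{\pi t/2}\eta^{\sigma-N}(x-n)^{-N}\epsilon^{-2N}(1+\epsilon)^{2N}$. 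One integration by parts against $e^{(x-n)z+it\log z}$ then produces a boundary term at $z=-i\eta$ controlled by the extra factor $\eta/((x-n)\eta-t) < (1+\epsilon)/(\epsilon(x-n))$, plus a new integral of $D_{N+1}$ bounded by the same preliminary argument with $N\to N+1$; both contributions fall within the stated target.

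For Part 2 the path is $z \in -i[0,\eta]$. The hypothesis $x-(1-\epsilon)t/\eta < n \leq x$ gives $(x-n)|z| \leq (x-n)\eta < (1-\epsilon)t$, and hence $|\xi z - it| > \epsilon t$, placing us in Case 1 of Lemma \ref{GU-DN-estimates} with $D_N = O((2N-1)!!(N+1)^2|z|^{\sigma-1}t^{-N}\epsilon^{-2N})$. Integrating $|z|^{\sigma-1}$ over $[0,\eta]$ contributes a factor $\eta^\sigma/\sigma$, producing the preliminary estimate $O((2N-1)!!(N+1)^2 e^{\pi t/2}\sigma^{-1}\eta^\sigma t^{-N}\epsilon^{-2N})$. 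One integration by parts then yields a boundary term, vanishing at $z=0$ because $D_N\cdot z = O(|z|^\sigma)$ with $\sigma > 0$ and controlled at $z=-i\eta$ by $\eta/(t-(x-n)\eta) < 1/\epsilon$, together with a new integral of $D_{N+1}$ bounded by the preliminary estimate with $N\to N+1$; this delivers the target factor $\sigma^{-1}\eta^\sigma t^{-N-1}\epsilon^{-2N-2}$.

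The main bookkeeping challenge is ensuring, after integration by parts, that the boundary term is absorbed by (or at worst comparable to) the $D_{N+1}$-integral term. This requires tacitly using $\epsilon \leq 1$, so that the $\epsilon^{-(2N+1)}(1+\epsilon)^{2N+1}$ arising at the boundary is dominated by the $\epsilon^{-(2N+2)}(1+\epsilon)^{2N+2}$ incurred in the $D_{N+1}$-integral. A secondary subtlety is verifying that the boundary term vanishes at $z = -i\infty$ in Part 1 (which follows from $D_N\cdot z/((n-x)z-it) = O(|z|^{\sigma-N-1})$ decaying for $N\geq 1$) and at $z = 0$ in Part 2 (from positivity of $\sigma$).
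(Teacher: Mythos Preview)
Your approach is essentially identical to the paper's: the same preliminary $D_N$ bounds (borrowed from the two cases of Lemma~\ref{GU-DN-estimates}, with $\xi=n-x$ playing the role of $x+n$), the same one-step integration by parts, and the same absorption of the boundary term into the $D_{N+1}$-integral. One small slip: in Part~2 the boundary factor at $z=-i\eta$ satisfies $\eta/\bigl(t-(x-n)\eta\bigr)<\eta/(\epsilon t)$, not $1/\epsilon$; with this correction the boundary term contributes $O(\eta^{\sigma}t^{-N-1}\epsilon^{-2N-1})$, which is indeed absorbed by the target as you intend.
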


\begin{proof}
Let $_BI_2$ and $_BI_3$ denote the two expressions we need to estimate, i.e.
\begin{equation*}
_BI_2\coloneqq\int_{-i\infty}^{-i\eta}e^{(x-n)z+it\log z}D_N(z;n-x;\sigma,t)\,\mathrm{d}z
\end{equation*}
and
\begin{equation*}
_BI_3\coloneqq\int_{0}^{-i\eta}e^{(x-n)z+it\log z}D_N(z;n-x;\sigma,t)\,\mathrm{d}z.
\end{equation*}

For $_BI_2$, we can use the same argument as in Case 2 of Lemma \ref{GU-DN-estimates} to estimate $D_N$. Starting from the expression (\ref{DN-series}) and using the fact that $|(n-x)z|\geq(x-n)\eta>(1+\epsilon)t$, we find
\begin{align*}
D_N(z;n-x;\sigma,t)&=O\bigg((2N-1)!!|z|^{\sigma-1}\sum_{b=0}^N\sum_{c=0}^N\Big|\tfrac{t}{(n-x)z}\Big|^b|\sigma|^c\Big|\tfrac{(n-x)z}{((n-x)z-it)^2}\Big|^N\bigg) \\
&=O\bigg((2N-1)!!(N+1)^2|z|^{\sigma-1}\Big|\tfrac{(n-x)z}{((n-x)z-it)^2}\Big|^N\bigg).
\end{align*}
Since $(n-x)z$ is positive imaginary with modulus at least $(1+\epsilon)t$, it follows that \[\big|(n-x)z-it\big|>\tfrac{\epsilon}{1+\epsilon}\big|(n-x)z\big|,\] and thus
\begin{align*}
D_N&=O\Big((2N-1)!!(N+1)^2|z|^{\sigma-1}\big|(n-x)z\big|^{-N}\big(\tfrac{\epsilon}{1+\epsilon}\big)^{-2N}\Big) \\
&=O\big((2N-1)!!(N+1)^2|z|^{\sigma-N-1}(x-n)^{-N}\epsilon^{-2N}(1+\epsilon)^{2N}\big).
\end{align*}
Therefore,
\begin{align*}
_BI_2&=O\bigg(\int_{-i\infty}^{-i\eta}e^{(x-n)z+it\log z}(2N-1)!!(N+1)^2|z|^{\sigma-N-1}(x-n)^{-N}\epsilon^{-2N}(1+\epsilon)^{2N}\,\mathrm{d}z\bigg) \\
&=O\big((2N-1)!!(N+1)e^{\pi t/2}\eta^{\sigma-N}(x-n)^{-N}\epsilon^{-2N}(1+\epsilon)^{2N}\big),
\end{align*}
where the $O$-bound is uniform provided $N\geq2$.

Applying integration by parts once to the original expression for $_BI_2$ gives
\begin{multline*}
_BI_2=-\Big[e^{(x-n)z+it\log z}\Big(\tfrac{1}{n-x-\tfrac{it}{z}}\Big)D_N(z;n-x;\sigma,t)\Big]_{-i\infty}^{-i\eta} \\ +\int_{-i\infty}^{-i\eta}e^{(x-n)z+it\log z}D_{N+1}(z;n-x;\sigma,t)\,\mathrm{d}z.
\end{multline*}
Using the estimates we just derived for $D_N$ and (with $N$ replaced by $N+1$, so that our $N\geq2$ assumption becomes only $N\geq1$) for $_BI_2$, this becomes
\begin{align*}
\begin{split}
_BI_2&=O\bigg(\tfrac{e^{\pi t/2}z}{(n-x)z-it}(2N-1)!!(N+1)^2|z|^{\sigma-N-1}(x-n)^{-N}\epsilon^{-2N}(1+\epsilon)^{2N}\Big|_{z=-i\eta}\bigg) \\ &\hspace{4cm}+O\big((2N+1)!!(N+2)e^{\pi t/2}\eta^{\sigma-N-1}(x-n)^{-N-1}\epsilon^{-2N-2}(1+\epsilon)^{2N+2}\big)
\end{split} \\
\begin{split}
&=O\bigg(\tfrac{e^{\pi t/2}\eta}{\tfrac{\epsilon}{1+\epsilon}(x-n)\eta}(2N+1)!!(N+1)\eta^{\sigma-N-1}(x-n)^{-N}\epsilon^{-2N}(1+\epsilon)^{2N}\bigg) \\ &\hspace{4cm}+O\big((2N+1)!!(N+1)e^{\pi t/2}\eta^{\sigma-N-1}(x-n)^{-N-1}\epsilon^{-2N-2}(1+\epsilon)^{2N+2}\big)
\end{split} \\
&=O\Big((2N+1)!!(N+1)e^{\pi t/2}\eta^{\sigma-N-1}(x-n)^{-N-1}\epsilon^{-2N-1}(1+\epsilon)^{2N+1}\big[1+\tfrac{1+\epsilon}{\epsilon}\big]\Big),
\end{align*}
where we have again used the inequality $(x-n)\eta-t>\tfrac{\epsilon}{1+\epsilon}(x-n)\eta$. This gives us the required expression for $_BI_2$.

For $_BI_3$, we can use the same argument as in Case 1 of Lemma \ref{GU-DN-estimates} to estimate $D_N$. Starting from the expression (\ref{DN-series}) and using the fact that $|(n-x)z|\leq(x-n)\eta<(1-\epsilon)t$, we find
\begin{align*}
D_N(z;n-x;\sigma,t)&=O\Bigg((2N-1)!!|z|^{\sigma-1}\sum_{b=0}^N\sum_{c=0}^N\Big|\tfrac{(n-x)z}{t}\Big|^{N-b}|\sigma|^c\Big|\tfrac{t}{((n-x)z-it)^2}\Big|^N\Bigg) \\
&=O\bigg((2N-1)!!(N+1)^2|z|^{\sigma-1}\Big|\tfrac{t}{((n-x)z-it)^2}\Big|^N\bigg).
\end{align*}
Since $(n-x)z$ is non-negative imaginary with modulus at most $(1-\epsilon)t$, it follows that $\big|(n-x)z-it\big|>\epsilon t$, and thus
\begin{align*}
D_N&=O\Big((2N-1)!!(N+1)^2|z|^{\sigma-1}t^N(\epsilon t)^{-2N}\Big) \\
&=O\big((2N-1)!!(N+1)^2|z|^{\sigma-1}t^{-N}\epsilon^{-2N}\big).
\end{align*}
Therefore,
\begin{align*}
_BI_3&=O\bigg(\int_{0}^{-i\eta}e^{(x-n)z+it\log z}(2N-1)!!(N+1)^2|z|^{\sigma-1}t^{-N}\epsilon^{-2N}\,\mathrm{d}z\bigg) \\
&=O\Big((2N-1)!!(N+1)^2e^{\pi t/2}\big(\tfrac{\eta^{\sigma}}{\sigma}\big)t^{-N}\epsilon^{-2N}\Big).
\end{align*}
Applying integration by parts once to the original expression for $_BI_3$ gives
\begin{multline*}
_BI_3=-\Big[e^{(x-n)z+it\log z}\Big(\tfrac{1}{n-x-\tfrac{it}{z}}\Big)D_N(z;n-x;\sigma,t)\Big]_{0}^{-i\eta} \\ +\int_{0}^{-i\eta}e^{(x-n)z+it\log z}D_{N+1}(z;n-x;\sigma,t)\,\mathrm{d}z.
\end{multline*}
Using the estimates we just derived for $D_N$ and (with $N$ replaced by $N+1$) for $_BI_3$, this becomes
\begin{align*}
\begin{split}
_BI_3&=O\bigg(\tfrac{e^{\pi t/2}z}{(n-x)z-it}(2N-1)!!(N+1)^2|z|^{\sigma-1}t^{-N}\epsilon^{-2N}\Big|_{z=-i\eta}\bigg) \\ &\hspace{6cm}+O\big((2N+1)!!(N+2)^2e^{\pi t/2}\sigma^{-1}\eta^{\sigma}t^{-N-1}\epsilon^{-2N-2}\big)
\end{split} \\
&=O\bigg(\tfrac{e^{\pi t/2}\eta}{\epsilon t}(2N+1)!!(N+1)\eta^{\sigma-1}t^{-N}\epsilon^{-2N}\bigg)+O\big((2N+1)!!(N+1)^2e^{\pi t/2}\sigma^{-1}\eta^{\sigma}t^{-N-1}\epsilon^{-2N-2}\big) \\
&=O\big((2N+1)!!(N+1)^2e^{\pi t/2}\sigma^{-1}\eta^{\sigma}t^{-N-1}\epsilon^{-2N-2}\big),
\end{align*}
where we have used again the inequality $\big|(n-x)z-it\big|>\epsilon t$. This gives us the required expression for $_BI_3$.
\end{proof}

\begin{lem}
\label{GB-intDN-estimate3}
For $n>x$, the remainder term in (\ref{GB-summand-IbP4}) can be uniformly approximated by \[O\big((2N+1)!!(N+1)e^{\pi t/2}\eta^{\sigma-N-1}(n-x)^{\sigma-N-1}\big),\] and the remainder term in (\ref{GB-summand-IbP5}) can be uniformly approximated by \[O\big((2N+1)!!(N+1)^2e^{\pi t/2}\sigma^{-1}t^{\sigma-N-1}\big).\]
\end{lem}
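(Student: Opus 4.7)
The plan is to mimic closely the template established by Lemmas \ref{GB-intDN-estimate1} and \ref{GB-intDN-estimate2}. Denote the two remainder integrals by
\[ {}_BI_4\coloneqq\int_{-i\eta(n+1+x)}^{-i\eta(n-x)}e^{-w+it\log w}D_N(w;1;\sigma,t)\,\mathrm{d}w,\qquad {}_BI_5\coloneqq\int_{-i\eta(n-x)}^{0}e^{-w+it\log w}D_N(w;1;\sigma,t)\,\mathrm{d}w. \]
Both contours lie along the negative imaginary axis and both use $\xi=1$, so Lemma \ref{GL-DN-estimate} applies; I would combine its two cases into the single uniform bound $|D_N(w;1;\sigma,t)|=O\bigl((2N-1)!!(N+1)^2|w|^{\sigma-1}\max(|w|,t)^{-N}\bigr)$, together with the pointwise bound $|e^{-w+it\log w}|\le e^{\pi t/2}$ throughout. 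The general strategy, exactly as in the three previous lemmas of this section, is to derive a preliminary estimate for each ${}_BI_j$ valid for $N\ge 2$, and then apply one integration by parts to sharpen it to the stated form, now valid for $N\ge 1$.

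For ${}_BI_5$, parameterising $w=-iu$ with $u\in[0,\eta(n-x)]$ reduces the absolute-value estimate to the elementary integral $\int_0^{\eta(n-x)}u^{\sigma-1}\max(u,t)^{-N}\,\mathrm{d}u$. Splitting at $u=t$ in the subcase $\eta(n-x)>t$, this integral evaluates to $O(\sigma^{-1}t^{\sigma-N})$ in both subcases, producing the preliminary estimate ${}_BI_5=O\bigl((2N-1)!!(N+1)^2e^{\pi t/2}\sigma^{-1}t^{\sigma-N}\bigr)$ for $N\ge 2$. Integration by parts once transforms ${}_BI_5$ into a boundary term plus a residual integral containing $D_{N+1}$. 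The boundary term at $w=0$ vanishes because $\tfrac{w}{w-it}D_N(w;1;\sigma,t)\sim C\,w^\sigma\to 0$ there (using $\sigma>0$ to ensure a positive exponent), while at $w=-i\eta(n-x)$ the boundary is $O\bigl(e^{\pi t/2}t^{\sigma-N-1}\bigr)$ after a short case analysis on $\eta(n-x)\lessgtr t$. Invoking the preliminary estimate at order $N+1$ for the residual integral delivers the stated bound.

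For ${}_BI_4$, parameterising $w=-iu$ with $u\in[\eta(n-x),\eta(n+1+x)]$ reduces the absolute-value estimate to $\int_{\eta(n-x)}^{\eta(n+1+x)}u^{\sigma-1}\max(u,t)^{-N}\,\mathrm{d}u$. Three subcases occur according to whether the interval lies entirely above $t$, entirely below $t$, or straddles $t$; in each, the integral evaluates in closed form (with a split at $u=t$ when needed) and yields a preliminary bound valid for $N\ge 2$. One integration by parts then produces two boundary terms plus a residual integral with $D_{N+1}$. The dominant contribution is the boundary at $w=-i\eta(n-x)$, which in each subcase is bounded by $O\bigl((\eta(n-x))^{\sigma-N-1}\bigr)=O\bigl(\eta^{\sigma-N-1}(n-x)^{\sigma-N-1}\bigr)$ after estimating $|w/(w-it)|$ and $|D_N|$ separately in the two regimes $\eta(n-x)\lessgtr t$; the contribution at $w=-i\eta(n+1+x)$ is strictly smaller since $\sigma-N-1<0$ and $n+1+x>n-x$. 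Combining with the residual controlled via the preliminary bound at order $N+1$ gives the stated estimate.

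The main obstacle is the case analysis induced by the split $|w|\lessgtr t$ in the $D_N$ estimate, especially for ${}_BI_4$ where the integration interval may straddle $u=t$. Handling this cleanly requires retaining the unified form $\max(|w|,t)^{-N}$ throughout, splitting the $u$-integral at $u=t$ only when strictly necessary, and verifying case by case that the resulting estimates collapse into the target form. No conceptually new ideas are needed beyond those already employed for ${}_BI_1$, ${}_BI_2$, and ${}_BI_3$; the proof is mechanically analogous but demands careful bookkeeping, the slightly delicate step being the verification that for ${}_BI_4$ the boundary contribution at $w=-i\eta(n-x)$ indeed dominates the residual in each of the three subcases.
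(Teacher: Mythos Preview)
Your proposal is correct and follows essentially the same template as the paper's proof: preliminary estimate via Lemma~\ref{GL-DN-estimate}, then one integration by parts to sharpen. The treatment of ${}_BI_5$ matches the paper's almost exactly, including the two subcases $\eta(n-x)\lessgtr t$.

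For ${}_BI_4$, however, you are creating unnecessary work. The paper does \emph{not} retain the unified form $\max(|w|,t)^{-N}$ here; since the integration interval $[\eta(n-x),\eta(n+1+x)]$ is bounded away from $0$, the paper simply invokes (\ref{GL-DN-estimate1}) alone, i.e.\ $D_N(w;1;\sigma,t)=O\bigl((2N-1)!!(N+1)^2|w|^{\sigma-N-1}\bigr)$. The resulting integral $\int_{\eta(n-x)}^{\eta(n+1+x)}u^{\sigma-N-1}\,\mathrm{d}u$ evaluates directly to $O\bigl((N-\sigma)^{-1}\eta^{\sigma-N}(n-x)^{\sigma-N}\bigr)$ with no case split whatsoever, and the subsequent integration by parts is equally clean. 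Your three-subcase analysis would reach the same conclusion, but the ``main obstacle'' you identify is an artifact of carrying the $\max$ where it is not needed; the paper's route eliminates it entirely.
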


\begin{proof}
Let $_BI_4$ and $_BI_5$ denote the two expressions we need to estimate, i.e.
\begin{equation*}
_BI_4\coloneqq\int_{-i\eta(n+1+x)}^{-i\eta(n-x)}e^{-w+it\log w}D_N(w;1;\sigma,t)\,\mathrm{d}w
\end{equation*}
and
\begin{equation*}
_BI_5\coloneqq\int_{-i\eta(n-x)}^{0}e^{-w+it\log w}D_N(w;1;\sigma,t)\,\mathrm{d}w.
\end{equation*}

By the first half (\ref{GL-DN-estimate1}) of Lemma \ref{GL-DN-estimate}, we have
\begin{equation}
\label{GB-DN-estimate4}
D_N(w;1;\sigma,t)=O\big((2N-1)!!(N+1)^2|w|^{\sigma-N-1}\big)
\end{equation}
for $\mathrm{Im}(w)<0$, which holds here since we have assumed $n-x>0$. Thus,
\begin{align}
\nonumber _BI_4&=O\bigg(\int_{-i\eta(n+1+x)}^{-i\eta(n-x)}e^{-w+it\log w}(2N-1)!!(N+1)^2|w|^{\sigma-N-1}\,\mathrm{d}w\bigg) \\
\label{GB-I4-estimate}
&=O\Big((2N-1)!!(N+1)e^{\pi t/2}\eta^{\sigma-N}\big[(n-x)^{\sigma-N}-(n+1+x)^{\sigma-N}\big]\Big),
\end{align}
where the $O$-bound is uniform provided $N\geq2$. Applying integration by parts once to the original expression for $_BI_4$ gives
\begin{multline*}
_BI_4=-\Big[e^{-w+it\log w}\Big(\tfrac{1}{1-\tfrac{it}{w}}\Big)D_N(w;1;\sigma,t)\Big]_{-i\eta(n+1+x)}^{-i\eta(n-x)}+\int_{-i\eta(n+1+x)}^{-i\eta(n-x)}e^{-w+it\log w}D_{N+1}(w;1;\sigma,t)\,\mathrm{d}w.
\end{multline*}
Using equation (\ref{GB-DN-estimate4}) for the first half of this expression, and equation (\ref{GB-I4-estimate}) (with $N$ replaced by $N+1$, so that our $N\geq2$ assumption becomes only $N\geq1$) for the second half, we find:
\begin{align*}
\begin{split}
_BI_4&=O\Big(e^{\pi t/2}\big[(2N-1)!!(N+1)^2|w|^{\sigma-N-1}\big]_{-i\eta(n+1+x)}^{-i\eta(n-x)}\Big) \\ &\hspace{3cm}+O\Big((2N+1)!!(N+2)e^{\pi t/2}\eta^{\sigma-N-1}\big[(n-x)^{\sigma-N-1}-(n+1+x)^{\sigma-N-1}\big]\Big)
\end{split} \\
&=O\Big((2N+1)!!(N+1)e^{\pi t/2}\eta^{\sigma-N-1}\big[(n-x)^{\sigma-N-1}-(n+1+x)^{\sigma-N-1}\big]\Big),
\end{align*}
which gives the required expression for $_BI_4$.

$_BI_5$ is slightly harder to estimate, since we need to split into two separate cases. By Lemma \ref{GL-DN-estimate}, we have \[D_N(w;1;\sigma,t)=O\Big((2N-1)!!(N+1)^2|w|^{\sigma-1}\max\big(|w|,t\big)^{-N}\Big)\] for $\mathrm{Im}(w)<0$, which again holds here by assumption. Thus,
\begin{align*}
_BI_5&=O\bigg(\int_{-i\eta(n-x)}^{0}e^{-w+it\log w}(2N-1)!!(N+1)^2|w|^{\sigma-1}\max\big(|w|,t\big)^{-N}\,\mathrm{d}w\bigg) \\
&=O\bigg((2N-1)!!(N+1)^2e^{\pi t/2}\int_{\eta(n-x)}^{0}w^{\sigma-1}\max(w,t)^{-N}\,\mathrm{d}w\bigg).
\end{align*}

\textbf{Case 1:} $\boldsymbol{\eta(n-x)\leq t}.$

In this case,
\begin{align}
\nonumber _BI_5&=O\bigg((2N-1)!!(N+1)^2e^{\pi t/2}\int_{\eta(n-x)}^{0}w^{\sigma-1}t^{-N}\,\mathrm{d}w\bigg) \\
\label{GB-I5-estimate1}
&=O\Big((2N-1)!!(N+1)^2e^{\pi t/2}\sigma^{-1}\eta^{\sigma}t^{-N}(n-x)^{\sigma}\Big).
\end{align}
Applying integration by parts once to the original expression for $_BI_5$ gives
\begin{equation*}
_BI_5=\Big[e^{-w+it\log w}\Big(\tfrac{1}{1-\tfrac{it}{w}}\Big)D_N(w;1;\sigma,t)\Big]_{w=-i\eta(n-x)}+\int_{\eta(n-x)}^{0}e^{-w+it\log w}D_{N+1}(w;1;\sigma,t)\,\mathrm{d}w.
\end{equation*}
Using (\ref{GL-DN-estimate2}) from Lemma \ref{GL-DN-estimate} for the first half of this expression, and (\ref{GB-I5-estimate1}) (with $N$ replaced by $N+1$) for the second half, we find:
\begin{align*}
\begin{split}
_BI_5&=O\Big(e^{\pi t/2}\Big(\tfrac{\eta(n-x)}{t}\Big)(2N-1)!!(N+1)^2[\eta(n-x)]^{\sigma-1}t^{-N}\Big) \\ &\hspace{7cm}+O\Big((2N+1)!!(N+2)^2e^{\pi t/2}\sigma^{-1}\eta^{\sigma}t^{-N-1}(n-x)^{\sigma}\Big)
\end{split} \\
&=O\Big((2N+1)!!(N+1)^2e^{\pi t/2}\sigma^{-1}[\eta(n-x)]^{\sigma}t^{-N-1}\Big) \\
&=O\big((2N+1)!!(N+1)^2e^{\pi t/2}\sigma^{-1}t^{\sigma-N-1}\big).
\end{align*}

\textbf{Case 2:} $\boldsymbol{\eta(n-x)>t}.$

In this case,
\begin{align}
\nonumber _BI_5&=O\bigg((2N-1)!!(N+1)^2e^{\pi t/2}\bigg[\int_{\eta(n-x)}^{t}w^{\sigma-N-1}\,\mathrm{d}w+\int_t^0w^{\sigma-1}t^{-N}\,\mathrm{d}w\bigg]\bigg) \\
\nonumber &=O\Big((2N-1)!!(N+1)e^{\pi t/2}\Big[t^{\sigma-N}-\big(\eta(x-n)\big)^{\sigma-N}+(N+1)\sigma^{-1}t^{\sigma-N}\Big]\Big) \\
\label{GB-I5-estimate2}
&=O\Big((2N-1)!!(N+1)^2e^{\pi t/2}\sigma^{-1}t^{\sigma-N}\Big),
\end{align}
where the $O$-bound is uniform provided $N\geq2$. Applying integration by parts as before, and then using (\ref{GL-DN-estimate1}) from Lemma \ref{GL-DN-estimate} for the first half of the resulting expression and (\ref{GB-I5-estimate2}) (with $N$ replaced by $N+1$, so that our $N\geq2$ assumption becomes only $N\geq1$) for the second half, we find:
\begin{align*}
_BI_5&=O\Big(e^{\pi t/2}(2N-1)!!(N+1)^2[\eta(n-x)]^{\sigma-N-1}\Big)+O\Big((2N+1)!!(N+2)^2e^{\pi t/2}\sigma^{-1}t^{\sigma-N-1}\Big) \\
&=O\big((2N+1)!!(N+1)^2e^{\pi t/2}\sigma^{-1}t^{\sigma-N-1}\big).
\end{align*}

In both cases, we have the required estimate for $_BI_5$.
\end{proof}

\begin{lem}
\label{GB-estimate}
We have the following uniform estimate for $G_B$:
\begin{multline*}
G_B(\sigma,t;\eta;x)=\frac{e^{-i\pi s/2}}{(2\pi)^s}\sum_{n=0}^{\big\lfloor x-\tfrac{t}{\eta}\big\rfloor}\frac{\Gamma(s)}{(x-n)^s}-\frac{e^{i\pi\sigma/2}e^{it\log\eta}}{(2\pi)^s}\sum_{n=0}^{M}\sum_{j=0}^{N-1}e^{i(n-x)\eta}\bigg[\Big(\tfrac{1}{n-x-\frac{it}{z}}\cdot\tfrac{\mathrm{d}}{\mathrm{d}z}\Big)^j\Big(\tfrac{z^{\sigma-1}}{n-x-\frac{it}{z}}\Big)\bigg]_{z=-i\eta} \\
+\frac{e^{i\pi\sigma/2}e^{it\log\eta}}{(2\pi)^s}\sum_{n=0}^{M}\sum_{j=0}^{N-1}e^{i(x+n+1)\eta}\bigg[\Big(\tfrac{1}{x+n+1-\frac{it}{z}}\cdot\tfrac{\mathrm{d}}{\mathrm{d}z}\Big)^j\Big(\tfrac{z^{\sigma-1}}{x+n+1-\frac{it}{z}}\Big)\bigg]_{z=-i\eta} \\
+O\Big((2N+1)!!(N+1)^2\sigma^{-1}\min(t,\eta)^{\sigma-N-1}\big[x^{1-\sigma} \\ +(x-\lfloor x\rfloor)^{-N-1}\big(\tfrac{1+\epsilon}{\epsilon}\big)^{2N+2}+x^{-\sigma}\big(\lfloor x\rfloor-x+1\big)^{-N-1}\big]\Big),
\end{multline*}
where $M$ is a finite number depending only on $N$, $x$, and $\eta$.
\end{lem}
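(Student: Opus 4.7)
The plan is to start from equation (\ref{GB-prelemma}), which already decomposes $G_B$ according to the three sub-ranges of $n$ dictated by Assumption \ref{Assumption}. I first apply the Jordan-lemma evaluation (\ref{GB-explicit}) to the integral $-\int_{-i\infty}^{0}e^{(x-n)z}z^{s-1}\,\mathrm{d}z$ appearing in the first sub-range, so that $e^{i\pi s/2}\cdot e^{-i\pi s}=e^{-i\pi s/2}$ delivers the $\frac{e^{-i\pi s/2}}{(2\pi)^s}\sum_{n=0}^{\lfloor x-t/\eta\rfloor}\Gamma(s)(x-n)^{-s}$ term of the statement. For the remaining five families of integrals I substitute the IBP expansions (\ref{GB-summand-IbP1})--(\ref{GB-summand-IbP5}) and simplify the resulting boundary terms via $e^{i\pi s/2}=e^{i\pi\sigma/2}e^{-\pi t/2}$ and $e^{it\log(-i\eta)}=e^{it\log\eta}e^{\pi t/2}$. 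After cancellation of the $e^{\pm\pi t/2}$ factors, the boundary contributions from (\ref{GB-summand-IbP1}) produce a $+\sum_n e^{i(x+n+1)\eta}[\cdots]$ sum, while the minus signs in front of the $\sum_j$ blocks of (\ref{GB-summand-IbP2}) and (\ref{GB-summand-IbP3}) combine to yield a $-\sum_n e^{i(n-x)\eta}[\cdots]$ sum; these are precisely the two unified boundary sums displayed in the lemma.

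Next, I bound the five families of $D_N$-remainder integrals using Lemmas \ref{GB-intDN-estimate1}, \ref{GB-intDN-estimate2}, and \ref{GB-intDN-estimate3}, and sum the bounds over the corresponding ranges of $n$. Summing the Lemma \ref{GB-intDN-estimate1} estimate over $0\leq n\leq\lfloor x\rfloor$ gives $\sum_n(x+n+1)^{-\sigma}=O(x^{1-\sigma})$, producing the first piece of the error. Summing $_BI_2$ over $n<x-(1+\epsilon)t/\eta$ together with $_BI_3$ over the $O(t/\eta)$ values of $n$ in the middle range produces the $\bigl((1+\epsilon)/\epsilon\bigr)^{2N+2}$ piece, whose singular factor $(x-\lfloor x\rfloor)^{-N-1}$ is driven by the extreme admissible value of $(x-n)$, which by Assumption \ref{Assumption} occurs at $n=\lfloor x\rfloor$. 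For the third sub-range, the integrals $_BI_4$ and $_BI_5$ must be summed jointly, since each series over $n\geq\lceil x\rceil$ diverges separately; however, the telescoping combination $(n+1+x)^{-s}-(n-x)^{-s}=O\bigl(x(n-x)^{-\sigma-1}\bigr)$ makes the joint sum convergent and dominated by its $n=\lceil x\rceil$ term, yielding the $x^{-\sigma}(\lfloor x\rfloor-x+1)^{-N-1}$ piece of the error. The uniform prefactor $\min(t,\eta)^{\sigma-N-1}$ is obtained by using $t^{-N}\leq\eta^{-N}$ when $\eta\leq t$ and the reverse substitution on the complementary regime.

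Finally, the infinite $n$-tail in the boundary sums (arising only from the third sub-range) is truncated at $M(N,x,\eta)$ by the tail argument of Lemma \ref{GL-estimate}: the $D_j$ bounds yield $\sum_{n>M}\sum_{j=0}^{N-1}\eta^{\sigma-j-1}(x+n)^{-j-1}$, which is absorbable into the error once $M$ satisfies the direct analogues of (\ref{GL-M-condition1})--(\ref{GL-M-condition2}). The main obstacle is the third sub-range: because $\sum_{n>x}\int_{-i\eta}^{0}e^{-(n+1+x)z}z^{s-1}\,\mathrm{d}z$ and $\sum_{n>x}\int_{-i\eta}^{0}e^{(x-n)z}z^{s-1}\,\mathrm{d}z$ diverge individually, the subtractive cancellation in the summand of (\ref{GB-series}) must be propagated carefully through the substitutions $w=(n+1+x)z$ and $w=(n-x)z$ (as already done in deriving (\ref{GB-prelemma})) and then through both the IBP and the $D_N$ bounds. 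A secondary, purely bookkeeping, difficulty is verifying that after these substitutions the IBP boundary terms from $_BI_4$ and $_BI_5$ combine with those from the first two sub-ranges into exactly the two unified finite sums over $0\leq n\leq M$ in the statement, without leaving any residual range-dependent contribution.
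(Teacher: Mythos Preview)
Your proposal is correct and follows essentially the same route as the paper's proof: starting from (\ref{GB-prelemma}), applying (\ref{GB-explicit}) and the IBP expansions (\ref{GB-summand-IbP1})--(\ref{GB-summand-IbP5}), bounding the $D_N$-remainders via Lemmas \ref{GB-intDN-estimate1}--\ref{GB-intDN-estimate3}, re-substituting $w=(n\pm x)z$ in the third sub-range so that the boundary terms from $_BI_4,_BI_5$ merge with those of the first two sub-ranges (the paper records this as ``cancellation of certain terms originating from (\ref{GB-summand4}) and (\ref{GB-summand5})''), summing the error pieces over $n$, and truncating the boundary series at $M$. Two small imprecisions worth noting: the $(x-\lfloor x\rfloor)^{-N-1}$ factor comes simply from the worst-case upper limit $\lfloor x-t/\eta\rfloor=\lfloor x\rfloor$ in the $_BI_2$ sum, not from Assumption \ref{Assumption} per se; and the paper's truncation conditions (\ref{GB-M-condition1})--(\ref{GB-M-condition3}) include the extra requirement $M>x$ and use $n-x$ rather than $n$, so they are not quite literal copies of (\ref{GL-M-condition1})--(\ref{GL-M-condition2}).
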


\begin{proof}
We use Lemma \ref{GB-intDN-estimate1} to establish that for $n\leq x$, equation (\ref{GB-summand-IbP1}) becomes:
\begin{multline}
\label{GB-summand1}
\int_{-i\eta}^{0}e^{-(x+n+1)z}z^{s-1}\,\mathrm{d}z=e^{\pi t/2}e^{it\log\eta}\sum_{j=0}^{N-1}e^{i(x+n+1)\eta}\bigg[\Big(\tfrac{1}{x+n+1-\frac{it}{z}}\cdot\tfrac{\mathrm{d}}{\mathrm{d}z}\Big)^j\Big(\tfrac{z^{\sigma-1}}{x+n+1-\frac{it}{z}}\Big)\bigg]_{z=-i\eta} \\ +O\big((2N+1)!!(N+1)^2e^{\pi t/2}\sigma^{-1}t^{\sigma-N-1}(x+n+1)^{-\sigma}\big).
\end{multline}
We use Lemma \ref{GB-intDN-estimate2} to establish that for $n<x-(1+\epsilon)\tfrac{t}{\eta}$ and $x-(1-\epsilon)\tfrac{t}{\eta}<n\leq x$ respectively, equations (\ref{GB-summand-IbP2}) and (\ref{GB-summand-IbP3}) become:
\begin{multline}
\label{GB-summand2}
\int_{-i\infty}^{-i\eta}e^{(x-n)z}z^{s-1}\,\mathrm{d}z=-e^{\pi t/2}e^{it\log\eta}\sum_{j=0}^{N-1}e^{-i(x-n)\eta}\bigg[\Big(\tfrac{1}{n-x-\frac{it}{z}}\cdot\tfrac{\mathrm{d}}{\mathrm{d}z}\Big)^j\Big(\tfrac{z^{\sigma-1}}{n-x-\frac{it}{z}}\Big)\bigg]_{z=-i\eta} \\ +O\big((2N+1)!!(N+1)e^{\pi t/2}\eta^{\sigma-N-1}(x-n)^{-N-1}\epsilon^{-2N-2}(1+\epsilon)^{2N+2}\big)
\end{multline}
and
\begin{multline}
\label{GB-summand3}
\int_{0}^{-i\eta}e^{(x-n)z}z^{s-1}\,\mathrm{d}z=-e^{\pi t/2}e^{it\log\eta}\sum_{j=0}^{N-1}e^{-i(x-n)\eta}\Big(\tfrac{1}{n-x-\frac{it}{z}}\cdot\tfrac{\mathrm{d}}{\mathrm{d}z}\Big)^j\Big(\tfrac{z^{\sigma-1}}{n-x-\frac{it}{z}}\Big)\Bigg|_{z=-i\eta} \\ +O\big((2N+1)!!(N+1)^2e^{\pi t/2}\sigma^{-1}\eta^{\sigma}t^{-N-1}\epsilon^{-2N-2}\big).
\end{multline}
We use Lemma \ref{GB-intDN-estimate3} to establish that for $n>x$, equations (\ref{GB-summand-IbP4}) and (\ref{GB-summand-IbP5}) become:
\begin{multline*}
\int_{-i\eta(n+1+x)}^{-i\eta(n-x)}e^{-w}w^{s-1}\,\mathrm{d}w=e^{\pi t/2}e^{it\log\eta}\sum_{j=0}^{N-1}e^{i(n+1+x)\eta}(n+1+x)^{it}\bigg[\Big(\tfrac{1}{1-\frac{it}{w}}\cdot\tfrac{\mathrm{d}}{\mathrm{d}w}\Big)^j\Big(\tfrac{w^{\sigma-1}}{1-\frac{it}{w}}\Big)\bigg]_{w=-i\eta(n+1+x)} \\
-e^{\pi t/2}e^{it\log\eta}\sum_{j=0}^{N-1}e^{i(n-x)\eta}(n-x)^{it}\bigg[\Big(\tfrac{1}{1-\frac{it}{w}}\cdot\tfrac{\mathrm{d}}{\mathrm{d}w}\Big)^j\Big(\tfrac{w^{\sigma-1}}{1-\frac{it}{w}}\Big)\bigg]_{w=-i\eta(n-x)} \\
+O\big((2N+1)!!(N+1)e^{\pi t/2}\eta^{\sigma-N-1}(n-x)^{\sigma-N-1}\big)
\end{multline*}
and
\begin{multline*}
\int_{-i\eta(n-x)}^{0}e^{-w}w^{s-1}\,\mathrm{d}w=e^{\pi t/2}e^{it\log\eta}\sum_{j=0}^{N-1}e^{i(n-x)\eta}(n-x)^{it}\bigg[\Big(\tfrac{1}{1-\frac{it}{w}}\cdot\tfrac{\mathrm{d}}{\mathrm{d}w}\Big)^j\Big(\tfrac{w^{\sigma-1}}{1-\frac{it}{w}}\Big)\bigg]_{w=-i\eta(n-x)} \\
+O\big((2N+1)!!(N+1)^2e^{\pi t/2}\sigma^{-1}t^{\sigma-N-1}\big).
\end{multline*}
Now, re-substituting $w=(n+1+x)z$ or $w=(n-x)z$ as appropriate, the first of these two expressions becomes
\begin{align}
\begin{split}
\nonumber &e^{\pi t/2}e^{it\log\eta}\sum_{j=0}^{N-1}e^{i(n+1+x)\eta}(n+1+x)^{it}\bigg[\bigg(\tfrac{1/(n+1+x)}{1-\frac{it}{(n+1+x)z}}\cdot\tfrac{\mathrm{d}}{\mathrm{d}z}\bigg)^j\bigg(\tfrac{((n+1+x)z)^{\sigma-1}}{1-\frac{it}{(n+1+x)z}}\bigg)\bigg]_{z=-i\eta} \\
&\hspace{2cm}-e^{\pi t/2}e^{it\log\eta}\sum_{j=0}^{N-1}e^{i(n-x)\eta}(n-x)^{it}\bigg[\bigg(\tfrac{1/(n-x)}{1-\frac{it}{(n-x)z}}\cdot\tfrac{\mathrm{d}}{\mathrm{d}z}\bigg)^j\bigg(\tfrac{((n-x)z)^{\sigma-1}}{1-\frac{it}{(n-x)z}}\bigg)\bigg]_{z=-i\eta} \\
&\hspace{3.5cm}+O\big((2N+1)!!(N+1)e^{\pi t/2}\eta^{\sigma-N-1}(n-x)^{\sigma-N-1}\big)
\end{split} \\
\begin{split}
\label{GB-summand4}
=\;&e^{\pi t/2}e^{it\log\eta}(n+1+x)^s\sum_{j=0}^{N-1}e^{i(n+1+x)\eta}\bigg[\Big(\tfrac{1}{n+1+x-\frac{it}{z}}\cdot\tfrac{\mathrm{d}}{\mathrm{d}z}\Big)^j\Big(\tfrac{(z)^{\sigma-1}}{n+1+x-\frac{it}{z}}\Big)\bigg]_{z=-i\eta} \\
&\hspace{2cm}-e^{\pi t/2}e^{it\log\eta}(n-x)^s\sum_{j=0}^{N-1}e^{i(n-x)\eta}\bigg[\Big(\tfrac{1}{n-x-\frac{it}{z}}\cdot\tfrac{\mathrm{d}}{\mathrm{d}z}\Big)^j\Big(\tfrac{z^{\sigma-1}}{n-x-\frac{it}{z}}\Big)\bigg]_{z=-i\eta} \\
&\hspace{3cm}+O\big((2N+1)!!(N+1)e^{\pi t/2}\eta^{\sigma-N-1}(n-x)^{\sigma-N-1}\big).
\end{split}
\end{align}
Similarly, after re-substituting $w=(n-x)z$, the second expression becomes
\begin{multline}
\label{GB-summand5}
e^{\pi t/2}e^{it\log\eta}(n-x)^s\sum_{j=0}^{N-1}e^{i(n-x)\eta}\bigg[\Big(\tfrac{1}{n-x-\frac{it}{z}}\cdot\tfrac{\mathrm{d}}{\mathrm{d}z}\Big)^j\Big(\tfrac{z^{\sigma-1}}{n-x-\frac{it}{z}}\Big)\bigg]_{z=-i\eta} \\ +O\big((2N+1)!!(N+1)^2e^{\pi t/2}\sigma^{-1}t^{\sigma-N-1}\big).
\end{multline}

Substituting (\ref{GB-explicit}), (\ref{GB-summand1}), (\ref{GB-summand2}), (\ref{GB-summand3}), (\ref{GB-summand4}), (\ref{GB-summand5}) into the expression (\ref{GB-prelemma}) for $G_B$ and noting the cancellation of certain terms originating from (\ref{GB-summand4}) and (\ref{GB-summand5}), we find the following expression for $G_B$:
\begin{align*}
\begin{split}
G_B&=\frac{e^{-i\pi s/2}}{(2\pi)^s}\sum_{n=0}^{\big\lfloor x-\tfrac{t}{\eta}\big\rfloor}\frac{\Gamma(s)}{(x-n)^s} \\
&\hspace{2cm}+\frac{e^{i\pi\sigma/2}}{(2\pi)^s}\sum_{n=0}^{\lfloor x\rfloor}\Bigg(e^{it\log\eta}\sum_{j=0}^{N-1}e^{i(x+n+1)\eta}\bigg[\Big(\tfrac{1}{x+n+1-\frac{it}{z}}\cdot\tfrac{\mathrm{d}}{\mathrm{d}z}\Big)^j\Big(\tfrac{z^{\sigma-1}}{x+n+1-\frac{it}{z}}\Big)\bigg]_{z=-i\eta}\Bigg) \\
&\hspace{2cm}-\frac{e^{i\pi\sigma/2}}{(2\pi)^s}\sum_{n=0}^{\lfloor x\rfloor}\Bigg(e^{it\log\eta}\sum_{j=0}^{N-1}e^{-i(x-n)\eta}\bigg[\Big(\tfrac{1}{n-x-\frac{it}{z}}\cdot\tfrac{\mathrm{d}}{\mathrm{d}z}\Big)^j\Big(\tfrac{z^{\sigma-1}}{n-x-\frac{it}{z}}\Big)\bigg]_{z=-i\eta}\Bigg) \\
&\hspace{2cm}+\frac{e^{i\pi\sigma/2}}{(2\pi)^s}\sum_{n=\lfloor x\rfloor+1}^{\infty}\Bigg(e^{it\log\eta}\sum_{j=0}^{N-1}e^{i(n+1+x)\eta}\bigg[\Big(\tfrac{1}{n+1+x-\frac{it}{z}}\cdot\tfrac{\mathrm{d}}{\mathrm{d}z}\Big)^j\Big(\tfrac{(z)^{\sigma-1}}{n+1+x-\frac{it}{z}}\Big)\bigg]_{z=-i\eta}\Bigg) \\
&\hspace{2cm}-\frac{e^{i\pi\sigma/2}}{(2\pi)^s}\sum_{n=\lfloor x\rfloor+1}^{\infty}\Bigg(e^{it\log\eta}\sum_{j=0}^{N-1}e^{i(n-x)\eta}\bigg[\Big(\tfrac{1}{n-x-\frac{it}{z}}\cdot\tfrac{\mathrm{d}}{\mathrm{d}z}\Big)^j\Big(\tfrac{z^{\sigma-1}}{n-x-\frac{it}{z}}\Big)\bigg]_{z=-i\eta}\Bigg) \\
&\hspace{2cm}+O\bigg(\sum_{n=0}^{\lfloor x\rfloor}(2N+1)!!(N+1)^2\sigma^{-1}t^{\sigma-N-1}(x+n+1)^{-\sigma}\bigg) \\
&\hspace{2cm}+O\Bigg(\sum_{n=0}^{\big\lfloor x-\tfrac{t}{\eta}\big\rfloor}(2N+1)!!(N+1)\eta^{\sigma-N-1}(x-n)^{-N-1}\epsilon^{-2N-2}(1+\epsilon)^{2N+2}\Bigg) \\
&\hspace{2cm}+O\Bigg(\sum_{n=\big\lceil x-\tfrac{t}{\eta}\big\rceil}^{\lfloor x\rfloor}(2N+1)!!(N+1)^2\sigma^{-1}\eta^{\sigma}t^{-N-1}\epsilon^{-2N-2}\Bigg) \\
&\hspace{2cm}+O\bigg(\sum_{n=\lfloor x\rfloor+1}^{\infty}(n+1+x)^{-s}(2N+1)!!(N+1)\eta^{\sigma-N-1}(n-x)^{\sigma-N-1}\bigg) \\
&\hspace{2cm}+O\bigg(\sum_{n=\lfloor x\rfloor+1}^{\infty}\Big((n+1+x)^{-s}-(n-x)^{-s}\Big)(2N+1)!!(N+1)^2\sigma^{-1}t^{\sigma-N-1}\bigg).
\end{split}
\end{align*}
Let us consider each of the remainder terms in turn, with the $O$-bound in each case being uniform. First, \[\sum_{n=0}^{\lfloor x\rfloor}(x+n+1)^{-\sigma}=O\big(x^{1-\sigma}\big);\] this follows by splitting the series into sums from $\tfrac{x}{2^k}$ to $\tfrac{x}{2^{k-1}}$ for $1\leq k\leq\log_2x$. Second, \[\sum_{n=0}^{\big\lfloor x-\tfrac{t}{\eta}\big\rfloor}(x-n)^{-N-1}=O\big((x-\lfloor x\rfloor)^{-N-1}\big)+O(1).\] Third, if $\tfrac{t}{\eta}<1$, the sum \[\sum_{n=\big\lceil x-\tfrac{t}{\eta}\big\rceil}^{\lfloor x\rfloor}1\] is non-existent, while if $\tfrac{t}{\eta}\geq1$ it is $O\big(\tfrac{t}{\eta})$. Fourth, \[\sum_{n=\lfloor x\rfloor+1}^{\infty}(n+1+x)^{-s}(n-x)^{\sigma-N-1}=O\Big(x^{-\sigma}\big(\lfloor x\rfloor-x+1\big)^{\sigma-N-1}+1\Big).\] And finally,
\begin{align*}
\sum_{n=\lfloor x\rfloor+1}^{\infty}\Big((n+1+x)^{-s}-(n-x)^{-s}\Big)&=\sum_{n=\lfloor x\rfloor+1}^{\infty}n^{-\sigma}O\Big((1+\tfrac{1+x}{n})^{-\sigma}-(1-\tfrac{x}{n})^{-\sigma}\Big) \\
&=\sum_{n=\lfloor x\rfloor+1}^{\infty}\Big(O(n^{-\sigma-1})+O(n^{-\sigma-2})+\dots\Big) \\
&=O(1).
\end{align*}
Substituting all of the above estimates into the expression for $G_B$ gives:
\begin{align*}
\begin{split}
G_B&=\frac{e^{-i\pi s/2}}{(2\pi)^s}\sum_{n=0}^{\big\lfloor x-\tfrac{t}{\eta}\big\rfloor}\frac{\Gamma(s)}{(x-n)^s}-\frac{e^{i\pi\sigma/2}e^{it\log\eta}}{(2\pi)^s}\sum_{n=0}^{\infty}\sum_{j=0}^{N-1}e^{i(n-x)\eta}\bigg[\Big(\tfrac{1}{n-x-\frac{it}{z}}\cdot\tfrac{\mathrm{d}}{\mathrm{d}z}\Big)^j\Big(\tfrac{z^{\sigma-1}}{n-x-\frac{it}{z}}\Big)\bigg]_{z=-i\eta} \\
&\hspace{2cm}+\frac{e^{i\pi\sigma/2}e^{it\log\eta}}{(2\pi)^s}\sum_{n=0}^{\infty}\sum_{j=0}^{N-1}e^{i(x+n+1)\eta}\bigg[\Big(\tfrac{1}{x+n+1-\frac{it}{z}}\cdot\tfrac{\mathrm{d}}{\mathrm{d}z}\Big)^j\Big(\tfrac{z^{\sigma-1}}{x+n+1-\frac{it}{z}}\Big)\bigg]_{z=-i\eta} \\
&\hspace{2cm}+O\Big((2N+1)!!(N+1)^2\sigma^{-1}t^{\sigma-N-1}x^{1-\sigma}\Big) \\
&\hspace{2cm}+O\Big((2N+1)!!(N+1)\eta^{\sigma-N-1}(x-\lfloor x\rfloor)^{-N-1}\epsilon^{-2N-2}(1+\epsilon)^{2N+2}\Big) \\
&\hspace{2cm}+O\Big((2N+1)!!(N+1)^2\sigma^{-1}\eta^{\sigma-1}t^{-N}\epsilon^{-2N-2}\Big) \\
&\hspace{2cm}+O\Big((2N+1)!!(N+1)\eta^{\sigma-N-1}x^{-\sigma}\big(\lfloor x\rfloor-x+1\big)^{\sigma-N-1}\Big) \\
&\hspace{2cm}+O\Big((2N+1)!!(N+1)^2\sigma^{-1}t^{\sigma-N-1}\Big).
\end{split}
\end{align*}
Each of the five error terms can be approximated by $(2N+1)!!(N+1)^2\sigma^{-1}$ times either $t^{\sigma-N-1}$ or $\eta^{\sigma-N-1}$ times one of $x^{1-\sigma}$, $(x-\lfloor x\rfloor)^{-N-1}\epsilon^{-2N-2}(1+\epsilon)^{2N+2}$, and $x^{-\sigma}\big(\lfloor x\rfloor-x+1\big)^{\sigma-N-1}$. Thus, we finally get the required form of the error terms. It remains to prove that the infinite series over $n$ can be reduced to a finite one by finding an appropriate upper bound $M(N,x,\eta)$: in other words, to find $M$ large enough that the series
\begin{equation}
\label{GB-M-series1}
\sum_{n=M+1}^{\infty}\sum_{j=0}^{N-1}e^{i(n-x)\eta}\bigg[\Big(\tfrac{1}{n-x-\frac{it}{z}}\cdot\tfrac{\mathrm{d}}{\mathrm{d}z}\Big)^j\Big(\tfrac{z^{\sigma-1}}{n-x-\frac{it}{z}}\Big)\bigg]_{z=-i\eta}
\end{equation}
and
\begin{equation}
\label{GB-M-series2}
\sum_{n=M+1}^{\infty}\sum_{j=0}^{N-1}e^{i(x+n+1)\eta}\bigg[\Big(\tfrac{1}{x+n+1-\frac{it}{z}}\cdot\tfrac{\mathrm{d}}{\mathrm{d}z}\Big)^j\Big(\tfrac{z^{\sigma-1}}{x+n+1-\frac{it}{z}}\Big)\bigg]_{z=-i\eta}
\end{equation}
can be swallowed up by the existing remainder term.

We assume
\begin{equation}
\label{GB-M-condition1}
M>x,
\end{equation}
so that both series above can be estimated using the definition of $D_N$ and the result (\ref{GL-DN-estimate1}) from Lemma \ref{GL-DN-estimate}. The expressions (\ref{GB-M-series1}) and (\ref{GB-M-series2}) can be rewritten as
\begin{align*}
&{\color{white}=}\sum_{n=M+1}^{\infty}\sum_{j=0}^{N-1}\frac{e^{i(n-x)\eta}}{n-x+\frac{t}{\eta}}D_j(-i\eta;n-x;\sigma,t) \\
&=\sum_{n=M+1}^{\infty}\frac{e^{i(n-x)\eta}(-i\eta)^{\sigma-1}}{n-x+\frac{t}{\eta}}+\sum_{n=M+1}^{\infty}\sum_{j=1}^{N-1}O\bigg(\frac{1}{n-x+\frac{t}{\eta}}(2j-1)!!(j+1)^2\eta^{\sigma-j-1}(n-x)^{-j}\bigg) \\
&=O\bigg(\eta^{\sigma-1}\sum_{n=M+1}^{\infty}\frac{e^{in\eta}}{n-x}\bigg)+\sum_{j=1}^{N-1}O\bigg((2j-1)!!(j+1)^2\eta^{\sigma-j-1}\sum_{n=M+1}^{\infty}(n-x)^{-j-1}\bigg)
\end{align*}
and
\begin{align*}
&{\color{white}=}\sum_{n=M+1}^{\infty}\sum_{j=0}^{N-1}\frac{e^{i(x+n+1)\eta}}{x+n+1+\frac{t}{\eta}}D_j(-i\eta;x+n+1;\sigma,t) \\
&=\sum_{n=M+1}^{\infty}\frac{e^{i(x+n+1)\eta}(-i\eta)^{\sigma-1}}{x+n+1+\frac{t}{\eta}}+\sum_{n=M+1}^{\infty}\sum_{j=1}^{N-1}O\bigg(\frac{1}{x+n+1+\frac{t}{\eta}}(2j-1)!!(j+1)^2\eta^{\sigma-j-1}(x+n+1)^{-j}\bigg) \\
&=O\bigg(\eta^{\sigma-1}\sum_{n=M+1}^{\infty}\frac{e^{in\eta}}{x+n+1}\bigg)+\sum_{j=1}^{N-1}O\bigg((2j-1)!!(j+1)^2\eta^{\sigma-j-1}\sum_{n=M+1}^{\infty}(x+n+1)^{-j-1}\bigg).
\end{align*}
Both of these can be bounded by $O\big((2N-1)!!(N+1)^2\eta^{\sigma-N-1}\big)$ just as in Lemma \ref{GL-estimate}, provided that $M$ satisfies the conditions
\begin{equation}
\label{GB-M-condition2}
\sum_{n=M+1}^{\infty}e^{in\eta}(n-x)^{-1}\leq\eta^{-N}
\end{equation}
and
\begin{equation}
\label{GB-M-condition3}
\sum_{n=M+1}^{\infty}(n-x)^{-2}\leq\eta^{1-N}.
\end{equation}

Thus, for any $M$ satisfying the conditions (\ref{GB-M-condition1}) and (\ref{GB-M-condition2}) and (\ref{GB-M-condition3}), the remainder term from the tail of the $n$-series is swallowed up by the other remainder terms, and we have the desired result.
\end{proof}

\section{Final result}
\begin{thm}
\label{zeta-estimate}
The modified Hurwitz zeta function is given by the following finite asymptotic series:
\begin{multline}
\label{zeta-estimate-formula}
\zeta_1(x,s)=\sum_{n=1}^{\big\lfloor\tfrac{t}{\eta}-x\big\rfloor}\frac{1}{(x+n)^s}-\sum_{n=0}^{\big\lfloor x-\tfrac{t}{\eta}\big\rfloor}\frac{1}{(x-n)^s}+\chi(s)\Bigg(\sum_{m=1}^{\lfloor\eta/2\pi\rfloor}e^{-2\pi imx}m^{s-1} \\ +\frac{e^{-i\pi\sigma/2}e^{it\log\eta}}{(2\pi)^s}\sum_{n=1}^{M}\sum_{j=0}^{N-1}e^{-i(x+n)\eta}\bigg[\Big(\tfrac{1}{x+n-\frac{it}{z}}\cdot\tfrac{\mathrm{d}}{\mathrm{d}z}\Big)^j\Big(\tfrac{z^{\sigma-1}}{x+n-\frac{it}{z}}\Big)\bigg]_{z=i\eta} \\ +\frac{e^{i\pi\sigma/2}e^{it\log\eta}}{(2\pi)^s}\sum_{n=0}^{M}\sum_{j=0}^{N-1}e^{i(n-x)\eta}\bigg[\Big(\tfrac{1}{n-x-\frac{it}{z}}\cdot\tfrac{\mathrm{d}}{\mathrm{d}z}\Big)^j\Big(\tfrac{z^{\sigma-1}}{n-x-\frac{it}{z}}\Big)\bigg]_{z=-i\eta} \\ +O\Big((2N+1)!!(N+1)^2\sigma^{-1}\min(t,\eta)^{\sigma-N-1}x^{-\sigma}K_N(x)\big(\tfrac{1+\epsilon}{\epsilon}\big)^{2N+2}\Big)\Bigg),
\end{multline}
where $s=\sigma+it$, $0<\sigma\leq1$, $0<t<\infty$, $0<x<\infty$, $0<\eta<\infty$ satisfies Assumption \ref{Assumption} for some fixed $\epsilon>0$, $M$ is a natural number depending only on $N$, $x$, and $\eta$, the function $K_N(x)$ is defined by \[K_N(x)=\max\Big(x,(x-\lfloor x\rfloor)^{-N-1},(\lfloor x\rfloor-x+1)^{-N-1}\Big),\] and the $O$-constant is uniform in all variables.
\end{thm}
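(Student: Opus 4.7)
The plan is to substitute the asymptotic expansions of $G_L$, $G_U$, $G_B$ from Lemmas \ref{GL-estimate}, \ref{GU-estimate}, \ref{GB-estimate} into the exact identity $\zeta_1(x,s)=\chi(s)[\sum_m e^{-2\pi imx}m^{s-1}-G_B+G_L+G_U]$ of Theorem \ref{Hurwitz-eta}, and then to sort the resulting terms into three categories: explicit contributions involving $\Gamma(s)/(x\pm n)^s$, oscillatory boundary sums of $D_j$-type, and error terms. The leading sum $\sum_m e^{-2\pi imx}m^{s-1}$, still multiplied by $\chi(s)$, passes through unchanged.

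For the explicit $\Gamma(s)/(x+n)^s$ coming from $\chi(s)G_U$ and the explicit $\Gamma(s)/(x-n)^s$ coming from $-\chi(s)G_B$, I would use the identity $\chi(s)\Gamma(s)=(2\pi)^s/(2\cos(\pi s/2))$, which follows from Euler's reflection $\Gamma(s)\Gamma(1-s)=\pi/\sin(\pi s)$ together with the double-angle identity. This gives the prefactor $\chi(s)\Gamma(s)e^{-i\pi s/2}/(2\pi)^s=1/(1+e^{i\pi s})$, and since $|e^{i\pi s}|=e^{-\pi t}$ one has $1/(1+e^{i\pi s})=1+O(e^{-\pi t})$ for large $t$. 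Hence these two pieces contribute precisely $\sum_{n=1}^{\lfloor t/\eta-x\rfloor}(x+n)^{-s}-\sum_{n=0}^{\lfloor x-t/\eta\rfloor}(x-n)^{-s}$, modulo exponentially small corrections absorbed by the polynomial remainder $O(\min(t,\eta)^{\sigma-N-1})$. For the oscillatory part, the crucial observation is that the $(x+n)$-boundary series in $\chi(s)G_L$, of the form $+\frac{e^{i\pi\sigma/2}e^{it\log\eta}}{(2\pi)^s}\sum_n e^{i(x+n)\eta}[\cdots(x+n)\cdots]_{z=-i\eta}$, and the $(x+n+1)$-series inside $-\chi(s)G_B$ are identical after the reindexing $m=n+1$ apart from an overall sign; taking $M_L=M_B+1$ (with both truncations free to be chosen large enough to satisfy the lemmas' tail conditions) makes them cancel exactly. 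What remains are the $(x+n)$-series at $z=i\eta$ from $\chi(s)G_U$ and the $(n-x)$-series at $z=-i\eta$ from $-\chi(s)G_B$, which are precisely the two oscillatory sums displayed in the target formula.

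The last step is to combine the three error bounds. The $G_L$ and $G_U$ remainders are dominated by the $G_B$ remainder, whose three $x$-dependent constituents $t^{\sigma-N-1}x^{1-\sigma}$, $\eta^{\sigma-N-1}(x-\lfloor x\rfloor)^{-N-1}(\tfrac{1+\epsilon}{\epsilon})^{2N+2}$, and $\eta^{\sigma-N-1}x^{-\sigma}(\lfloor x\rfloor-x+1)^{-N-1}$ are all captured by the single expression $\min(t,\eta)^{\sigma-N-1}x^{-\sigma}K_N(x)(\tfrac{1+\epsilon}{\epsilon})^{2N+2}$ once $K_N(x)=\max(x,(x-\lfloor x\rfloor)^{-N-1},(\lfloor x\rfloor-x+1)^{-N-1})$ is introduced. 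The main obstacle is the careful bookkeeping needed to make the $G_L$-vs-$G_B$ cancellation exact: a single $M$, depending only on $N,x,\eta$, must be chosen that simultaneously meets the tail conditions of all three lemmas, with the index offsets aligned so that the sign-opposite $(x+n)$-oscillatory series cancel term-by-term. A secondary technical delicacy is verifying that the $O(e^{-\pi t})$ correction arising from $1/(1+e^{i\pi s})-1$ is genuinely dominated by $\min(t,\eta)^{\sigma-N-1}$ uniformly in the stated parameter range; this follows because $e^{-\pi t}$ decays faster than any negative power of $t$ or $\eta$ in the large-$t$ regime implicit in the asymptotic statement.
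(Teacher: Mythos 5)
Your proposal follows essentially the same route as the paper's proof: substitute the three lemmas into the identity of Theorem \ref{Hurwitz-eta}, cancel the $G_L$ oscillatory series against the $(x+n+1)$-series from $-G_B$ after reindexing, reduce the prefactor $\chi(s)\Gamma(s)e^{-i\pi s/2}(2\pi)^{-s}$ to $1+O(e^{-\pi t})$ via Euler's reflection formula, and absorb all remainders into the single $K_N(x)$ bound. Your explicit attention to the one-term index offset in the $G_L$/$G_B$ cancellation is in fact slightly more careful than the paper, which passes over it silently.
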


\begin{proof}
Substituting the results of Lemma \ref{GL-estimate}, Lemma \ref{GU-estimate}, and Lemma \ref{GB-estimate} into the identity (\ref{Hurwitz-eta-formula}), we find:
\begin{align*}
\begin{split}
\zeta_1(x,s)&=\chi(s)\Bigg(\sum_{m=1}^{\lfloor\eta/2\pi\rfloor}e^{-2\pi imx}m^{s-1}+\frac{e^{-i\pi s/2}}{(2\pi)^s}\sum_{n=1}^{\big\lfloor\tfrac{t}{\eta}-x\big\rfloor}\frac{\Gamma(s)}{(x+n)^s}-\frac{e^{-i\pi s/2}}{(2\pi)^s}\sum_{n=0}^{\big\lfloor x-\tfrac{t}{\eta}\big\rfloor}\frac{\Gamma(s)}{(x-n)^s} \\
&\hspace{2cm}+\frac{e^{i\pi\sigma/2}e^{it\log\eta}}{(2\pi)^s}\sum_{n=1}^{M}\sum_{j=0}^{N-1}e^{i(x+n)\eta}\Bigg[\Big(\tfrac{1}{x+n-\frac{it}{z}}\cdot\tfrac{\mathrm{d}}{\mathrm{d}z}\Big)^j\Big(\tfrac{z^{\sigma-1}}{x+n-\frac{it}{z}}\Big)\Bigg]_{z=-i\eta} \\
&\hspace{2cm}+\frac{e^{-i\pi\sigma/2}e^{it\log\eta}}{(2\pi)^s}\sum_{n=1}^{M}\sum_{j=0}^{N-1}e^{-i(x+n)\eta}\bigg[\Big(\tfrac{1}{x+n-\frac{it}{z}}\cdot\tfrac{\mathrm{d}}{\mathrm{d}z}\Big)^j\Big(\tfrac{z^{\sigma-1}}{x+n-\frac{it}{z}}\Big)\bigg]_{z=i\eta} \\
&\hspace{2cm}+\frac{e^{i\pi\sigma/2}e^{it\log\eta}}{(2\pi)^s}\sum_{n=0}^{M}\sum_{j=0}^{N-1}e^{i(n-x)\eta}\bigg[\Big(\tfrac{1}{n-x-\frac{it}{z}}\cdot\tfrac{\mathrm{d}}{\mathrm{d}z}\Big)^j\Big(\tfrac{z^{\sigma-1}}{n-x-\frac{it}{z}}\Big)\bigg]_{z=-i\eta} \\
&\hspace{2cm}-\frac{e^{i\pi\sigma/2}e^{it\log\eta}}{(2\pi)^s}\sum_{n=0}^{M}\sum_{j=0}^{N-1}e^{i(x+n+1)\eta}\bigg[\Big(\tfrac{1}{x+n+1-\frac{it}{z}}\cdot\tfrac{\mathrm{d}}{\mathrm{d}z}\Big)^j\Big(\tfrac{z^{\sigma-1}}{x+n+1-\frac{it}{z}}\Big)\bigg]_{z=-i\eta} \\
&\hspace{2cm}+O\Big((2N-1)!!(N+1)^2\eta^{\sigma-N-1}\Big) \\ 
&\hspace{2cm}+O\big((2N+1)!!(N+1)^2\sigma^{-1}\eta^{\sigma-N-1}\epsilon^{-2N-2}(1+\epsilon)^{2N+2}\big) \\
&\hspace{2cm}+O\Big((2N+1)!!(N+1)^2\sigma^{-1}\min(t,\eta)^{\sigma-N-1}\big[x^{1-\sigma} \\
&\hspace{5cm}+(x-\lfloor x\rfloor)^{-N-1}\big(\tfrac{1+\epsilon}{\epsilon}\big)^{2N+2}+x^{\sigma}\big(\lfloor x\rfloor-x+1\big)^{-N-1}\big]\Big)\Bigg)
\end{split} \\
\begin{split}
&=\chi(s)\Bigg(\sum_{m=1}^{\lfloor\eta/2\pi\rfloor}e^{-2\pi imx}m^{s-1}+\frac{e^{-i\pi s/2}}{(2\pi)^s}\sum_{n=1}^{\big\lfloor\tfrac{t}{\eta}-x\big\rfloor}\frac{\Gamma(s)}{(x+n)^s}-\frac{e^{-i\pi s/2}}{(2\pi)^s}\sum_{n=0}^{\big\lfloor x-\tfrac{t}{\eta}\big\rfloor}\frac{\Gamma(s)}{(x-n)^s} \\
&\hspace{2cm}+\frac{e^{-i\pi\sigma/2}e^{it\log\eta}}{(2\pi)^s}\sum_{n=1}^{M}\sum_{j=0}^{N-1}e^{-i(x+n)\eta}\bigg[\Big(\tfrac{1}{x+n-\frac{it}{z}}\cdot\tfrac{\mathrm{d}}{\mathrm{d}z}\Big)^j\Big(\tfrac{z^{\sigma-1}}{x+n-\frac{it}{z}}\Big)\bigg]_{z=i\eta} \\
&\hspace{2cm}+\frac{e^{i\pi\sigma/2}e^{it\log\eta}}{(2\pi)^s}\sum_{n=0}^{M}\sum_{j=0}^{N-1}e^{i(n-x)\eta}\bigg[\Big(\tfrac{1}{n-x-\frac{it}{z}}\cdot\tfrac{\mathrm{d}}{\mathrm{d}z}\Big)^j\Big(\tfrac{z^{\sigma-1}}{n-x-\frac{it}{z}}\Big)\bigg]_{z=-i\eta} \\
&\hspace{2cm}+O\Big((2N+1)!!(N+1)^2\sigma^{-1}\min(t,\eta)^{\sigma-N-1}\big[x^{1-\sigma} \\
&\hspace{5cm}+(x-\lfloor x\rfloor)^{-N-1}\big(\tfrac{1+\epsilon}{\epsilon}\big)^{2N+2}+x^{-\sigma}\big(\lfloor x\rfloor-x+1\big)^{-N-1}\big]\Big)\Bigg).
\end{split}
\end{align*}

The second and third series in this expression can be simplified by using (\ref{chi}) together with Euler's reflection formula $\Gamma(1-s)\Gamma(s)=\frac{\pi}{\sin(\pi s)}\;,\;s\in\mathbb{C}\backslash\mathbb{Z},$ to obtain:
\begin{align*}
\chi(s)\tfrac{e^{-i\pi s/2}}{(2\pi)^s}\Gamma(s)&=\tfrac{(2\pi)^s}{\pi}\Gamma(1-s)\sin\big(\tfrac{\pi s}{2}\big)\tfrac{e^{-i\pi s/2}}{(2\pi)^s}\Gamma(s)=\tfrac{e^{-i\pi s/2}}{\pi}\sin\big(\tfrac{\pi s}{2}\big)\tfrac{\pi}{\sin(\pi s)} \\
&=\frac{e^{-i\pi s/2}}{2\cos\big(\tfrac{\pi s}{2}\big)}=\frac{e^{-i\pi s/2}}{e^{i\pi s/2}+e^{-i\pi s/2}}=\frac{1}{1+e^{i\pi\sigma}e^{-\pi t}}=1+O\big(e^{-\pi t}\big).
\end{align*}
Since exponential decay in $t$ is negligible in the large-$t$ asymptotics considered here, we can therefore rewrite the above formula for $\zeta_1(x,s)$ as follows:
\begin{multline*}
\zeta_1(x,s)=\sum_{n=1}^{\big\lfloor\tfrac{t}{\eta}-x\big\rfloor}\frac{1}{(x+n)^s}-\sum_{n=0}^{\big\lfloor x-\tfrac{t}{\eta}\big\rfloor}\frac{1}{(x-n)^s}+\chi(s)\Bigg(\sum_{m=1}^{\lfloor\eta/2\pi\rfloor}e^{-2\pi imx}m^{s-1} \\
+\frac{e^{-i\pi\sigma/2}e^{it\log\eta}}{(2\pi)^s}\sum_{n=1}^{M}\sum_{j=0}^{N-1}e^{-i(x+n)\eta}\bigg[\Big(\tfrac{1}{x+n-\frac{it}{z}}\cdot\tfrac{\mathrm{d}}{\mathrm{d}z}\Big)^j\Big(\tfrac{z^{\sigma-1}}{x+n-\frac{it}{z}}\Big)\bigg]_{z=i\eta} \\
+\frac{e^{i\pi\sigma/2}e^{it\log\eta}}{(2\pi)^s}\sum_{n=0}^{M}\sum_{j=0}^{N-1}e^{i(n-x)\eta}\bigg[\Big(\tfrac{1}{n-x-\frac{it}{z}}\cdot\tfrac{\mathrm{d}}{\mathrm{d}z}\Big)^j\Big(\tfrac{z^{\sigma-1}}{n-x-\frac{it}{z}}\Big)\bigg]_{z=-i\eta} \\
+O\Big((2N+1)!!(N+1)^2\sigma^{-1}\min(t,\eta)^{\sigma-N-1}\big[x^{1-\sigma} \\
+(x-\lfloor x\rfloor)^{-N-1}\big(\tfrac{1+\epsilon}{\epsilon}\big)^{2N+2}+x^{-\sigma}\big(\lfloor x\rfloor-x+1\big)^{-N-1}\big]\Big)\Bigg).
\end{multline*}
Then the required result (\ref{zeta-estimate-formula}) follows. The dependence of $M$ on $N$, $x$, and $\eta$ is given by the equations (\ref{GB-M-condition1}), (\ref{GB-M-condition2}), and (\ref{GB-M-condition3}), since the previous conditions (\ref{GL-M-condition1}) and (\ref{GL-M-condition2}) are implied by these.
\end{proof}

Note that (\ref{zeta-estimate-formula}) certainly describes a valid asymptotic series -- each term in the series over $j$ being smaller than the last, and the remainder term smaller than all of them -- precisely because the result is valid for all $N$. The remainder term is of an order in $t$ which decreases as $N$ increases, and reducing $N$ is equivalent to removing terms from the end of the series, so the estimate for the remainder term gives us the order of each term in the series over $j$.

\begin{coroll}
With all notation and assumptions as in Theorem \ref{zeta-estimate}, the leading-order asymptotics for the Hurwitz zeta function can be expressed by the formulae below.

\textbf{Case 1:} if $\eta>\frac{t}{x}$, then
\begin{equation}
\label{zeta-1st-order1}
\zeta_1(x,s)\sim-\sum_{n=0}^{\big\lfloor x-\tfrac{t}{\eta}\big\rfloor}\frac{1}{(x-n)^s}+\chi(s)\sum_{m=1}^{\lfloor\eta/2\pi\rfloor}e^{-2\pi imx}m^{s-1}.
\end{equation}

\textbf{Case 2:} if $\eta<\frac{t}{x+1}$, then
\begin{equation}
\label{zeta-1st-order2}
\zeta_1(x,s)\sim\sum_{n=1}^{\big\lfloor\tfrac{t}{\eta}-x\big\rfloor}\frac{1}{(x+n)^s}+\chi(s)\sum_{m=1}^{\lfloor\eta/2\pi\rfloor}e^{-2\pi imx}m^{s-1}.
\end{equation}

\textbf{Case 3:} if $\frac{t}{x+1}<\eta<\frac{t}{x}$, then
\begin{equation}
\label{zeta-1st-order3}
\zeta_1(x,s)\sim\chi(s)\sum_{m=1}^{\lfloor\eta/2\pi\rfloor}e^{-2\pi imx}m^{s-1}.
\end{equation}
\end{coroll}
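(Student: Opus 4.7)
The plan is to derive this corollary as an immediate specialisation of Theorem \ref{zeta-estimate} to the smallest admissible value $N=1$. With that choice each inner double sum over $(n,j)$ in (\ref{zeta-estimate-formula}) collapses to its $j=0$ contribution, and the quantitative remainder becomes $O\bigl(\chi(s)\min(t,\eta)^{\sigma-2}x^{-\sigma}K_1(x)(\tfrac{1+\epsilon}{\epsilon})^{4}\bigr)$, which is a concrete object I can compare against the retained main terms.

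I would then carry out the floor-function case split on whether the two ``direct'' sums in (\ref{zeta-estimate-formula}) are non-empty. The sum $\sum_{n=1}^{\lfloor t/\eta-x\rfloor}(x+n)^{-s}$ survives precisely when $\lfloor t/\eta-x\rfloor\geq 1$, equivalently $\eta\leq t/(x+1)$; while $\sum_{n=0}^{\lfloor x-t/\eta\rfloor}(x-n)^{-s}$ survives precisely when $\lfloor x-t/\eta\rfloor\geq 0$, equivalently $\eta\geq t/x$. Since $t/(x+1)<t/x$ these two conditions are mutually exclusive, and together with Assumption \ref{Assumption} (which forbids $\eta$ being close to $t/(x+n)$ for any integer $n$) they produce precisely the three regimes listed in the corollary: Case 1 retains only the $(x-n)^{-s}$ sum, Case 2 retains only the $(x+n)^{-s}$ sum, and Case 3 drops both.

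The analytical heart of the argument is to show that inside the $\chi(s)$ prefactor, the two $j=0$ sums and the remainder term are of strictly lower order than the retained $\sum_{m=1}^{\lfloor\eta/2\pi\rfloor}e^{-2\pi imx}m^{s-1}$. The $j=0$ bracketed quantities evaluate explicitly to $(i\eta)^{\sigma-1}/(x+n-t/\eta)$ and $(-i\eta)^{\sigma-1}/(n-x+t/\eta)$; by Assumption \ref{Assumption} each denominator is bounded below by $\epsilon t/\eta$ (applied with $n\mapsto -n$ in the second case), so every summand has size $O\bigl(\eta^{\sigma-1}/(x+n)\bigr)$. Partial summation against the oscillatory weights $e^{\mp i(x+n)\eta}$ then yields $j=0$ totals of order $O(\eta^{\sigma-1})$, a factor $\eta$ below the typical magnitude $O(\eta^{\sigma}/\sigma)$ of the retained $m$-series, while the remainder is smaller still by a further factor $\min(t,\eta)^{-1}$. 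Absorbing both into the relation $\sim$ yields the three displayed formulae.

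The main obstacle will be making the $O(\eta^{\sigma}/\sigma)$ heuristic for the retained $m$-series genuinely sharp, since that sum is itself oscillatory and its true magnitude can drop for particular rational $x$. The cleanest way to finesse this is to interpret $\sim$ in the corollary as ``equal up to the explicit quantitative remainder supplied by Theorem \ref{zeta-estimate} at $N=1$, together with the $O(\eta^{\sigma-1})$ correction from the $j=0$ sums''; with that reading the proof collapses to the floor-function bookkeeping of the second paragraph, and the only genuinely analytical input is the oscillatory bound on the $j=0$ sums, which is routine given Assumption \ref{Assumption}.
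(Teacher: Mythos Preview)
Your second paragraph is exactly the paper's proof: the authors simply observe that $\eta>t/x$ forces $t/\eta-x<0$ so the first sum in (\ref{zeta-estimate-formula}) is empty, $\eta<t/(x+1)$ forces $x-t/\eta<-1$ so the second sum is empty, and the intermediate range kills both. That is the entire argument they give.

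Everything else in your proposal --- setting $N=1$, bounding the $j=0$ contributions via Assumption \ref{Assumption} and partial summation, and comparing magnitudes against the $m$-series --- goes beyond what the paper does. The authors do not attempt to justify ``$\sim$'' as a genuine asymptotic dominance relation; they use it only in the loose sense you arrive at in your final paragraph, namely ``drop the two double sums and the $O$-term from (\ref{zeta-estimate-formula}) and record what remains''. The remark immediately following Theorem \ref{zeta-estimate} already argues informally that the $j$-series terms decrease in order, and the corollary simply inherits that. So your instinct in the last paragraph is the right one: with that reading of $\sim$, the proof \emph{is} just the floor-function bookkeeping, and your extra analytical work, while reasonable, is not needed (and, as you note, would run into genuine difficulties if one insisted on a sharp lower bound for the oscillatory $m$-series).

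One small correction to your bookkeeping: you write that the first sum survives when $\eta\le t/(x+1)$ and the second when $\eta\ge t/x$, but equality in either is excluded by Assumption \ref{Assumption} (indeed the paper points this out just after the corollary), so the three cases are exhaustive with strict inequalities throughout.
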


\begin{proof}
If $\eta>\frac{t}{x}$, then $\frac{t}{\eta}-x<0$, thus the first sum in (\ref{zeta-estimate-formula}) vanishes.

If $\eta<\frac{t}{x+1}$, then $x-\frac{t}{\eta}<-1$, thus the second sum in (\ref{zeta-estimate-formula}) vanishes.

If $\frac{t}{x+1}<\eta<\frac{t}{x}$, then $\frac{t}{\eta}-x<1$ and $x-\frac{t}{\eta}<0$, thus the first and second sums in (\ref{zeta-estimate-formula}) both vanish.
\end{proof}

By Assumption \ref{Assumption}, $\eta$ cannot be equal to either $\frac{t}{x}$ or $\frac{t}{x+1}$. Thus, the three cases above cover \textit{all} possibilities for $\eta$.

\begin{remark}
\normalfont Let us consider the particular value $x=0$, and compare the results of Theorem \ref{zeta-estimate} with the results for $\zeta(s)$ obtained in \cite{fokas-lenells}.

The three cases considered in the above corollary correspond to the cases into which the problem was separated in \cite{fokas-lenells}. Case 1 above is not possible when $x=0$, but Case 2 above now becomes the $\eta<t$ case of \cite{fokas-lenells}, and Case 3 above becomes the $\eta>t$ case of \cite{fokas-lenells}. The case $\eta=t$, which is considered in Theorem 3.2 of \cite{fokas-lenells}, is prohibited when $x=0$, under the terms of Assumption \ref{Assumption}.

When $\eta<t$, the formulae for $\zeta(s)$ obtained in Theorems 4.1 and 4.4 of \cite{fokas-lenells} were derived by entirely different methods from those used here, so it would be difficult to compare them with the result of our Theorem \ref{zeta-estimate} without reducing the relevant expressions all the way back to the original form of $\zeta_1(x,s)$. However, we can easily check that the leading-order terms of the expressions derived in \cite{fokas-lenells} are identical with those of Theorem \ref{zeta-estimate}: both the expressions proven in Theorems 4.1 and 4.4 of \cite{fokas-lenells} yield \[\zeta(s)\sim\sum_{n=1}^{\big\lfloor\tfrac{t}{\eta}\big\rfloor}\frac{1}{n^s}+\chi(s)\sum_{m=1}^{\lfloor\eta/2\pi\rfloor}m^{s-1},\] and this is precisely the expression obtained from (\ref{zeta-1st-order2}) under the assumption that $x=0$.

When $\eta>(1+\epsilon)t$ for some $\epsilon>0$, the formula for $\zeta(s)$ obtained in Theorem 3.1 of \cite{fokas-lenells} can be written as follows:
\begin{multline}
\label{zeta-estimate-case3}
\zeta(1-s)=\sum_{n=1}^{\big\lfloor\tfrac{\eta}{2\pi}\big\rfloor}n^{s-1}-\frac{1}{s}\Big(\frac{\eta}{2\pi}\Big)^s+\frac{e^{i\pi s/2}}{(2\pi)^s}\sum_{n=1}^{M}\sum_{j=0}^{N-1}e^{-nz+it\log z}\bigg[\Big(\tfrac{1}{n-\frac{it}{z}}\cdot\tfrac{\mathrm{d}}{\mathrm{d}z}\Big)^j\Big(\tfrac{z^{\sigma-1}}{n-\frac{it}{z}}\Big)\bigg]_{z=-i\eta} \\ +\frac{e^{-i\pi s/2}}{(2\pi)^s}\sum_{n=1}^{M}\sum_{j=0}^{N-1}e^{-nz+it\log z}\bigg[\Big(\tfrac{1}{n-\frac{it}{z}}\cdot\tfrac{\mathrm{d}}{\mathrm{d}z}\Big)^j\Big(\tfrac{z^{\sigma-1}}{n-\frac{it}{z}}\Big)\bigg]_{z=i\eta} \\ +O\Big((2N+1)!!N\big(\tfrac{1+\epsilon}{\epsilon}\big)^{2(N+1)}\eta^{\sigma-N-1}\Big).
\end{multline}
On the other hand, our result (\ref{zeta-estimate-formula}), with $x=0$ and $\eta>t$, becomes:
\begin{multline}
\label{zeta-estimate-formula-zero}
\zeta(s)=\chi(s)\Bigg(\sum_{m=1}^{\lfloor\eta/2\pi\rfloor}m^{s-1}+\frac{e^{-i\pi\sigma/2}e^{it\log\eta}}{(2\pi)^s}\sum_{n=1}^{M}\sum_{j=0}^{N-1}e^{-in\eta}\bigg[\Big(\tfrac{1}{n-\frac{it}{z}}\cdot\tfrac{\mathrm{d}}{\mathrm{d}z}\Big)^j\Big(\tfrac{z^{\sigma-1}}{n-\frac{it}{z}}\Big)\bigg]_{z=i\eta} \\ +\frac{e^{i\pi\sigma/2}e^{it\log\eta}}{(2\pi)^s}\sum_{n=0}^{M}\sum_{j=0}^{N-1}e^{in\eta}\bigg[\Big(\tfrac{1}{n-\frac{it}{z}}\cdot\tfrac{\mathrm{d}}{\mathrm{d}z}\Big)^j\Big(\tfrac{z^{\sigma-1}}{n-\frac{it}{z}}\Big)\bigg]_{z=-i\eta} \\ +O\Big((2N+1)!!(N+1)^2t^{\sigma-N-1}\big(\tfrac{1+\epsilon}{\epsilon}\big)^{2N+2}\Big)\Bigg).
\end{multline}
Note that our assumption $\eta>(1+\epsilon)t$ guarantees that Assumption \ref{Assumption} is valid, because $1>(1+\epsilon)\tfrac{t}{\eta}$ and $0<(1-\epsilon)\tfrac{t}{\eta}$. Using the well-known identity $\zeta(s)=\chi(s)\zeta(1-s)$, it is straightforward to check that equations (\ref{zeta-estimate-case3}) and (\ref{zeta-estimate-formula-zero}) are equivalent. In particular, the term \[-\frac{1}{s}\Big(\frac{\eta}{2\pi}\Big)^{s}\] in (\ref{zeta-estimate-case3}) comes from the $n=0$ part of the second series in (\ref{zeta-estimate-formula-zero}):
\begin{align*}
&\frac{e^{i\pi\sigma/2}e^{it\log\eta}}{(2\pi)^s}\sum_{j=0}^{N-1}\bigg[\Big(\tfrac{1}{-\frac{it}{z}}\cdot\tfrac{\mathrm{d}}{\mathrm{d}z}\Big)^j\Big(\tfrac{z^{\sigma-1}}{-\frac{it}{z}}\Big)\bigg]_{z=-i\eta} \\
=\,&\frac{e^{i\pi\sigma/2}\eta^{it}}{(2\pi)^s}\sum_{j=0}^{N-1}\bigg[(-it)^{-j-1}\Big(z\cdot\tfrac{\mathrm{d}}{\mathrm{d}z}\Big)^j\big(z^{\sigma}\big)\bigg]_{z=-i\eta} = \frac{e^{i\pi\sigma/2}\eta^{it}}{(2\pi)^s}\sum_{j=0}^{N-1}\bigg[(-it)^{-j-1}\sigma^jz^{\sigma}\bigg]_{z=-i\eta} \\
=\,&\frac{e^{i\pi\sigma/2}\eta^{it}}{(2\pi)^s}\sum_{j=0}^{N-1}\frac{1}{-it}\Big(\frac{\sigma}{-it}\Big)^je^{-i\pi\sigma/2}\eta^\sigma = \frac{\eta^s}{(2\pi)^s}\cdot\frac{1}{-it}\cdot\frac{1-\big(\tfrac{\sigma}{-it}\big)^N}{1-\tfrac{\sigma}{-it}} \\
=\,&-\frac{1}{s}\Big(\frac{\eta}{2\pi}\Big)^s\Big(1-\big(\tfrac{\sigma}{-it}\big)^N\Big),
\end{align*}
and the $t^{-N}$ part is absorbed by the error term.

Thus, we have shown that the results established here are consistent, as expected, with the existing results of \cite{fokas-lenells} for the Riemann zeta function.
\end{remark}

\section*{Acknowledgements}
Both authors gratefully acknowledge the support of the Engineering and Physical Sciences Research Council: the first author via a research student grant, and the second author via a senior fellowship.

%

\begin{thebibliography}{99}

\bibitem{andersson} 
\textsc{J. Andersson}, `Mean value properties of the Hurwitz zeta-function', \textit{Math. Scand.}, 71(2) (1992), pp. 295--300.

\bibitem{balasubramanian} 
\textsc{R. Balasubramanian}, `A note on Hurwitz's zeta-function', \textit{Ann. Acad. Sci. Fenn. Math.}, 4 (1979), pp. 41--44.

\bibitem{davenport} 
\textsc{H. Davenport}, \textit{Multiplicative Number Theory} (Markham, Chicago, 1967).

\bibitem{fokas-lenells} 
\textsc{A. S. Fokas and J. Lenells}, `On the asymptotics to all orders of the Riemann zeta function and of a two-parameter generalization of the Riemann zeta function', \textit{Mem. Amer. Math. Soc.} (submitted).

\bibitem{katsurada} 
\textsc{M. Katsurada and K. Matsumoto}, `Explicit formulas and asymptotic expansions for certain mean square of Hurwitz zeta-functions I', \textit{Math. Scand.}, 78(2) (1996), pp. 161--177.

\bibitem{mezo} 
\textsc{I. Mez\H{o} and A. Dil}, `Hyperharmonic series involving Hurwitz zeta function', \textit{J. Number Theory}, 130(2) (2010), pp. 360--369.

\bibitem{miller} 
\textsc{P. D. Miller}, \textit{Applied Asymptotic Analysis} (AMS, Rhode Island, 2006).

\bibitem{rane} 
\textsc{V. V. Rane}, \textit{Approximate functional equation for the product of functions and divisor problem}, Arxiv preprint, 2005, arXiv:math/0502126 [math.NT], accessed 5 Dec 2016.

\bibitem{siegel} 
\textsc{C. L. Siegel}, `\"{U}ber Riemanns Nachla\ss{ }zur analytischen Zahlentheorie', \textit{Quellen Studien zur Geschichte der Math. Astron. und Phys. Abt. B: Studien 2} (1932), pp. 45--80; reprinted in \textit{Gesammelte Abhandlungen}, Vol. 1., Springer-Verlag, Berlin, 1966.

\bibitem{titchmarsh} 
\textsc{E. C. Titchmarsh}, \textit{The Theory of the Riemann Zeta Function} [2nd ed.] (OUP, New York, 1986).

\bibitem{wang} 
\textsc{Y. Wang}, `On the $2k$-th mean value of Hurwitz zeta function', \textit{Acta Math. Hungar.}, 74(4) (1997), pp. 301--307.

\end{thebibliography}
\end{document}